\documentclass[11pt,a4paper, reqno]{amsart}

\usepackage[a4paper, left = 2cm,  hmargin={25mm,25mm},vmargin={25mm,25mm}]{geometry}
\usepackage[utf8]{inputenc}
\usepackage{geometry}
\usepackage[english]{babel}
\usepackage{graphicx}
\usepackage{color}
\usepackage{xcolor}
\usepackage{pdfpages}
\usepackage{amsbsy}
\usepackage{amssymb}
\usepackage{amsmath}
\usepackage{bm}
\usepackage{mathtools}
\usepackage{matlab-prettifier}
\usepackage{amsthm}
\usepackage{empheq}
\usepackage{tikz-cd}
\usepackage{amsfonts}
\usepackage{array}
\usepackage{hyperref}
\usepackage[OT2,T1]{fontenc}
\usepackage{csquotes}
\usepackage{mathabx}
\usepackage{bm}
\usepackage{bigints}

\numberwithin{equation}{section}

\newcommand\m[1]{\begin{pmatrix}#1\end{pmatrix}} 
\newcommand{\mf}{\mathfrak}

\DeclareSymbolFont{cyrletters}{OT2}{wncyr}{m}{n}
\DeclareMathSymbol{\Sha}{\mathalpha}{cyrletters}{"58}
\newcolumntype{P}[1]{>{\centering\arraybackslash}p{#1}}
\everymath{\displaystyle}

\newtheorem{theorem}{Theorem}[section]

\newtheorem{lemma}[theorem]{Lemma}

\newtheorem{defn}[theorem]{Definition}

\newtheorem{proposition}[theorem]{Proposition}

\newtheorem{remark}[theorem]{Remark}

\newtheorem*{notation}{Notation}
\newtheorem{corollary}[theorem]{Corollary}

\sloppy
\definecolor{lightgray}{gray}{0.5}
\setlength{\parindent}{0pt}

\usepackage[backend=biber, style=alphabetic,]{biblatex}
\addbibresource{reference.bib}
\begin{document}
\title{A Fourier-Jacobi Dirichlet series attached to modular forms of $\textup{SO}(2,4)$}
\author{Thanasis Bouganis and Rafail Psyroukis}
\address{Department of Mathematical Sciences\\ Durham University\\
South. Rd.\\ Durham, DH1 3LE, U.K..}
\email{athanasios.bouganis@durham.ac.uk, rafail.psyroukis@durham.ac.uk}
\subjclass[2020]{11F55, 11E88, 11E41, 11F66, 11M41}
\begin{abstract}
We consider a Dirichlet series $D(\bm{F},\bm{G};s)$ attached to two automorphic forms $\bm{F}$ and $\bm{G}$ of an orthogonal group of real signature $(2,4)$, involving their Fourier--Jacobi coefficients. When $\bm{F}$ is a Hecke eigenform and $\bm{G}$ a lift of a Jacobi-Poincaré series, our main result gives that $D(\bm{F},\bm{G};s)$ is equal to the standard $L$-function attached to $\bm{F}$, up to an explicit constant. To establish this, we use a correspondence between binary Hermitian forms and ideals of quaternion algebras, as established by Latimer, together with the fact that the even Clifford algebra of a three-dimensional definite quadratic space can be identified with a quaternion division algebra. Our work should be seen as a generalisation of a work of Kohnen and Skoruppa, whose result corresponds to the case of the orthogonal group of real signature $(2,3)$. 

\end{abstract}
\maketitle
\vspace{-1cm}
\section{Introduction}
Let $F$ and $G$ be two degree two Siegel cusp forms of even weight $k \geq 0$. In their groundbreaking work \cite{kohnen_skoruppa}, Kohnen and Skoruppa define and study the Dirichlet series 
\[
D_{F,G}(s):= \zeta(2s-2k+4) \sum_{N=1}^{\infty} \langle \phi_N, \psi_N \rangle N^{-s},
\]
where $\phi_N$ and $\psi_N$ are the $N$th Fourier--Jacobi coefficients of $F$ and $G$ respectively, and $\langle\,,\,\rangle$ is the Petersson inner product on Jacobi forms of weight $k$ and index $N$. The significance of their work derives mainly from the fact that if one further assumes that $F$ is a Hecke eigenform and $G$ is in the Maass space, then this Dirichlet series is proportional to $Z_F(s)$, the spinor $L$-function attached to $F$. Actually, as explained in their paper, it is sufficient to establish such a relation for the case where $G$ is taken to be the lift of a Jacobi-Poincaré series, as such lifts generate the Maass space in the symplectic setting. In particular, they show in \cite[Theorem 3]{kohnen_skoruppa},
\begin{equation}\label{kohnen-skoruppa-result}
    D_{F,\mathcal{P}_{k,D}}(s) = A(Q) Z_F(s), 
\end{equation}
where $Q$ denotes any quadratic form of discriminant $D$ representing $1$, and $A(Q)$ is 
the $Q$th Fourier coefficient of $F$. Here, $\mathcal{P}_{k,D}$ is a lift of a classical Poincare series associated to $k$ and $D$ (see \cite[page 552]{kohnen_skoruppa}). \newline

In this paper, we obtain a result analogous to Kohnen and Skoruppa's for modular forms of orthogonal groups of signature $(2,4)$. We will first describe our result and then explain how it fits within a broader framework, alongside Kohnen and Skoruppa's work.\newline

Let $V$ denote a two-dimensional quadratic space and let $\phi: V \times V \longrightarrow \mathbb{Q}$ denote a positive definite, symmetric, quadratic form. We assume that there is some $v\in V$ such that $\phi(v,v)=1$ (but see also Remark \ref{Condition on phi}). We then have that $(V,\phi)$ is isomorphic to $(K, N_{K/\mathbb{Q}})$, where $K$ is an imaginary quadratic field. For a fractional ideal $\mf{m}$ of $K$, we write $L_{\mf{m}}$ for the lattice in $V$ corresponding to the fractional ideal $\overline{\mf{m}}\, \mf{m}^{-1}$. It is known that $L_{\mf{m}}$ is a $\mathbb{Z}$-maximal lattice in $(V,\phi)$ and all $\mathbb{Z}$-maximal lattices of $(V,\phi)$ are of this form. We set $L:= L_{\mathcal{O}_K}$, and after fixing a $\mathbb{Z}$-basis for $L$ we may write $L = \mathbb{Z}^2$, $V=\mathbb{Q}^2$ and $S$ for a positive definite matrix such that
\begin{equation*}
    \phi(x,y) = \frac{1}{2}x^tSy,
\end{equation*}
for all $x,y \in \mathbb{Q}^2$. \newline

Consider now the quadratic spaces $(V_0,\phi_0)$ and $(V_1, \phi_1)$, which are represented by the matrices

\begin{equation*}
    S_0 := \m{&&1\\ &-S&\\1&&}, \,\,\, S_1 := \m{&&1\\ &-S_0&\\1&&},
\end{equation*}

with respect to the $\mathbb{Z}$-maximal lattices $L_0:=\mathbb{Z}^{4}$ and $L_1 := \mathbb{Z}^{6}$, respectively. We denote by $G_{\mathbb{Q}}^{*}$ and $G_\mathbb{Q}$ the special orthogonal groups associated with $S_0$ and $S_1$, respectively. \\

Let now $F$ denote an orthogonal cusp form of integer even weight $k \geq 0$ with respect to the subgroup $\Gamma(L_1):=\{\gamma \in G_{\mathbb{Q}} \mid \gamma L_1 = L_1\}$. If now $D:= \{g \in G_{\mathbb{A}} \mid gL_1 = L_1\}$, it is well-known that the class number of $G_{\mathbb{Q}}$ with respect to $D$ is one, and therefore $F$ corresponds uniquely to an automorphic cusp form of weight $k$ on $G_{\mathbb{A}}$, which we denote by $\bm{F}$.\\

Now, for each fractional ideal $\mf{m}$ of $K$, let $L_{\mf{m},1} := \mathbb{Z}^2 + L_{\mf{m}} + \mathbb{Z}^2$ denote the corresponding lattices in $V_1$. These are all $\mathbb{Z}$-maximal lattices in $V_1$, and because of the class number one property, we can pick isometries $\widetilde{h}_{\mf{m}} \in G_{\mathbb{Q}}$ such that $L_{\mf{m},1} = \widetilde{h}_{\mf{m}}L_1$. After picking these isometries in a particular way (see Lemma \ref{hm}), we use $\bm{F}$ to define modular forms $F_{\mf{m}}:= \bm{F}(\widetilde{h}_{\mf{m}};Z)$, which are then orthogonal modular forms with respect to $\Gamma(L_{\mf{m},1})$. Moreover, each $F_{\mf{m}}$ admits a Fourier-Jacobi expansion, and we write $\{\phi_{\mf{m},N}\}_{N=1}^{\infty}$ for these coefficients. In particular, they are Jacobi cusp forms of weight $k$ and lattice index $(L_{\mf{m}}, N\phi)$. \\

Furthermore, for each $\mf{m}$ and $D_{\mf{m}}\in \mathbb{Z}_{<0}$, we can define a Jacobi-Poincar\'{e} series $P_{\mf{m}} = P_{\mf{m}}(k,D)$ of weight $k$ and lattice index $L_{\mf{m}}$. We can then consider a lift of such Jacobi-Poincar\'{e} series, given by

\begin{equation*}
    \mathcal{P}_{\mf{m}}(\tau', z, \tau) := \sum_{N \geq 1}(V_{N}P_{\mf{m}})(\tau,z)e(N\tau'),
\end{equation*}
where $V_N$ is an index-raising operator acting on Jacobi forms of lattice index. This is, in fact, an element of the corresponding Maass space. Moreover, it can be shown that each such $\mathcal{P}_{\mf{m}}$ arises as $\bm{P}(\widetilde{h}_{\mf{m}}; Z)$, where $\bm{P}$ is the unique automorphic form corresponding to $\mathcal{P} := \mathcal{P}_{\mathcal{O}_K}$. \\

For any finite set of primes $\mathcal{Q}$, we then consider the Dirichlet series

\begin{equation*}
    D_{\mathcal{Q}}(F_{\mf{m}},\mathcal{P}_{\mf{m}};s) := \sum_{\substack{N=1\\(N,p)=1 \, \forall p \in \mathcal{Q}}}^{\infty} \langle \phi_{\mf{m},N}, V_N P_{\mf{m}}\rangle N^{-s},
\end{equation*}

where $\langle \ , \ \rangle$ denotes the inner product on the space of Fourier--Jacobi forms of weight $k$ and (lattice) index $(L_{\mf{m}}, N\phi)$. Moreover, after picking a set of representatives for $\textup{Cl}(K)$, we set 
\begin{equation*}
    D_{\mathcal{Q}}(\bm{F}, \bm{P};s):= \sum_{\mathfrak{m} \in \textup{Cl}(K)} \zeta_{\mf{m}, \mathcal{Q}}(s-k+3) D_{\mathcal{Q}}(F_{\mathfrak{m}},P_{\mathfrak{m}};s),
\end{equation*}
where $\zeta_{\mf{m}}(s)$ denotes the zeta function attached to the ideal class $\mathfrak{m}$ (see \eqref{ideal-class-zeta}). The main Theorem of this paper is the following.
\begin{theorem}\label{Theorem-Intro}
   
    Assume $K=\mathbb{Q}(\sqrt{-m})$ with $m$ square-free and $m \not \equiv 2 \pmod{4}$. Choose $\ell \in \mathbb{Z}_{\geq 1}$ so that it satisfies the conditions of Proposition \ref{stabilisers of lattices}, and set $\xi := (1,0,0, \ell)$. Assume $F$ is an eigenform for the corresponding Hecke algebra. 
    Then, for a finite set of primes $\mathcal{Q}$, we have

   \[
\zeta_{\mathcal{Q}}(2s-2k+4)D_{\mathcal{Q}}(\bm{F},\bm{P};s) = A(\xi)L_{\mathcal{Q}}(F;s-k+2),
\]

    where $L(F;s)$ is the standard $L$-function attached to $F$, and $A(\xi)$ is the $\xi^{th}$ Fourier coefficient of $F$.  In particular, if $A(\xi) \neq 0$, then $D_{\mathcal{Q}}(\bm{F},\bm{P};s)$ is proportional to $L_{\mathcal{Q}}(F;s-k+2)$.
\end{theorem}

We refer to Corollary \ref{Theorem for G} for a statement for the series $D(\bm{F},\bm{G};s)$ when $\bm{G}$ belongs to the space spanned by the forms $\bm{P}$ satisfying the conditions of the Theorem above. Here, $D(\bm{F},\bm{G};s)$ is defined similar to $D(\bm{F},\bm{P};s)$ (see \eqref{general-dirichlet-series}). \newline

We now discuss the main steps and the intuition behind our results. Our main goal has been to obtain an analogous result to that of Kohnen and Skoruppa. To that end, we consider a modular form $F$ for the full integral group $\Gamma(L_1)$, as well as a modular form $\mathcal{P}$, which is a lift of a Jacobi-Poincaré series, and hence an element of the corresponding Maass space (in fact, such lifts generate the Maass space). As we mentioned above, in the case $n=2$, the quadratic space $(V,\phi)$ is identified with $(K, N_{K/\mathbb{Q}})$, where $K$ is a quadratic imaginary field. Contrary to the result of Kohnen and Skoruppa, though, considering just the Dirichlet series $D(F, \mathcal{P};s)$ is not enough to obtain an Euler product. The reason for this is that, in general, there is not a single class of $\mathbb{Z}$-maximal lattices in $(V, \phi)$ (in the $n=1$ case, this is always true). It is, therefore, crucial to consider various Dirichlet series, parametrised by these classes. Denoting these classes by $L_{\mf{m}}$, where $\mf{m}$ runs over fractional ideals of $K$, our next step is to consider appropriate modular forms $F_{\mf{m}}$ corresponding to $L_{\mf{m}}$, which will be used to define these Dirichlet series. Because of the class number one property of the space $(V_1, \phi_1)$, $F$ can be identified with a unique automorphic form $\bm{F}$, which then can be used to produce the various $F_{\mf{m}}$. These are modular forms for the congruence subgroups of fixing the lattices $L_{\mf{m},1}:= \mathbb{Z}^{2} +  L_{\mf{m}} + \mathbb{Z}^2$ in $V_1$. By then considering the corresponding lifts $\mathcal{P}_{\mf{m}}$ of the Jacobi-Poincaré series for the lattices $L_{\mf{m}}$, we are able to define the Dirichlet series $D(F_{\mf{m}}, \mathcal{P}_{\mf{m}};s)$ for all $\mf{m}$. \\ 

The next key idea is that one can treat each $D(F_{\mf{m}}, \mathcal{P}_{\mf{m}};s)$ in an almost uniform manner.
For each $\mf{m}$, we consider the set of vectors $\xi$ in $V_0$, which satisfy $\phi_0[\xi]=\ell$ and $2\phi_0(\xi, L_{\mf{m},0}) = \mathbb{Z}$ for some $\ell > 0$. This set splits into a finite number of orbits under the action of $\Gamma(L_{\mf{m},0})$, and we write $\xi_{\mf{m},i}$ for a choice of representatives. Fixing $\xi := (1,0,0,\ell)$, we let $$H(\xi):=\{g \in G_{\mathbb{Q}}^{*} \mid g\xi = \xi\}.$$ This is a negative definite orthogonal group of rank $3$. Using Proposition \ref{first form of result}, we can rewrite $D(F_{\mf{m}}, \mathcal{P}_{\mf{m}};s)$ as an expression involving some zeta functions $\zeta(\xi_{\mf{{m},i}};s)$ counting the number of solutions of certain congruences, and Dirichlet series $D(F_{\mf{m}}, \xi_{\mf{m},i};s)$ involving the Fourier coefficients of $F$. Crucially, the $D(F_{\mf{m}}, \xi_{\mf{m},i};s)$ are independent of $\mf{m}$, and, thanks to work of Sugano \cite{sugano}, are related to the standard $L$-function attached to $F$, as well as to the standard $L$-functions of a basis of eigenforms $f_{j}$'s on $H(\xi)$ (see \eqref{expression3}). We note here that the work of Sugano plays in our paper the role that the classical work of Andrianov \cite{andrianov} plays in the work of Kohnen and Skoruppa.\\

Dealing with the $\zeta(\xi_{\mf{{m},i}};s)$ is at the heart of this paper. Our strategy is to exploit the isogeny between the unitary group $\textup{U}(2,2)$ and $\textup{SO}(2,4)$. In particular, we identify the $\xi_{i}$'s with classes of binary Hermitian forms of discriminant $\ell$, and hence relate the series $\zeta(\xi_{\mf{{m},i}};s)$ with $\mf{m}$-primitive representation numbers of the Hermitian form represented by $\xi_i$. Latimer, in \cite{Latimer1}, showed that binary hermitian forms correspond to specific classes of ideals in a certain quaternion algebra $B$, which allow us to identify each $\zeta(\xi_{\mf{{m},i}};s)$ as the partial zeta function associated with classes of ideals in $B$. We then identify such a quaternion algebra as an even Clifford algebra associated to $\textup{SO}(3)$, and interpret the $f_j$'s as lifts to modular forms on the corresponding Clifford group. The proof of our main result then follows using Shimura's work in \cite[Section 22.11]{shimura2004arithmetic}.\\

We now explain how the above results can be seen in a more general setting and how this paper is related to the recent work \cite{psyroukis_orthogonal} of one of the authors. It is a well-known fact that there is an accidental isogeny of the symplectic group $\textup{Sp}_2(\mathbb{R})$ and of $\textup{SO}(2,3)$, the special orthogonal group of signature $(2,3)$. Moreover, the spinor $L$-function attached to $F$ above can be identified with the standard $L$-function attached to a holomorphic modular form on an appropriate orthogonal group of real signature $(2,3)$, as for example is discussed in \cite[Section 25]{shimura2004arithmetic}. In fact, there are more accidental isogenies between classical and orthogonal groups. Examples are $\textup{U}(2,2)$ and $\textup{SO}(2,4)$, and $\textup{Sp}(2, \mathbb{H})$ and $\textup{SO}(2,6)$, where $\mathbb{H}$ denotes the skew field of real quaternions. Modular forms can be defined for orthogonal groups of any real signature $(2,n+2) $, $ n \geq 1$. 
It is therefore natural to ask whether the method of Kohnen and Skoruppa can be generalised to the orthogonal setting.\\

A first attempt at such a result was recently made by one of the authors in \cite{psyroukis_orthogonal}, although it differs in several fundamental respects from the present work. To start with, \cite{psyroukis_orthogonal} only considers a single Dirichlet series, corresponding to a modular form $F$ of full level. Moreover, the condition that the algebraic subgroup $H(\xi)$ has class number one must be imposed in order to establish a clear-cut Euler product. These conditions substantially restrict the range of admissible orthogonal groups.\\

This paper serves as a generalisation of the work \cite{psyroukis_orthogonal} for the case of signature $(2,4)$ in two crucial ways. First, we are able to remove the class number one assumption on the group $H(\xi)$ by exploiting the accidental isogeny between $\textup{U}(2,2)$ and $\textup{SO}(2,4)$. Moreover, we introduce--for the first time--the idea of considering multiple Dirichlet series, roughly corresponding to the number of classes in the genus of the definite group corresponding to $\phi$. This refinement is essential in order to be able to prove our result for number fields $K$ of any class number. We should mention here that most of the ideas discussed up until Section \ref{section-dirichlet-series} can be generalised for arbitrary $n \geq 1$. We expect that a similar approach, considering multiple Dirichlet series, should be the appropriate method to obtain a clear-cut Euler product in the general case.\\

Finally, our result generalises another classical result, namely that of Gritsenko \cite{gritsenko}. In particular, Gritsenko generalised the result of Kohnen and Skoruppa for the case of Hermitian modular forms of degree two over the imaginary quadratic field $K = \mathbb{Q}(i)$. Gritsenko’s approach, which relies on factorisation methods in parabolic Hecke rings, is very different from the techniques used in this paper. Indeed, our work is closer to the approach of Kohnen and Skoruppa \cite{kohnen_skoruppa}, in which the representation numbers of binary Hermitian forms play an analogue role to those of binary quadratic forms.
To this, we should add that it is not difficult to see that Gritsenko's method can be generalised to any number field $K$ of class number $1$. 
However, to the best of our knowledge, nothing is known, or at least published, for the case of a class number larger than one. In that sense, the present work may also be viewed as a generalisation of Gritsenko’s results to arbitrary number fields as in Theorem \ref{main theorem}, due to the accidental isogeny $\textup{U}(2,2)$ and $\textup{SO}(2,4)$.

\begin{notation}
    We denote the space of $m\times n$ matrices with coefficients in a ring $R$ with $\textup{M}_{m,n}(R)$. If $n=m$, we often use the notation $\textup{M}_{n}(R)$. By $1_n$, we denote the $n\times n$ identity matrix. For any matrix $M \in \textup{M}_{n}(R)$, we denote by $\det(M), \textup{ tr}(M)$ the determinant and trace of $M$ respectively. By $\textup{GL}_n(R)$, we denote the matrices in $M_{n}(R)$ with non-zero determinant and by $\textup{SL}_n(R)$ the matrices with determinant $1$. For any vector $v$, we denote by $v^{t}$ its transpose. We also use the bracket notation $A[B] := \overline{B}^{t}AB$ for suitably sized complex matrices $A,B$. By $\textup{diag}(A_1, A_2, \cdots, A_n)$, we will denote the block diagonal matrix with the matrices $A_1, A_2,\cdots, A_n$ in the diagonal blocks. For a complex number $z$, we denote by $e(z) := e^{2\pi i z}$. Finally, let $\zeta(s)$ denote the Riemann zeta function.
    
\end{notation}

\section{Preliminaries}\label{preliminaries}
Let $F$ denote the field $\mathbb{Q}$ or $\mathbb{Q}_p$ for any rational prime $p$. Let $V$ denote a finite-dimensional vector space over $F$ and let $\phi: V \times V \longrightarrow F$ be a non-degenerate symmetric bilinear form. We call $(V,\phi)$ a quadratic space. We put $\phi[x] := \phi(x,x)$ for all $x \in V$. We denote by $A(V,\phi)$ or simply by $A(V)$ the Clifford algebra associated to $(V,\phi)$. We define the canonical automorphism $x \longmapsto x'$ and the canonical involution $x \longmapsto x^{*}$ of $A(V)$ by the property $v = -v' = v^{*}$ for all $v \in V$. We then set
\begin{equation*}
    A^{+}(V) := \{\alpha \in A(V) \mid \alpha' = \alpha\},
\end{equation*}
\begin{equation*}
    A^{-}(V) := \{\alpha \in A(V) \mid \alpha' = -\alpha\}.
\end{equation*}
We also define the special orthogonal and Clifford group by
\begin{equation*}
    \textup{SO}^{\phi}(V) := \{\alpha \in \textup{GL}(V) \mid \phi[\alpha x] = \phi[x], \ \forall x \in V\},
\end{equation*}
\begin{equation*}
    G(V) :=\{\alpha \in A(V)^{\times} \mid \alpha V\alpha^{-1} = V\},
\end{equation*}
respectively, and put $G^{+}(V) := G(V) \cap A^{+}(V)$, the even Clifford group. We define the homomorphisms
\begin{align}\label{tau map}
    \begin{split}
        \tau : G^{+}(V) &\longrightarrow \textup{SO}^{\phi}(V)\\ 
        \alpha &\longmapsto \tau(\alpha)x := \alpha x\alpha^{-1}.
    \end{split}
\end{align}
and
\begin{align}\label{nu map}
    \begin{split}
        \nu : G^{+}(V) &\longrightarrow F^{\times}\\
        \alpha &\longmapsto \alpha \alpha^{*}.
    \end{split}
\end{align}
It is well known that $\tau$ is surjective with kernel $F^{\times}$. We also define the spinor norm 
\begin{align*}
    \sigma : \textup{SO}^{\phi}(V) \longrightarrow \mathbb{Q}^{\times}/ (\mathbb{Q}^{\times})^2
\end{align*}
as follows. If $\alpha \in \textup{SO}^{\phi}(V)$, we pick an element $\xi \in G^{+}(V)$ such that $\tau(\xi) = \alpha$ and denote by $\sigma(\alpha)$ the coset $\nu(\xi)(\mathbb{Q}^{\times})^2$, which is determined by $\alpha$.
\\

Let now $\mathfrak{g}$ denote a maximal order in $F$, i.e. $\mathfrak{g} = \mathbb{Z}$ or $\mathbb{Z}_p$, depending on $F = \mathbb{Q}$ or $\mathbb{Q}_p$. By a $\mathfrak{g}$-lattice $L$ in $V$, we mean a finitely generated $\mathfrak{g}$-module that spans $V$ over $F$. It is called $\mathfrak{g}$-integral if $\phi[x] \in \mathfrak{g}$ for all $x \in L$. We say $L$ is $\mathfrak{g}$-maximal if it is maximal among all integral $\mathfrak{g}$-lattices. We also define its dual lattice via
\begin{equation*}
    L^{*} := \{x \in V \mid 2\phi(x, y) \in \mathfrak{g}, \ \forall y \in L\}.
\end{equation*}
The level $q$ of $L$ is defined as the least positive integer such that $q \phi[x] \in \mathfrak{g}, \ \forall x \in L^{*}$. A vector $v \in L$ is called primitive if
    \begin{equation*}
        \forall k \in \mathbb{Z}, w \in L: (v = kw \implies k=\pm 1).
    \end{equation*}
We also define the integral orthogonal group via
\begin{equation*}
    \textup{SO}^{\phi}(L) := \{\alpha \in \textup{SO}^{\phi}(V) \mid \alpha L = L\}.
\end{equation*}
By $A(L)$ we denote the subring of $A(V)$ which is generated by $L$ and $\mathfrak{g}$. We also put $A^{+}(L) := A(L) \cap A^{+}(V)$. \\

For any $q \in \mathbb{Q}^{\times}$ and $\mathfrak{b}$ a fractional ideal of $\mathbb{Q}$, we define
\begin{equation}\label{shimura set}
    L[q, \mathfrak{b}] := \{x \in V \mid \phi[x] = q \textup{ and } \phi(x, L) = \mathfrak{b}\}. 
\end{equation}
Finally, given two lattices $L, M$ in $V$, we denote by $[L/M]$ the fractional ideal generated by $\det \alpha$ for all $\alpha \in \textup{GL}(V)$ such that $\alpha L \subset M$.
\section{Modular Forms on $\textup{SO}(2,n)$}\label{modular-forms-section}
Let now $V := \mathbb{Q}^{n}$ and $L := \mathbb{Z}^{n}$ with $n \geq 1$ (both identified with the standard basis). Assume $S$ is an even integral positive definite symmetric matrix of rank $n$. Here, even means $S[x]:=x^t Sx \in 2\mathbb{Z}$ for all $x \in L$. We define
\begin{equation*}
    S_0 := \m{&&1\\ &-S&\\1&&}, \ S_1 := \m{&&1\\&S_0&\\1&&}
\end{equation*}
of real signatures $(1,n+1)$ and $(2,n+2)$, respectively.

Let also $V_0:= \mathbb{Q}^{n+2}$ and $V_1 := \mathbb{Q}^{n+4}$ and consider the quadratic spaces $(V_0, \phi_0), \ (V_1, \phi_1)$, where
\begin{align*}
    \phi_i : V_i \times V_i &\longmapsto \mathbb{Q}\\
    (x,y) &\longmapsto \frac{1}{2}x^{t}S_iy,
\end{align*}
for $i= 0, 1$. \\

We then have that $\phi:=\phi_{0} \mid_{V\times V}$ is just $(x, y) \longmapsto -x^{t}Sy/2$, and we make the assumption that $L = \mathbb{Z}^{n}$ is a maximal $\mathbb{Z}$-lattice with respect to $\phi$. \\

From \cite[Lemma 6.3]{shimura2004arithmetic}, we then obtain that $L_0:= \mathbb{Z}^{n+2}$ is a $\mathbb{Z}$-maximal lattice in $V_0$.\\

If now $R$ is a  $\mathbb{Q}$-algebra, the corresponding orthogonal groups of $R$-rational points are given by 
\begin{equation*}
    G_R^{*} = \textup{SO}^{\phi_0}(V_0 \otimes R) := \{g \in \textup{SL}_{n+2}(R) \mid g^{t}S_0g = S_0\},
\end{equation*}
\begin{equation*}
    G_R = \textup{SO}^{\phi_1}(V_1 \otimes R):= \{g \in \textup{SL}_{n+4}(R) \mid g^{t}S_1g = S_1\}.
\end{equation*}

In particular, for each prime $p$, we will abbreviate by $G_p$ the group $G_{\mathbb{Q}_p}$. We will write $G_{\mathbb{A}}$ for the adelisation of $G$, $G_{\mathbb{R}}$ for the infinite part of $G_{\mathbb{A}}$ and $G_{\mathbb{A},f}$ for its finite part. We use analogous notation for $G^{*}$. \\

We now collect some standard facts about automorphic forms on $G$ (see for example \cite{sugano} or \cite{krieg_jacobi}). We let $\mathcal{H}_{S}$ denote one of the connected components of
\begin{equation*}
    \{Z \in V_0 \otimes_{\mathbb{Q}} \mathbb{C} \mid \phi_0[\textup{Im}Z] > 0\}.
\end{equation*}
In particular, if we let 
\begin{equation*}
    \mathcal{P}_{S} := \{y'=(y_1,y,y_2) \in \mathbb{R}^{n+2} \mid y_1 >0, \,\, \phi_0[y']>0\},
\end{equation*}
we choose
\begin{equation}\label{symmetric space}
    \mathcal{H}_S := \{z=x+iy \in V_0 \otimes_{\mathbb{R}} \mathbb{C} \mid y \in \mathcal{P}_S\}.
\end{equation}
For a matrix $g \in M_{n+4}(\mathbb{R})$, we write it as
\begin{equation*}
    g = \m{\alpha & a^t &\beta \\ b&A&c\\ \gamma&d^t&\delta},
\end{equation*}
with $A \in \textup{M}_{n+2}(\mathbb{R}), \alpha,\beta,\gamma,\delta \in \mathbb{R}$ and $a,b,c,d$ real column vectors. Now the map
\begin{equation}\label{action_orthogonal}
    Z \longmapsto g\langle Z\rangle = \frac{-\frac{1}{2}S_0[Z]b+AZ+c}{-\frac{1}{2}S_0[Z]\gamma+d^tZ+\delta}
\end{equation}
gives a well-defined transitive action of $G_{\mathbb{R}}^{+}$ on $\mathcal{H_{S}}$, where $G_{\mathbb{R}}^{+}$ denotes the connected component of the identity of $G_{\mathbb{R}}$. The denominator of the above expression is the factor of automorphy
\begin{equation*}
    j(g, Z) := -\frac{1}{2}S_0[Z]\gamma+d^tZ+\delta.
\end{equation*}
Let $Z_0$ denote any point of $\mathcal{H}_{S}$ with real part $0$ and denote by $D_{\mathbb{R}}$ its stabiliser in $G_{\mathbb{R}}^{+}$. Then, we have that $G_{\mathbb{R}}^{+}/D_{\mathbb{R}} \cong \mathcal{H}_{S}$.\\

In the following, we will need the notion of an automorphic form. For each prime number $p < \infty$, let $D_{p} := G_{p} \cap \textup{SL}_{n+4}(\mathbb{Z}_{p})$ and let 
    \begin{equation*}
        D_{f} := \prod_{p < \infty} D_{p}.
    \end{equation*}

\begin{defn}\label{automorphic form}
    Let $k \geq 0$. A function $\bm{F} : G_{\mathbb{A}} \longrightarrow \mathbb{C}$ is called a holomorphic cusp form of weight $k$ with respect to $D_{f}$ if the following conditions hold:
    \begin{enumerate}
        \item $\bm{F}(\gamma g u) = \bm{F}(g) \textup{ }\forall \gamma \in G_{\mathbb{Q}}, u \in D_{f}$.
        \item For any $g=g_{\infty}g_{f}$ with $g_{\infty} \in G_{\mathbb{R}}^{+}$ and $g_{f} \in G_{\mathbb{A},f}$, $\bm{F}(g_{\infty}g_{f})j(g_{\infty},Z_0)^{k}$ depends only on $g_{f}$ and $Z = g_{\infty}\langle Z_0\rangle$ and is holomorphic on $\mathcal{H}_{S}$ as a function of $Z$.
        \item $\bm{F}$ is bounded on $G_{\mathbb{A}}$.
    \end{enumerate}
    Denote the above space by $\mathfrak{S}_{k}(D_{f})$.
\end{defn}

Let now $\Gamma$ denote a congruence subgroup of $G_{\mathbb{Q}}\cap G_{\mathbb{R}}^{+}$, in the sense of \cite[p. 62]{Shimura_Euler_Product}. We then have the following definition of an orthogonal modular form in the classical sense.

\begin{defn}\label{modular_form_defn}
A holomorphic function $F : \mathcal{H}_S \longrightarrow \mathbb{C}$ is called a modular form of weight $k \geq 0$ with respect to $\Gamma$ if it satisfies the equation
\begin{equation*}
    (F|_k \gamma)(Z) := j(\gamma, Z)^{-k}F(\gamma\langle Z\rangle) = F(Z)
\end{equation*}
for all $\gamma \in \Gamma$ and $Z \in \mathcal{H}_S$. We will denote the set of such forms by $M_{k}(\Gamma)$.
\end{defn}

Given an $\bm{F}$ as in Definition \ref{automorphic form} and an $g_{f} \in G_{\mathbb{A},f}$, we can associate an orthogonal modular form. Indeed for $Z \in \mathcal{H}_{S}$, we define
\begin{equation}\label{adelic F}
    \bm{F}(g_{f}; Z) := \bm{F}(g_{\infty}g_{f})j(g_{\infty}, Z_0)^{k},
\end{equation}
where $g_{\infty} \in G^{+}_{\mathbb{R}}$ is chosen so that $Z = g_{\infty}\langle Z_0\rangle$. Let now
\begin{equation*}
    \Gamma(g_{f}) = G_{\mathbb{Q}} \cap (G_{\mathbb{R}}^{+} \times g_{f}D_{f}g_{f}^{-1}),
\end{equation*}
which is a discrete subgroup of $G_{\mathbb{R}}^{+}$. We then have
\begin{equation*}
    \bm{F}(g_{f}; \gamma \langle Z\rangle) = j(\gamma, Z)^{k}\bm{F}(g_{f}; Z)
\end{equation*}
for all $\gamma \in \Gamma(g_{f})$ and $Z \in \mathcal{H}_S$. In particular, $\bm{F}(g_f; Z) $ is an orthogonal modular form of weight $k$ with respect to $\Gamma(g_f)$, in the sense of Definition \ref{modular_form_defn}. \\

From \cite[p. 29]{sugano_lifting}, we can identify $\mathfrak{S}_k(D_f)$ with $S_k(\Gamma(id))$ by sending $\bm{F} \longmapsto F(id; Z)$.\\

Now, if $X \in V_0$, define the element $\gamma_{X} \in G$ by
\begin{equation*}
    \gamma_{X} := \m{1&-X^{t}S_0 & -\frac{1}{2}S_0[X]\\ 0&1_{n+2}&X\\0&0&1}.
\end{equation*}
The holomorphic function $\bm{F}(g_{f};Z)$ is invariant under $Z \longmapsto Z+X$ for $X$ in the lattice
\begin{equation}\label{lattice-sugano}
    L(g_{f}):=\left\{X \in V_{0} \mid \gamma_{X} \in \Gamma(g_f)\right\}.
\end{equation}
Hence, as it is explained in \cite{sugano}, every such function then admits a Fourier expansion of the form
\begin{equation}\label{fourier-expansion-classical-orthogonal}
    \bm{F}(g_{f}; Z) = \sum_{\substack{r \in L(g_{f})^{*}\\ir \in \mathcal{H}_S}}a(g_{f}; r)e(2\phi_0(r, Z)),
\end{equation}
where 
\begin{equation*}
    L(g_{f})^{*}:=\left\{X \in V_0 \mid 2\phi_0(X,Y) \in \mathbb{Z} \textup{ for all } Y\in L(g_f)\right\}
\end{equation*}
is the dual lattice of $L(g_f)$ with respect to $\phi_0$.\\

Finally, let us introduce adelic Fourier coefficients. Let $\chi = \prod_{p \leq \infty}\chi_p$ be a character of $\mathbb{Q}_{\mathbb{A}}$ such that $\chi_{\mid \mathbb{Q}}:=1$ and $\chi_{\infty}(x) := e(x)$ for all $x \in \mathbb{R}$. For $\eta \in V_0$ and $g \in G_{\mathbb{A}}$, we define
\begin{equation}\label{adelic-fourier-coefficient}
    \bm{F}_{\chi}(g;\eta) := \int_{V_0\backslash V_{\mathbb{A}}} \bm{F}(\gamma_{\chi}g)\chi\left(-2\phi_0(\eta,X)\right) \textup{d}X.
\end{equation}
Now, for $g_{\infty} \in G_{\infty}^{0}$ and $g_{f} \in G_{\mathbb{A},f}$ we have
\begin{equation}\label{property of F_x}
    \bm{F}_{\chi}(g_{\infty}g_{f}; \eta) = a(g_{f};\eta)j(g_{\infty};Z_0)^{-k}e(2\phi_0(\eta, g_{\infty}\langle Z_0\rangle)).
\end{equation}
\section{The $n = 2$ case}\label{dirichlet series start}

We will now focus on the $n=2$ case, i.e. $\dim V = 2$. Assume there is a vector $v \in V$ such that $\phi[v] = 1$. Since $\phi$ is positive definite, hence anisotropic, from \cite[Section 7.2]{shimura2004arithmetic}, we obtain that $(V,\phi)$ is isomorphic with $(K, N_{K/\mathbb{Q}})$, where $K/\mathbb{Q}$ is a quadratic imaginary extension. Let $K := \mathbb{Q}(\sqrt{-m})$, where $m$ is a positive square-free integer and fix a basis $K = \mathbb{Q} + \mathbb{Q}\omega$, where 
\begin{equation}\label{omega}
    \omega = 
    \begin{cases}
    \sqrt{-m} \,\,\,\,\textup{ if } m\equiv 1,2 \pmod 4 \\
    \frac{1+\sqrt{-m}}{2} \,\,\,\,\textup{ if } m\equiv 3 \pmod 4
\end{cases}.
\end{equation}
This means that there is a linear isomorphism
\begin{align}\label{theta map}
    \theta : (V, \phi) &\longrightarrow (K, N_{K/\mathbb{Q}})\\\nonumber
    \m{a \\ b} &\longmapsto a+b\omega,
\end{align}
which satisfies $N_{K/\mathbb{Q}}(\theta(v)) = \phi[v], \  \forall v \in V$. In particular, we have that with respect to the standard basis of $V$, $\phi$ is represented by the matrix
\begin{equation}\label{matrix S}
    S := 
    \begin{cases}
    \m{2&0\\0&2m} \textup{ if } m\equiv 1,2 \pmod 4 \\
    \m{2&1\\1 & (m+1)/2} \textup{ if } m\equiv 3 \pmod 4
\end{cases}.
\end{equation}
\begin{remark}\label{Condition on phi}
    The condition $\phi[v]=1$ is not too restrictive. In particular, if this condition does not hold, then $(V,\phi)$ would be isomorphic to $(K, cN_{K/\mathbb{Q}})$, where $c \in \mathbb{Q}^{\times}$. But since $\textup{SO}^{\phi}(V) = \textup{SO}^{c^{-1}\phi}(V)$, we still get the isomorphism of the corresponding orthogonal groups.
\end{remark}

We would now like to consider the $\mathbb{Z}$-maximal lattices in $(V,\phi)$. We use the identification above. From \cite[Section 9.25]{shimura2004arithmetic}, the genus of the $\mathbb{Z}$-maximal lattices in $(K, N_{K/\mathbb{Q}})$ consists of all the $\mathcal{O}_K$-fractional ideals $\mathfrak{m}$ with the property that $\mathfrak{m}\, \overline{\mathfrak{m}} = \mathcal{O}_K$. We now describe the classes within this genus. We write $\textup{Cl}(K)$ for the class group of $K$ and $\textup{Cl}(K/\mathbb{Q})$ for the subgroup of $\textup{Cl}(K)$ generated by all principal ideals and the ideals $\mathfrak{a}$ such that $\mathfrak{a} = \overline{\mathfrak{a}}$. Then, as it is stated in \cite[Section 9.25]{shimura2004arithmetic} we have a bijection between $\textup{Cl}(K)/\textup{Cl}(K/\mathbb{Q})$ and the set of classes in the genus of the $\mathbb{Z}$-maximal ideals, which is given by sending an ideal $\mathfrak{m}$ to $\mathfrak{m}^{-1} \overline{\mathfrak{m}} = N(\mathfrak{m})^{-1} \overline{\mathfrak{m}}^2$. In particular, there are (see \cite[equation (9.16)]{shimura2004arithmetic}
\[
[\textup{Cl}(K):\textup{Cl}(K/\mathbb{Q})]= (\# \textup{Cl}(K)) \,2^{2-\kappa} 
\]
classes, where $\kappa$ is the number of ramified places, including the archimedean. For each such class, we will write $L_{\mathfrak{m}}$ for the $\mathbb{Z}$-maximal lattice in $V$ corresponding to the ideal $N(\mathfrak{m})^{-1}\overline{\mathfrak{m}}^2$. From the discussion in \cite[Section 9.27]{shimura2004arithmetic}, to the class of lattices represented by $L_\mathfrak{m}$ we may attach a matrix $S_{\mathfrak{m}}$ in the genus of $S$. In particular, let $A_\mathfrak{m} \in \textup{SL}_2(\mathbb{Q})$ such that $A_{\mathfrak{m}} L_{\mathcal{O}_K} = L_\mathfrak{m}$. We then set $S_{\mathfrak{m}}:= A_{\mathfrak{m}}^{t}SA_{\mathfrak{m}}$. \newline

We now consider the six-dimensional quadratic space $(V_1,\phi_1)$ as defined in Section \ref{modular-forms-section}, and we fix a Witt decomposition corresponding to the matrix $S_1$. That is, we may write
\[
V_1 = V + \sum_{i=1}^2 (\mathbb{Q}e_i + \mathbb{Q}f_i)
\]
where the vectors $e_1$ and $e_2$ span a maximal totally isotropic space in $V_1$ and so do $f_1$ and $f_2$. By \cite[Lemma 6.3]{shimura2004arithmetic}, we may consider the $\mathbb{Z}$-maximal lattices in $(V_1,\phi_1)$ defined as
\[
L_{\mathfrak{m},1} = L_{\mathfrak{m}} + \sum_{i=1}^2 (\mathbb{Z}e_i + \mathbb{Z}f_i),
\]
where $L_{\mathcal{O}_K,1} = \mathbb{Z}^6$. As in Section \ref{modular-forms-section}, we just write $L_1$ for this lattice. Similarly, we consider the lattices
\[
L_{\mathfrak{m},0} = \mathbb{Z}e_2 + L_{\mathfrak{m}} + \mathbb{Z}f_2,
\]
where this time we view $e_2, f_2$ as elements of $V_0$. Again $L_{\mathcal{O}_K, 0} = L_0$. For each $\mathfrak{m}$, we define the groups
\begin{equation*}
    \Gamma(L_{\mathfrak{m},0}):=\{g \in G^{*}_{\mathbb{Q}} \mid gL_{\mathfrak{m},0}=L_{\mathfrak{m},0}\}, \,\,\,\,\,\, \Gamma^{+}(L_{\mathfrak{m},0}) = \Gamma(L_{\mathfrak{m},0}) \cap G_{\mathbb{R}}^{*, +}
\end{equation*}
\begin{equation*}
    \Gamma(L_{\mathfrak{m},1}) := \{g \in G_{\mathbb{Q}} \mid gL_{\mathfrak{m},1} = L_{\mathfrak{m},1}\}, \,\,\,\,\,\, \Gamma^{+}(L_{\mathfrak{m},1}) = \Gamma(L_{\mathfrak{m},1}) \cap G_{\mathbb{R}}^{ +}
\end{equation*}

Finally, we define the discriminant kernels by
\begin{equation*}
    \widehat{\Gamma}(L_{\mathfrak{m},0}):= \{M \in \Gamma^{+}(L_{\mathfrak{m},0}) \mid (M -1_{4})L_{\mathfrak{m},0}^{*} \subset L_{\mathfrak{m},0}\}.
\end{equation*}
\begin{equation*}
    \widehat{\Gamma}(L_{\mathfrak{m},1}):= \{M \in \Gamma^{+}(L_{\mathfrak{m},1}) \mid (M -1_{6})L_{\mathfrak{m},1}^{*} \subset L_{\mathfrak{m},1}\}.
\end{equation*}

We now have the following Lemma.
\begin{lemma}\label{hm}
    For each fractional $\mathcal{O}_K$-fractional ideal $\mathfrak{m}$, there is $h_{\mathfrak{m}} \in G_{\mathbb{Q}}^{*} \cap G_{\mathbb{R}}^{*,+}$ such that $L_{\mathfrak{m},0} = h_{\mathfrak{m}}L_0$ and $\sigma(h_{\mathfrak{m}}) = N(\mathfrak{m})(\mathbb{Q}^{\times})^{2}$.
\end{lemma}
\begin{proof}
    Since the lattices $L_{\mathfrak{m}}$ belong in the genus of $L=\mathbb{Z}^2$, there is $g_{\mathfrak{m}} \in \textup{SO}^{\phi}(\mathbb{A})$ such that $L_{\mathfrak{m}} = g_{\mathfrak{m}}L$. Without loss of generality, we may choose all the $g_{\mf{m}}$ to be trivial at infinity. Clearly 
    \begin{equation}\label{lattices-relation-adelic-element}
        L_{\mathfrak{m},0} = \textup{diag}(1, g_{\mathfrak{m}},1)L_{\mathfrak{m}}.
    \end{equation}

We will now show that we can choose $h_{\mf{m}}$ with the right conditions. For each prime $p<\infty$, we set $D_p^* :=G_p^* \cap \textup{SL}_{4}(\mathbb{Z}_p)$ and we let
\begin{equation*}
    D^* := G^*_{\mathbb{R}}\prod_{p<\infty}D^*_p.
\end{equation*}

We will choose the global element $h_{\mf{m}}$ as a ``good'' approximation to $\textup{diag}(1, g_{\mf{m}}, 1)$. That is, we claim that we may define $h_{\mf{m}} \in G^*_{\mathbb{Q}}$ such that $h_{\mf{m}} = \textup{diag}(1, g_{\mf{m}}, 1) d$ where $d \in D^*$ (in particular $d L_0 = L_0$), such that for the finite part of its spinor norm we have that 
\begin{equation*}
    \sigma(d)_f \in \left(\prod_p \mathbb{Z}_p^{\times}\right) \left(\mathbb{Q}^{\times}_{\mathbb{A}_f}\right)^2.
\end{equation*}
To see this, we set  $\widetilde{D} := \tau(J)\subseteq D^*$ with $J$ as defined in \cite[page 487]{shimura2006quadratic}. Then, we have that $G^*_{\mathbb{A}} = G^*_{\mathbb{Q}}\widetilde{D}$. 
This follows from \cite[Lemma 9.21 (iv) and Lemma 9.22 (iii)]{shimura2004arithmetic}, 
and the fact that $\nu(J_p) = \mathbb{Z}_p^{\times}$ for every finite prime $p$ (\cite[page 496]{shimura2006quadratic}). Therefore, there are $h_{\mathfrak{m}} \in G_{\mathbb{Q}}^{*}$ and $d \in \widetilde{D}$ such that $\textup{diag}(1, g_{\mf{m}}, 1)^{-1} = h_{\mf{m}}^{-1}d$, i.e. $h_{\mathfrak{m}} = \textup{diag}(1, g_{\mf{m}}, 1)d$, with the properties of $\sigma(d)_{f}$ as claimed.\\

In particular, $\sigma(h_{\mf{m}}) = \sigma\left(\textup{diag}(1, g_{\mf{m}},1) \right) \sigma(d)$. From \cite[Lemma 4.9]{shimura2004arithmetic}, we have that $\sigma\left(\textup{diag}(1, g_{\mf{m}},1) \right) = \sigma(g_{\mf{m}})$, and we will use the isometry $\theta: V \rightarrow K$ to calculate the spinor norm of $g_{\mf{m}}$. We write $\widetilde{\theta}: \textup{SO}^{\phi}(V) \rightarrow K^1$ for the isomorphism of the orthogonal groups, where $K^1$ denotes the elements of $K$ of norm $1$.\\

Recall $\theta(L_{\mf{m}}) = \overline{\mf{m}}^2/N(\mf{m}) = \overline{\mf{m}}/\mf{m}$. Now, for each prime $p$, the fractional ideal $\mf{m}_p$ is principal in $K_p$, say $\mf{m}_p = (\alpha_p)$ with some $\alpha_p \in K_p$ (note that $K_p$ may or may not be a field). We then have $\widetilde{\theta}(g_{\mf{m},p}) = \overline{\alpha}_p/\alpha_p$ in $K_p^{1}$. From \cite[Section 7.2]{shimura2004arithmetic}, we have that the element $\alpha_p$ is a lift of $\overline{\alpha}_p/\alpha_p$ in the corresponding Clifford group under $\tau$, and satisfies $\nu(\alpha_p) = N_{K_{p}/\mathbb{Q}_p}(\alpha_p) = N_{K_{p}/\mathbb{Q}_p}(\mf{m}_p)$. By putting these together, we get $\sigma(g_{\mf{m}})_{f} = N(\mf{m}) (\mathbb{Q}_{\mathbb{A},f}^{\times})^2$.\\

Going back to the equation 
\[
\sigma(h_{\mf{m}}) = \sigma\left(\textup{diag}(1, g_{\mf{m}},1) \right) \sigma(d),
\]
we now have that $\sigma(h_{\mf{m}}) = N(\mf{m})_f \sigma(d) (\mathbb{Q}^{\times}_{\mathbb{A},f})^2$. Since $\sigma(h_{\mf{m}}) \in \mathbb{Q}^{\times}$, and the selection of $d$, we may conclude that for all finite primes we have that $v_p(\sigma(h_{\mf{m}})) = v_p(N(\mf{m})) \pmod{2}$, where $v_p$ is the standard valuation associated to $p$. In particular, we have that $\sigma(h_{\mf{m}}) = \pm N(\mf{m}) \,(\mathbb{Q}^{\times})^2$. To conclude the statement, it is enough to determine the sign of $\sigma(h_{\mf{m}})$. But the sign of $\sigma(h_{\mf{m}})$ is $\pm 1$ according to whether $h_{\mf{m}}$ is in $G^{*, +}_{\mathbb{Q}}$ or not (see \cite[Theorem 14.2 (ii)]{shimura2004arithmetic}. In particular, since $\sigma\left(\textup{diag}(-1,1,1,-1) \right) = -1 (\mathbb{Q}^{\times})^2$, we can always multiply with such an element to have $h_{\mf{m}} \in G_{\mathbb{R}}^{*, +}$, which will then given $\sigma(h_{\mf{m}})= N(\mf{m}) (\mathbb{Q}^{\times})^{2}$, as required.
\end{proof}
Given $h_{\mathfrak{m}}$, we define $\widetilde{h}_{\mathfrak{m}} := \textup{diag}(1, h_{\mathfrak{m}}, 1) \in G_{\mathbb{Q}}$. We now have the following couple of Lemmas.
\begin{lemma}\label{conjugation}
    Let $g \in G_{\mathbb{R}}^+$ and $h \in G_{\mathbb{R}}$. Then $hg h^{-1} \in G_{\mathbb{R}}^+$. Similarly, the same holds for $G^*_{\mathbb{R}}$ and $G_{\mathbb{R}}^{*,+}$.
\end{lemma}
\begin{proof}
We use the fact that $G_{\mathbb{R}}^+$ (resp. $G_{\mathbb{R}}^{*,+}$) is of index two in $G_{\mathbb{R}}$ (resp. $G_{\mathbb{R}}^{*}$) (see, for example, \cite[p. 140]{shimura2004arithmetic}), which holds for indefinite quadratic forms. This then shows that $G_{\mathbb{R}}^{+}$ (resp. $G_{\mathbb{R}}^{*,+}$) is a normal subgroup in $G_{\mathbb{R}}$ (resp. $G_{\mathbb{R}}^{*}$), from which the result follows.

\end{proof}

\begin{lemma} \label{isomorphisms} The map $g \longmapsto h_{\mathfrak{m}}g h_{\mathfrak{m}}^{-1}$ induces the isomorphisms
\begin{equation*}
    \Gamma(L_0) \cong \Gamma(L_{\mathfrak{m},0}), \,\,\,\ \Gamma^+(L_0) \cong \Gamma^+(L_{\mathfrak{m},0}), \,\,\,\ \widehat{\Gamma}(L_0) \cong \widehat{\Gamma}(L_{\mathfrak{m},0}).
\end{equation*}
Similarly, the map $g \longmapsto \widetilde{h}_{\mathfrak{m}}g \widetilde{h}_{\mathfrak{m}}^{-1}$ induces the isomorphisms  
\begin{equation*}
    \Gamma(L_1) \cong \Gamma(L_{\mathfrak{m},1}), \,\,\,\, \Gamma^+(L_1) \cong \Gamma^+(L_{\mathfrak{m},1}), \,\,\,\,\widehat{\Gamma}(L_1) \cong \widehat{\Gamma}(L_{\mathfrak{m},1}).
\end{equation*}
\end{lemma}
\begin{proof}
The first isomorphism $\Gamma(L_0) \cong \Gamma(L_{\mathfrak{m},0})$ is clear, since $h_{\mathfrak{m}}$ of Lemma \ref{hm}, satisfies $L_{\mathfrak{m}} = h_{\mathfrak{m}}L_0$. The second isomorphism $\Gamma^+(L_0) \cong \Gamma^+(L_{\mathfrak{m},0})$ follows from Lemma \ref{conjugation}. Finally, from \cite[Section 6.1]{shimura2004arithmetic}, we have $L^*_{\mathfrak{m},0} = h_{\mathfrak{m}} L^*_0$. Hence, if $g \in \widehat{\Gamma}(L_0)$, then
\begin{equation*}
    (h_\mathfrak{m}gh_\mathfrak{m}^{-1}-1)L_{\mathfrak{m},0}^{*} = h_\mathfrak{m}gL_{0}^{*}-h_{\mathfrak{m}}L_0^{*} = h_\mathfrak{m}(g-1)L_0^{*} \subset h_\mathfrak{m}L_0 = L_{\mathfrak{m},0}
\end{equation*}
and therefore the same map induces an isomorphism $\widehat{\Gamma}(L_0) \cong \widehat{\Gamma}(L_{\mathfrak{m},0})$. Similarly, we obtain the rest isomorphisms, since $\widetilde{h}_{\mathfrak{m}}$ satisfies $L_{\mathfrak{m}, 1} = \widetilde{h}_{\mathfrak{m}}L_1$. \qedhere

\end{proof}

We take now $\bm{F} \in \mathfrak{S}_{k}(D_{f})$. For each fractional ideal $\mathfrak{m}$, we define $F_{\mathfrak{m}}(Z) := \bm{F}(\widetilde{h}_{\mathfrak{m}}; Z)$ using \eqref{adelic F}. We then have that $F_{\mathfrak{m}}(\gamma \langle Z\rangle) = j(\gamma, Z)^{k}F_{\mathfrak{m}}(Z)$ for all $\gamma \in \Gamma(\widetilde{h}_{\mathfrak{m}}) = G_{\mathbb{Q}} \cap (G_{\mathbb{R}}^{+} \times \widetilde{h}_{\mathfrak{m}} D \widetilde{h}_{\mathfrak{m}}^{-1})$. We note that $\Gamma(\widetilde{h}_{\mathfrak{m}}) = \{\gamma \in  G_{\mathbb{Q}} \cap G_{\mathbb{R}}^{+} \mid \gamma L_{\mathfrak{m},1} = L_{\mathfrak{m},1}\}=\Gamma^{+}(L_{\mathfrak{m},1})$. In particular, we have that  $F_{\mathfrak{m}}$ is an orthogonal modular form for $\Gamma^{+}(L_{\mathfrak{m},1})$ in the sense of Definition \ref{modular_form_defn} (i.e., with $\Gamma$ equal to $\Gamma^{+}(L_{\mathfrak{m},1})$). When $\mathfrak{m} = \mathcal{O}_K$, we will write $F := F_{\mathcal{O}_K}$, which is invariant under $\Gamma^{+}(L_1)$.

\begin{lemma} With notation as above, we have that $L_{\mathfrak{m},0} = L(\widetilde{h}_m)$, where $L(\tilde{h}_m)$ is the lattice defined in \eqref{lattice-sugano}.

\end{lemma}

\begin{proof}
The element $\widetilde{h}_{\mathfrak{m}}$ may be written as
$\widetilde{h}_{\mathfrak{m}} = \textup{diag}(1_2, g_{\mathfrak{m}}, 1_2)k$, where $g_{\mathfrak{m}} \in \textup{SO}^{\phi}(\mathbb{A})$ and $k \in D$.\\
Indeed, since $L_{\mathfrak{m},1} = \widetilde{h}_{\mathfrak{m}}L_1$ and $L_{\mathfrak{m,1}} = \textup{diag}(1_2, g_{\mathfrak{m}}, 1_2) L_1$, we have that $\textup{diag}(1_2, g_{\mathfrak{m}}, 1_2)^{-1}\widetilde{h}_{\mathfrak{m}} \in D$.\\

We first show that $L(\widetilde{h}_\mathfrak{m}) \subseteq L_{\mathfrak{m},0}$. We consider $X \in L\left(\widetilde{h}_\mathfrak{m}\right)$. From the definition of the group $\Gamma\left(\widetilde{h}_{\mathfrak{m}}\right)$ we see
\[
\Gamma\left(\widetilde{h}_{\mathfrak{m}}\right) = \textup{SO}^{\phi_1}(V_1) \cap \widetilde{h}_{\mathfrak{m}} D \widetilde{h}_{\mathfrak{m}}^{-1}= \textup{SO}^{\phi_1}(V_1) \cap \m{1_2 & 0 & 0 \\ 0 & g_{\mathfrak{m}} & 0 \\ 0 & 0 & 1_2} D \m{1_2 & 0 & 0 \\ 0 & g_{\mathfrak{m}} & 0 \\ 0 & 0 & 1_2}^{-1}.
\]
We now calculate:
\begin{equation}\label{calculations}
\m{1_2 & 0 & 0 \\ 0 & g_{\mathfrak{m}} & 0 \\ 0 & 0 & 1_2}^{-1} \gamma_X \m{1_2 & 0 & 0 \\ 0 & g_{\mathfrak{m}} & 0 \\ 0 & 0 & 1_2} = \m{ 1 & -X^t S_0 g_{\mathfrak{m},0} & -\frac{1}{2}S_0[X] \\ 0 & 1_4 & g_{\mathfrak{m},0}^{-1}X \\ 0 & 0 & 1},
\end{equation}
where we have set $g_{\mathfrak{m},0} := \m{1 & 0 & 0 \\ 0 & g_{\mathfrak{m}} & 0 \\ 0 & 0 & 1 } \in \textup{SO}^{\phi_0}(\mathbb{A})$. Moreover we may write $-X^t S_0 g_{\mathfrak{m},0} = -(g_{\mathfrak{m},0}^{-1}X)^t S_0$ since $g_{\mathfrak{m},0}^t S_0 g_{\mathfrak{m},0} = S_0$. Now we check the condition
\[
\m{ 1 & (g_{\mathfrak{m},0}^{-1}X)^t S_0 & -\frac{1}{2}S_0[g_{\mathfrak{m},0}^{-1}X] \\ 0 & 1_4 & g_{\mathfrak{m},0}^{-1}X \\ 0 & 0 & 1} \in D.
\]
That is, we require that $g_{\mathfrak{m},0}^{-1}X \in \prod_{p<\infty} L_{0,p}$, and so $X \in g_{\mathfrak{m},0}\prod_{p < \infty} L_{0,p}$. But $X$ is a global element and hence we have that $X \in \mathbb{Q}^4 \cap g_{\mathfrak{m},0}\prod_{p < \infty} L_{0,p} = L_{\mathfrak{m},0}$.\newline

We now show the other inclusion, namely $L_{\mathfrak{m},0} \subseteq L(\widetilde{h}_\mathfrak{m})$. We consider an $X \in L_{\mathfrak{m},0}$. In particular, since $L_{\mathfrak{m},0} = h_{\mathfrak{m}}L_0$, we may write $X = h_{\mathfrak{m}} v$ for $v \in L_0$. From the calculations \eqref{calculations}, it is enough to check whether 
\[
\m{ 1 & (g_{\mathfrak{m},0}^{-1}X)^t S_0 & -\frac{1}{2}S_0[g_{\mathfrak{m},0}^{-1}X] \\ 0 & 1_4 & g_{\mathfrak{m},0}^{-1}X \\ 0 & 0 & 1} \in D
\]
But $g_{\mathfrak{m},0}^{-1}X \in g_{\mathfrak{m},0}^{-1} h_{\mathfrak{m}} L_0 = L_0$, and hence all the upper triangular entries are in $\mathbb{Z}$. This shows that the matrix is in $D$.
\end{proof}
In particular, the Fourier expansion \eqref{fourier-expansion-classical-orthogonal} of the modular forms $F_{\mathfrak{m}}$ can be written over the dual of the lattice $L_{\mathfrak{m},0}$. That is,
\[
F_\mathfrak{m}(Z) = \sum_{\eta \in L_{\mathfrak{m},0}^{*}} a(\widetilde{h}_{\mathfrak{m}};\eta) e(S_0(\eta,Z)).
\]

In the case when $\mf{m} = \mathcal{O}_K$, we will write $A(\eta) := a(id; \eta)$ for all $\eta \in L_0^{*}$ for the Fourier coefficients of $F = F_{\mathcal{O}_K}$. We are now able to relate the Fourier coefficients of the various $F_{\mathfrak{m}}$ with those of $F$, using the following Lemma.

\begin{lemma}\label{fourier-coeffs-different-m}
    For the Fourier coefficients of $F_{\mathfrak{m}}$, we have
    \begin{equation*}
        a(\widetilde{h}_{\mathfrak{m}};\eta) = a\left(id; h_{\mathfrak{m}}^{-1}\eta\right) = A(h^{-1}_{\mf{m}}\eta),
    \end{equation*}
\end{lemma}
where $id$ denotes the identity element in $G_{\mathbb{A},f}$.
\begin{proof}
    
    Using the properties in \cite[eq. (1.14)] {sugano} on the adelic Fourier coefficients of $\bm{F}$ (see \eqref{adelic-fourier-coefficient}), we obtain
    \begin{equation*}
        \bm{F}_{\chi}(\widetilde{h}_{\mathfrak{m}}; \eta)  = \bm{F}_{\chi}(id; h_{\mathfrak{m}}^{-1}\eta) = a(id; h_{\mathfrak{m}}^{-1}\eta)e(\eta^t h_{\mathfrak{m}}^{-t} S_0 Z_0).
    \end{equation*}
    But from \eqref{property of F_x}, we also have
    \begin{multline*}
        \bm{F}_{\chi}(\widetilde{h}_{\mf{m}};\eta) = a(\widetilde{h}_{\mathfrak{m}}; \eta)j(\widetilde{h}_{\mathfrak{m}}, Z_0)^{-k}e(\eta^{t}S_0\widetilde{h}_{\mathfrak{m}}\langle Z_0\rangle) = a(\widetilde{h}_{\mathfrak{m}}; \eta)e(\eta^tS_0h_{\mathfrak{m}} Z_0) =\\= a(\widetilde{h}_{\mathfrak{m}};\eta)e(\eta^{t}h_{\mathfrak{m}}^{-t}S_0 Z_0),
    \end{multline*}
    using the fact that $h_{\mathfrak{m}}^{t}S_0 h_{\mathfrak{m}} = S_0$. By comparing these two equations, the Lemma follows.
\end{proof}
We now have the following Lemma about the dual lattice of $L_{\mathfrak{m},0}$. 

\begin{lemma}\label{dual-Lm}

   With notation as above, we have that
    \begin{equation*}
        L_{\mathfrak{m},0}^{*} = \mathbb{Z}e_2 + L_{\mathfrak{m}}^{*} + \mathbb{Z}f_{2},
    \end{equation*}
    where $L_{\mathfrak{m}}^{*}$ is the dual lattice of $L_\mathfrak{m}$ with respect to $\phi$. 
\end{lemma}
\begin{proof}
    We will show the result by showing both inclusions. 
    \begin{itemize}
        \item $L_{\mathfrak{m},0}^{*} \supseteq \mathbb{Z}e_2 + L_{\mathfrak{m}}^{*} + \mathbb{Z}f_{2}$. If $v = (a, b, c)^{t}$ with $a, c \in \mathbb{Z}$ and $b \in L_{\mathfrak{m}}^{*}$, we can compute for any $v' = (a',b',c') \in L_{\mathfrak{m}}$, with $a',c' \in \mathbb{Z}$, $b' \in L_{\mathfrak{m}}$,
        \begin{equation*}
            vS_0v' = a'c + c'a - b^{t}Sb' \in \mathbb{Z},
        \end{equation*}
        because $a,a',c,c' \in \mathbb{Z}$ and $b^t S b \in \mathbb{Z}$. Hence $v \in L_{\mathfrak{m},0}^{*}$.
        \item $L_{\mathfrak{m},0}^{*} \subseteq \mathbb{Z}e_2 + L_{\mathfrak{m}}^{*} + \mathbb{Z}f_{2}$. Suppose now $v = (a, b, c)^{t} \in L_{\mathfrak{m},0}^{*}$ with $a, c \in \mathbb{Q}$ and $b \in V$. Then $v^tS_0 y \in \mathbb{Z}$ for all $y \in L_{\mathfrak{m},0}$ implies that if we choose $y=e_2$ and $y =f_{2}$, then $a ,c \in \mathbb{Z}$. Moreover, if $y = (0, z, 0)^{t}$ with $z \in L_{\mathfrak{m}}$, we have $v^t S_0 y = b^t Sz \in \mathbb{Z}$ for all $z \in L_{\mathfrak{m}}$. This implies $b \in L_{\mathfrak{m}}^{*}$, and hence the result follows. \qedhere
    \end{itemize}
\end{proof}

From the lemma above, for any $\eta \in L^*_{\mathfrak{m},0}$, we may write $\eta = (N,\lambda,n)$ with $N,n \in \mathbb{Z}$ and $\lambda \in L^*_{\mathfrak{m}}$. Then we have
\[
F_\mathfrak{m}(Z) = \sum_{N=1}^{\infty}\phi_{\mathfrak{m},N}(z,\tau) e(N \tau'),
\]
where 
\[
\phi_{\mathfrak{m},N}(z,\tau)=\sum_{\substack{\lambda \in L^*_{\mathfrak{m}},\  n \in \mathbb{Z},\\ (N,\lambda,n) \in L^*_{\mathfrak{m},0}}} a(\widetilde{h}_{\mathfrak{m}}; (N,\lambda,n)) e(n \tau + \lambda^t S z).
\]
Now, since $F_{\mf{m}} \in S_k(\Gamma(L_{\mf{m},1})) \leq S_{k}(\widehat{\Gamma}(L_{\mf{m},1}))$, it can be shown that for each $N \geq 1$, $\phi_{\mf{m}, N}$ is a Jacobi cusp form of weight $k$ (as of $F_{\mathfrak{m}})$ and index $(L_{\mathfrak{m}}, N\phi)$ in the sense of \cite[Definition 1.23, 1.26]{jacobi_lattice}. We denote this space by $S_k(L_{\mathfrak{m}}, N\phi)$. We now recall that we have defined $A_\mathfrak{m} \in \textup{SL}_2(\mathbb{Q})$ such that $A_{\mathfrak{m}} L_{\mathcal{O}_K} = L_\mathfrak{m}$, and we have set $S_{\mathfrak{m}} = A_{\mathfrak{m}}^{t} S A_{\mathfrak{m}}$. With this notation we obtain that $L^*_{\mathfrak{m}} = S^{-1}A_{\mathfrak{m}}^{-t}\mathbb{Z}^{2}$.  Using this, we obtain,
\begin{multline}\label{expansion-of-jacobi}
\phi_{\mathfrak{m},N}(z,\tau) = \sum_{\lambda \in \mathbb{Z}^2, n \in \mathbb{Z}}a(\widetilde{h}_{\mathfrak{m}}; (N, S^{-1}A_{\mathfrak{m}}^{-t}\lambda,n))e(n \tau + \lambda^t A^{-1}_{\mathfrak{m}} z) =\\ =\sum_{\lambda \in \mathbb{Z}^2, n \in \mathbb{Z}}a(\widetilde{h}_{\mathfrak{m}}; (N, A_{\mathfrak{m}}S_{\mathfrak{m}}^{-1}\lambda,n))e(n \tau + \lambda^t A^{-1}_{\mathfrak{m}} z).
\end{multline}

\section{Poincar\'e Series}
We will now define a special class of Fourier--Jacobi forms, called Poincar\'e series, associated to the lattices $L_{\mathfrak{m}}$ above. To begin with, the support of the lattice $L_{\mathfrak{m}}$ in $V$ is
    \begin{equation*}
        \textup{supp}(L_{\mathfrak{m}}, \phi) := \left\{(D_{\mathfrak{m}},r_{\mathfrak{m}}) \mid D_{\mathfrak{m}} \in \mathbb{Q}_{\leq 0}, r_{\mathfrak{m}} \in L_{\mathfrak{m}}^{*}, D_{\mathfrak{m}} \equiv \frac{1}{2}S[r_{\mathfrak{m}}] \pmod {\mathbb{Z}}\right\}.
    \end{equation*}
Now, since $L_{\mathfrak{m}}^{*} = S^{-1}A_{\mathfrak{m}}^{-t}\mathbb{Z}^2$, we write $r_{\mathfrak{m}} \longmapsto S^{-1}A_{\mathfrak{m}}^{-t}r_{\mathfrak{m}}$, with $r_{\mathfrak{m}} \in \mathbb{Z}^2$. Hence, the congruence becomes
\begin{equation*}
    q_{\mathfrak{m}}D_{\mathfrak{m}} \equiv \frac{1}{2}q_{\mathfrak{m}}S_{\mathfrak{m}}^{-1}[r_{\mathfrak{m}}] \pmod {q_{\mathfrak{m}}\mathbb{Z}},
\end{equation*}
where $q_{\mathfrak{m}}$ is the level of the lattice $L_{\mathfrak{m}}$. This then implies $D_{\mathfrak{m}} \in \frac{1}{q_{\mathfrak{m}}}\mathbb{Z}$. Hence, by writing $D_{\mathfrak{m}} \longmapsto D_{\mathfrak{m}}/q_{\mathfrak{m}}$, we get that equivalently 
\begin{equation*}
    \textup{supp}(L, \phi) = \left\{(D_{\mathfrak{m}}/q_{\mathfrak{m}},S^{-1}A_{\mathfrak{m}}^{-t}r_{\mathfrak{m}}) \mid D_{\mathfrak{m}} \in \mathbb{Z}_{\leq 0}, r_{\mathfrak{m}} \in \mathbb{Z}^2, D_{\mathfrak{m}} \equiv \frac{1}{2}q_{\mathfrak{m}}S_{\mathfrak{m}}^{-1}[r_{\mathfrak{m}}] \pmod {q_{\mathfrak{m}}}\right\}.
\end{equation*}
Let then
\begin{equation}\label{support poincare}
    \widetilde{\textup{supp}}(L_{\mathfrak{m}}, \phi) :=  \{(D_{\mathfrak{m}},r_{\mathfrak{m}}) \in \mathbb{Z}_{\leq 0}\times \mathbb{Z}^2 \mid (D_{\mathfrak{m}}/q_{\mathfrak{m}}, S^{-1}A_{\mathfrak{m}}^{-t}r_{\mathfrak{m}}) \in \textup{supp}(L_{\mathfrak{m}}, \phi)\}.
\end{equation}
and in the following, this is the set we will use.\\

If now $(D_{\mf{m}},r_{\mf{m}}) \in  \widetilde{\textup{supp}}(L_{\mf{m}}, \phi)$, we define the Poincar\'e series $P(k,D_{\mf{m}},r_{\mf{m}})$ as in \cite[Definition 2.2]{jacobi_lattice}.  If $k > n/2+2$, $P(k,D_{\mf{m}},r_{\mf{m}})$ is absolutely and uniformly convergent on compact subsets of $\mathbb{H} \times (L_{\mf{m}} \otimes \mathbb{C})$ ($\mathbb{H}$ the usual upper half plane) and defines an element of $S_k(L_{\mf{m}}, \phi)$ (see \cite[Theorem 2.3, (i)]{jacobi_lattice}).\\

For $N \geq 1$, we denote by $V_N: S_k(L_{\mf{m}}, \phi) \longrightarrow S_k(L_{\mf{m}}, N\phi)$ the index-raising operator defined in

\cite[Definition 4.25]{jacobi_lattice}. In particular, for $(D_{\mf{m}},r_{\mf{m}}) \in \widetilde{\textup{supp}}(L_{\mf{m}}, \phi)$, we define
\begin{equation}\label{poincare in maass}
    \mathcal{P}_{\mf{m}}(\tau',z,\tau):=\mathcal{P}(k,D_{\mf{m}},r_{\mf{m}})(\tau', z,\tau) := \sum_{N \geq 1}(V_{N}P(k,D_{\mf{m}},r_{\mf{m}}))(\tau,z)e(N\tau').
\end{equation}
Here, by abusing notation, we write $P(k,D_{\mf{m}},r_{\mf{m}})$ to actually denote the Poincar\'e series $P(k,D_{\mf{m}},r_{\mf{m}})/\lambda_{k,D_{\mf{m}}}$, where $\lambda_{k,D}$ is defined in \cite[Theorem 2.3, (i)]{jacobi_lattice}.\\

Our first aim is to show that $\mathcal{P}_{\mf{m}} \in S_{k}(\widehat{\Gamma}(L_{\mf{m},1}))$. This is done in \cite{sugano_lifting} (see also \cite{krieg_jacobi}) for the lattice $L_1$ and here we simply explain how these ideas can be adapted to extend the lifting to all lattices $L_{\mf{m},1}$. We will first give a Lemma regarding some useful properties of primitive vectors (see Section \ref{preliminaries}).

\begin{lemma}\label{primitive} Let $\Lambda$ denote a $\mathbb{Z}$-lattice in some vector space $V$. The following properties hold:
\begin{enumerate}
\item Let $w \in \Lambda$. Then there exists a primitive $v \in \Lambda$ such that $w = l v$ with $l \in \mathbb{Z}$.   
\item Let $g \in \textup{GL}(V)$, $\Lambda_1$ and $
\Lambda_2$ lattices in $V$ such that $g \Lambda_1 =\Lambda_2$, and $v$ primitive in $\Lambda_1$. Then $gv$ is a primitive element in $\Lambda_2$.
\end{enumerate}
\end{lemma}
\begin{proof}
Let us first show (1). If $w$ is primitive, there is nothing to prove. Let us assume that this is not the case. We let $\mathfrak{a}_w := \{ k \in \mathbb{Q} \,\,\,| \,\,\, kw \in \Lambda\}$. This is a fractional ideal in $\mathbb{Q}$, and hence it can be written as $\mathfrak{a}_w = m \mathbb{Z}$ for some $m \in \mathbb{Q}$ with $m \neq \pm 1$. We write $m = \frac{m_1}{m_2}$ with $m_1,m_2 \in \mathbb{Z}$ and $gcd(m_1,m_2) = 1$. Since clealry, $1 \in \mathfrak{a}_{w}$ we have that $m_1 = 1$. That is $m = \frac{1}{m_2}$. We then claim that $v:= m_2^{-1} w \in \Lambda$ is a primitive element. Indeed, if we assume otherwise, then there exists $u \in \Lambda$ such that $ku = v$ for some integer $k \neq \pm 1$. That is, $k u = m_2^{-1} w$, and so $k^{-1}m^{-1}_2 \in \mathfrak{a}_w$. This is a contradiction, as $\mathfrak{a}_{w}$ is generated by $m_2^{-1}$. Hence, $v$ is primitive and clearly $w = m_2 v$. \newline

For (2), if we assume that $gv \in \Lambda_2$ is not primitive, then there exists a $w \in \Lambda_2$ such that $kw = gv$ for some $k \neq \pm1$. But then $k g^{-1} w = v$ and $g^{-1}w \in \Lambda_1$, contradicting the primitivity of $v$.
\end{proof}
We now define the integral Jacobi group of with respect to $L_{\mf{m}}$ to be $J_{S,\mf{m}} := \textup{SL}_2(\mathbb{Z}) \ltimes (L_{\mf{m}} \times L_{\mf{m}})$. We can embed this in $\widehat{\Gamma}(L_{\mf{m},1})$ as follows:
\begin{equation*}
    g = \m{a & b \\ c& d} \in \textup{SL}_2(\mathbb{Z}) \longmapsto \iota(g):=  \m{a & -b \\-c & d \\ &&1_{2}\\ &&&a&b\\ &&&c&d} \in \widehat{\Gamma}(L_{\mf{m},1}).
\end{equation*}
\begin{equation*}
    [\xi, \eta] \in L_{\mf{m}} \times L_{\mf{m}} \longmapsto \m{1 & 0 & \eta^{t}S& \xi^{t}S\eta & S[\eta]/2\\0&1 & \xi^t S&S[\xi]/2 & 0\\&&1_2&\xi & \eta\\ &&&1&0\\&&&0&1} \in \widehat{\Gamma}(L_{\mf{m},1}).
\end{equation*}
Let also 
\begin{equation*}
    M := \m{&&&&-1\\&&&-1\\&&1_2\\&-1\\-1}.
\end{equation*}
We then have the following Lemma.
\begin{lemma}\label{generators-lattice-m}
    $\widehat{\Gamma}(L_{\mf{m},1})$ is generated by $J_{S, \mf{m}}$, $M$ and $\widehat{\Gamma}(L_{\mf{m},0})$.
\end{lemma}
\begin{proof}
    Let $\Delta$ denote the subgroup of $\widehat{\Gamma}(L_{\mf{m},1})$ generated by the above elements. We first claim that if $\xi$ is a primitive element in $L_{\mf{m},1}^{*}$, then there exists $\gamma \in \Delta$ such that $\gamma \xi = (0, \mu,0)^{t}$, with $\mu = (a, \alpha, 1)^{t} \in L_{\mf{m},0}^{*}$. This is the analogous statement of \cite[Lemma 6.4]{sugano_lifting}. By writing $\xi = (x, u, v^t, w, z)^{t}$, such that $x, u, w, z \in \mathbb{Z}$ and $v \in L_{\mf{m}}^{*}$, we can follow Sugano's proof and obtain that there is $\delta \in \Delta$, so that if $\delta \xi = (x', u', (v')^{t}, w', z')$, then $x'=z'=0$. Now, since $\xi$ is primitive in $L_{\mf{m},1}^{*}$, we also get that the vector $\zeta := (u', (v')^{t}, w')^{t}$ is primitive in $L_{\mf{m},0}^{*}$.\\ 
    
    We now claim that that there is an $X \in L_{\mf{m},0}$ such that $2\phi_0(\zeta, X) = 1$. Indeed, if we assume otherwise, then the ideal of $\mathbb{Z}$ generated by $2\phi_0(\zeta,Y)$ with $Y \in L_{\mf{m},0}$ is not equal to $\mathbb{Z}$. Let us write $\beta \neq \pm 1$ for a generator of this ideal. That is, for all elements $Y \in L_{\mf{m},0}$ we have $2\phi_0(\zeta, Y) \in \beta \mathbb{Z}$, or equivalently $2\phi_0(\beta^{-1}\zeta,Y) \in \mathbb{Z}$ for all $Y \in L_{\mf{m},0}$. Hence $\beta^{-1} \zeta \in L^*_{\mf{m},0}$, which contradicts the fact that $\zeta$ is primitive in $L^*_{\mf{m},0}$.  Since now $X \in L_{\mf{m},0}$, we can finish the proof of the claim as in Sugano.\\

    Let now $\gamma \in \widehat{\Gamma}({L_{\mf{m},1})}$ and let $\xi$ denotes its first column. Then, $\xi$ will be primitive in $L_{\mf{m},1}^{*}$. Indeed, since the vector $e_1 = (1,0,0,0,0,0)^t \in L^*_{\mf{m},1}$, we have that $\xi e_1$ is also primitive in $L^*_{\mf{m},1}$. This is nothing else than the first column of $\xi$. By the claim we established above, there is $\delta \in \Delta$ such that 
    $\delta \xi = (0 ,\mu, 0)^{t}$ with $\mu = (a, \alpha, 1)^{t} \in L_{\mf{m},0}^{*}$. But since $\xi \in L_{\mf{m},1}$, we have that $\mu \in L_{\mf{m},0}$. Now, if $R_{K_{\lambda}}$ is defined as in \cite[Proposition 1, (e)]{krieg_integral_orthogonal}, we have that if $\lambda = -\alpha$, then $R_{K_{-\alpha}} \in \Delta$ and $R_{K_{-\alpha}} \mu = (0, 0, 1)^{t}$ since $S_1[\mu]=0$. By then multiplying by $\iota\left(\m{0&1\\-1&0}\right)M$, we obtain that $\iota\left(\m{0&1\\-1&0}\right)MR_{K_{-\alpha}}\delta \gamma \in \Gamma_{\infty}$, where $\Gamma_{\infty}$ denotes the parabolic fixing the standard zero-dimensional cusp. But now, similar to \cite[Proposition 3]{krieg_integral_orthogonal}, we have that any element of $\Gamma_{\infty}$ can be written in the form $\pm R_{K}M_{c}$, where $R_K$ and $M_c$ are defined as in \cite[Proposition 1]{krieg_integral_orthogonal}. In particular, in our setting, we have $K \in \widehat{\Gamma}(L_{\mf{m},0})$ and $c \in L_{\mf{m},0}$, and therefore both of these elements are in  $\Delta$. Hence, $\gamma \in \Delta$, from which the Lemma follows.
\end{proof}
Using the above Lemma, we can then show that $\mathcal{P}_{\mf{m}} \in S_k(\widehat{\Gamma}(L_{\mf{m},1}))$.
\begin{proposition}
    $\mathcal{P}_m$ is a modular form of weight $k$ with respect to $\widehat{\Gamma}(L_{\mf{m},1})$.
\end{proposition}
\begin{proof} We recall from \eqref{poincare in maass},
 \[  \mathcal{P}_{\mf{m}}(\tau',z,\tau)= \sum_{N \geq 1}(V_{N}P_{\mf{m}})(\tau,z)e(N\tau').
\]
By Lemma \ref{generators-lattice-m}, to show that $\mathcal{P}_{\mf{m}}$ is a modular form of weight $k$ with respect to $\widehat{\Gamma}(L_{\mf{m},1})$ it is enough to check the modularity property with respect to $M$ and the elements of $J_{S, \mf{m}}$ and $\widehat{\Gamma}(L_{\mf{m},0})$. But this follows exactly as in \cite[Theorem 6.2]{sugano_lifting}.
\end{proof}

We also note that since $\mathcal{P}_m$ arises as a lift of $P_{\mathfrak{m}}$ via the operator $V_N$, we have $\mathcal{P}_{\mf{m}}$ will be in the corresponding Maass space, i.e. it will satisfy the conditions of \cite[eq. (5.12), (5.13)]{sugano_lifting} for the lattice $L_{\mf{m},1}$.\\

Our next aim is to show that when we make a compatible choice of the $D_{\mf{m}}$, namely $D_{\mf{m}} = -\ell q_{\mf{m}}$ for some positive integer $\ell$, then the  $\mathcal{P}_{\mf{m}}$'s, are related, in the sense that they are arise from the same adelic automorphic form $\bm{P}$, in the way described just after Definition \ref{modular_form_defn}. We do this by using the explicit knowledge of the Fourier coefficients of the $P_{\mf{m}}$ and the way we construct $\mathcal{P}_m$ using the $V_N$ operator. In particular, this choice of $D_{\mf{m}}$ forces $r_{\mf{m}} \in L_{\mf{m}}$, and therefore, we can take $r_{\mf{m}}=0$, because of \cite[Theorem 2.3]{jacobi_lattice}. \\

For each $P_{\mf{m}}$ as above, we write its Fourier expansion as
\begin{equation*}
    P_{\mf{m}}(\tau, z) = \sum_{a \in \mathbb{Z}, \alpha \in L_{\mf{m}}^{*}}c_{\mf{m}}(a, \alpha) e(a\tau + \alpha^t S z).
\end{equation*}

It is well known (see for example \cite[Prop 1.25]{jacobi_lattice}) that $c_{\mf{m}}(a,\alpha) = c_{\mf{m}}(a',\alpha')$ if $\alpha \equiv \alpha' \pmod{L_{\mf{m}}}$ and $a-\phi[\alpha] = a'-\phi[\alpha']$. In particular, making the usual change of variables and setting $C_{\mf{m}}(\widetilde{D},\alpha):= c_{\mf{m}}(\phi[\alpha]-\widetilde{D},\alpha)$, we have that
\[
P_{\mf{m}}(\tau, z) = \sum_{(\widetilde{D},\alpha) \in \textup{supp}(L_{\mf{m}},\phi)}C_{\mf{m}}(\widetilde{D}, \alpha) e((\phi[\alpha]-\widetilde{D})\tau + \alpha^t S z),
\]
and $C_{\mf{m}}(\widetilde{D}_1, \alpha_1)=C_{\mf{m}}(\widetilde{D}_2, \alpha_2)$ if $\widetilde{D}_1 = \widetilde{D_2}$ and $\alpha_1 \equiv \alpha_2 \pmod{L_{\mf{m}}}$.\\

In \cite[Theorem 2.3]{jacobi_lattice} the coefficients $C_{\mf{m}}(\widetilde{D},\alpha)$ are determined.

\begin{theorem}\label{Fourier Coefficients of Poincare}
The Fourier coefficients $C_{\mf{m}}(\widetilde{D},\alpha)$ are given as (recall the weight $k$ is even)
\begin{multline*}
\lambda_{k,D_{\mf{m}}}C_{\mf{m}}(\widetilde{D},\alpha) = 2\delta_{(-\ell,0)}(\widetilde{D},\alpha) + \frac{2\pi i ^k}{\textup{det}(L_{\mf{m}})^{1/2}} \left(\frac{-\widetilde{D}}{\ell}\right)^{\frac{k-2}{2}}2\sum_{c \geq 1}c^{-2}J_{k-2}\left(\frac{4\pi(-\ell \widetilde{D})^{1/2}}{c}\right)\times\\\times H_{L_{\mf{m}},c}(\widetilde{D},\alpha),
\end{multline*}
where
\begin{equation*}
    \delta_{(-\ell,0)}(\widetilde{D},\alpha) := \begin{cases}
    1, \,\, \textup{if} \,\,\widetilde{D}=-\ell,\,\,\textup{and} \,\,\alpha \in L_{\mf{m}}\\
    0,\,\,\textup{otherwise}
\end{cases},
\end{equation*}
and
\[
H_{L_{\mf{m},c}}(\widetilde{D},\alpha):= \sum_{d \in \left(\mathbb{Z}/c\mathbb{Z}\right)^{\times}}\sum_{\lambda \in L_{\mf{m}}/cL_{\mf{m}}} e_c\left((\phi[\lambda]+ \ell)d^{-1} + (\phi[\alpha]-\widetilde{D})d + 2\phi(\alpha, \lambda)\right),
\]
with $e_c(x) := \textup{exp}\left(\frac{2\pi x}{c}\right), \, x\in \mathbb{R}$ and $\lambda_{k,D_{\mf{m}}}$ the normalising factor of \cite[Theorem 2.3, (i)]{jacobi_lattice}. The function $J_{k-2}$ is the $J$-Bessel function of index $k-2$ (see \cite[(1.1)]{jacobi_lattice}). 
\end{theorem}

\begin{remark}
   In the above theorem it is understood that for $c=1$ we have $H_{L_{\mf{m}},1}(\widetilde{D},\alpha) = \textup{exp}\left(2 \pi i (\ell+\phi[\alpha]-\widetilde{D}) \right)$. 
\end{remark}

By \cite[Theorem 4.26]{jacobi_lattice}, the coefficients $c_{\mf{m}}(a,\alpha)$ of $P_{\mf{m}}$ determine those of $V_N \,P_{\mf{m}}$. Using this, each $\mathcal{P}_{\mf{m}}$ will have a Fourier expansion of the form (cf. \cite[eq. (6.2)]{sugano_lifting})
\begin{equation*}
    \mathcal{P}_{\mf{m}}(Z) = \sum_{\eta \in L_{\mf{m},0}^{*}} A_{\mf{m}}(\eta) e(\eta^{t}S_0 Z) = \sum_{a, b \in \mathbb{N}, \alpha \in L_{\mf{m},0}^{*}}\sum_{r} r^{k-1} c_{\mf{m}}\left(\frac{ab}{r^2}, \frac{\alpha}{r}\right)e(b\tau' + \alpha^t Sz + a \tau),
\end{equation*}
where $r$ runs over all positive integers $r \mid \gcd(a,b)$ and $\alpha/r \in L_{\mf{m},0}^{*}$.\\

In particular, if $\eta = (a, \alpha, b) \in L_{\mf{m},0}^{*}$, then 
\begin{equation}\label{relation-fourier-with-fj}
    A_{\mf{m}}(\eta) = \sum_{r} r^{k-1} c_{\mf{m}}\left(\frac{ab}{r^2}, \frac{\alpha}{r}\right),
\end{equation}
using the fact that $c_{\mf{m}}(n,r) = c_{\mf{m}}(n,-r)$, since we assume the weight $k$ is even.\\

From the discussion above we know that $c_{\mf{m}}\left(\frac{ab}{r^2}, \frac{\alpha}{r}\right) =C_{\mf{m}}(\widetilde{D},\frac{\alpha}{r})$, with $\widetilde{D}=\frac{ab}{r^2}- r^{-2}\phi[\alpha]$, depends only on the class of $r^{-1}\alpha \in L^*_{\mf{m}}$ modulo $L_{\mf{m}}$. Hence in order to understand how the Fourier coefficients of $\mathcal{P}_{\mf{m}}$, for the different choices of $\mf{m}$, are related, we need to understand how the discriminant group $L^*/L$ is related to $L^*_{\mf{m}}/L_{\mf{m}}$. We do this next. \newline

We first note that the element $h_{\mf{m}}$ induces a map $L^*/L \longrightarrow L^*_{\mf{m}}/L_{\mf{m}}$. Indeed, by Lemma \ref{dual-Lm} we have the bijections $L_0^*/L_0 \longrightarrow L^*/L$ and $L^*_{\mf{m},0}/L_{\mf{m},0} \longrightarrow L^*_{\mf{m}}/L_{\mf{m}}$. Since $L_{\mf{m},0} = h_\mf{m} L_0$, the claim follows. We then simply write $h_{\mf{m}}: L^*/L \longrightarrow L^*_{\mf{m}}/L_{\mf{m}}$ for this map. \\

We now want to relate the above map to another map which is induced by the adelic elements $g_{\mf{m}}$. We start a little more generally.\newline

Let $X$ be a finite-dimensional $\mathbb{Q}$-vector space and write $\Lambda$ for a $\mathbb{Z}$-lattice in it. Selecting a basis accordingly, we may identify $X$ with $\mathbb{Q}^n$ and $\Lambda$ with $\mathbb{Z}^n$, where $n$ is the dimension of $X$. We now write
\begin{equation*}
    X_{\mathbb{A}}:=\{ (x_p) \in X_p \,\,|\, x_p \in \Lambda_p,\,\,\textup{for almost all $p$}\},
\end{equation*}
where $X_p = X \otimes_{\mathbb{Q}} \mathbb{Q}_p$ and $\Lambda_p = \Lambda \otimes_{\mathbb{Z}} \mathbb{Z}_p$. 
For an element $g=(g_p) \in \textup{GL}_n(X_{\mathbb{A}})= \textup{GL}_n(X)_{\mathbb{A}}$ and $x \in X_{\mathbb{A}}$ we can define $gx \in X_{\mathbb{A}}$ by $(gx)_p = g_p x_p$. We have the following Lemma.

\begin{lemma} Let $\Lambda_{\mathbb{A}} = \prod_p {\Lambda_p}$. The map $x + \Lambda \mapsto (x_p) + \Lambda_{\mathbb{A}}$
gives an isomorphism of abelian groups $X/\Lambda \rightarrow X_{\mathbb{A}}/\Lambda_{\mathbb{A}}$.\end{lemma}

\begin{proof}
It is clear that the map is well-defined and injective, so what is left to be shown, is that the map is also surjective. That is, given any class $x + \Lambda_{\mathbb{A}} \in X_{\mathbb{A}}/\Lambda_{\mathbb{A}}$ we may find a representative in $X$. If we write $x = (x_p)$ then for all but finitely many $p$, we have $x_p \in \Lambda_p$. We write $\mathbf{s}$ for the finite set of primes where $x_p=(x_{1,p}, \ldots,x_{n,p}) \not \in \Lambda_p$. Since 
\begin{equation*}
    X_p/\Lambda_p \cong \mathbb{Q}_p^n/\mathbb{Z}^n_p \cong \left(\mathbb{Q}_p/\mathbb{Z}_p\right)^n \cong \left(\bigcup_{m\geq 1} p^{-m}\mathbb{Z}/\mathbb{Z}\right)^n,
\end{equation*}
for each $p \in \mathbf{s}$ we may write $x_p +\Lambda_p = w_p + \Lambda_p$ with $w_p = (w_{1,p}, \ldots, w_{n,p}) \in \left( \bigcup_{m\geq 1} p^{-m}\mathbb{Z}\right)^n \subset \mathbb{Q}^n$. We then set $z:= \sum_{p \in \mathbf{s}} w_p \in X$. Then we have $x+\Lambda_{\mathbb{A}} = z + \Lambda_{\mathbb{A}}$ since for any $q \neq p$ we have $w_p \in \Lambda_q$.
\end{proof}

Given an element $g \in \textup{GL}_n(X)_{\mathbb{A}}$, we obtain a map $g : X_{\mathbb{A}}/\Lambda_{\mathbb{A}}\rightarrow X_{\mathbb{A}}/ (g\Lambda)_{\mathbb{A}}$, and hence by the Lemma above a map $g :X/\Lambda\rightarrow X/ g\Lambda $. We also note that if $g \in \textup{GL}_n(X)$, then the above map is nothing else than the map induced by $g :X/\Lambda\rightarrow X/ g\Lambda $ since $(gx)_p = (g_p x_p)$. \newline

By the discussion above, the adelic element $g_{\mf{m}}$ induces a map $L^*/L \longrightarrow L^*_{\mf{m}}/L_{\mf{m}}$, and similarly the element $\textup{diag}( 1, g_{\mf{m}}, 1)$ induces a map $L^*_0/L_0 \longrightarrow L^*_{\mf{m},0}/L_{\mf{m},0}$. It is also clear that if we identify $L^*_0/L_0$ with $L^*/L$ and $L_{\mf{m},0}^*/L_{\mf{m},0}$ with $L_{\mf{m}}^*/L_{\mf{m}}$, the two maps agree. As above, we simply write $g_{\mf{m}} :L^*/L \rightarrow L^*_{\mf{m}}/L_{\mf{m}}$ for this map. The following Lemma will allow us to relate the map induced by $h_{\mf{m}}$ to the map induced by $g_{\mf{m}}$ as long as one is a good ``approximation'' of the other.
\begin{lemma}\label{h_m and g_m} The elements $h_{\mf{m}}$ of Lemma \ref{hm} have the property that the two maps $h_{\mf{m}} :L^*/L \rightarrow L^*_{\mf{m}}/L_{\mf{m}}$
and $g_{\mf{m}} :L^*/L \rightarrow L^*_{\mf{m}}/L_{\mf{m}}$ are the same.   
\end{lemma}

\begin{proof} We write $\widehat{D}^*$ for the discriminant kernel subgroup of $D^*$, that is 
\begin{equation*}
    \widehat{D}^*= \prod_{p} \widehat{D}^*_p,
\end{equation*}
where $\widehat{D}_p^* =\{ g_p \in D_p^*\,\,\,|\,\,\, (g_p-1)L^*_{0,p}\subseteq L_{0,p} \}$. Recall that $h_{\mf{m}}$ of Lemma \ref{hm} is chosen in the form $h_{\mf{m}} = \textup{diag}(1,g_{\mf{m}},1)d$, with $d \in \tau(J)$. But from \cite{paper}, we have that $\widehat{D}^{*} = \tau(J)$, so in fact $d \in \widehat{D}^{*}$.

We now consider the maps $L^*/L \longrightarrow L^*_{\mf{m}}/L_{\mf{m}}$, induced by $h_{\mf{m}}$ and $g_{\mf{m}}$. These two maps differ by the map induced by the element $d : L^*/L \longrightarrow L^*/L$. But this map is the identity map since $d$ is in the discriminant kernel subgroup, and hence the maps induced by $h_{\mf{m}}$ and $g_{\mf{m}}$ agree.   
\end{proof}
We now have the following Lemma.
\begin{lemma}\label{discriminant-modules}
   Let $\beta \in L^*_{\mf{m}}$ and $\beta' \in L^*$ such that $g_{\mf{m}}^{-1}(\beta) = \beta'$, with respect to the mapping $g_{\mf{m}}^{-1}: L_{\mf{m}}^*/L_{\mf{m}} \longrightarrow L^*/L$. Then for any $\widetilde{D}$ we have
   $C_{\mf{m}}(\widetilde{D}, \beta) = C(\widetilde{D}, \beta')$.
\end{lemma}

\begin{proof}
We use Theorem \ref{Fourier Coefficients of Poincare} and the explicit description of the Fourier coefficients of the Poincaré series. We first note that we have $\textup{det}(L) = \textup{det}(L_{\mf{m}})$ as the lattices are in the same genus. Actually, these are equal to $\# L^*/L = \# L^*_{\mf{m}}/L_{\mf{m}}$. Hence, in order to establish the Lemma it is enough to show that $H_{L,c}(\widetilde{D},g_{\mf{m}}^{-1}(\beta)) = H_{L_{\mf{m}},c}(\widetilde{D},\beta)$ for all $c \in \mathbb{N}$. But (recall we set $r_{\mf{m}}=0)$,
\[
H_{L,c}(\widetilde{D},g_{\mf{m}}^{-1}(\beta)) = 
\sum_{d \in (\mathbb{Z}/c\mathbb{Z})^{\times}} \sum_{\lambda \in L/cL} e_c((\phi[\lambda] +\ell)d^{-1} + (\phi[g^{-1}_{\mf{m}}(\beta)] -\widetilde{D})d+2\phi(g^{-1}_{\mf{m}}(\beta),\lambda)).
\]
Now we note that the expressions above within the exponential are all $\pmod{c}$ for $c \geq 1$. For $c \geq 2$, the element $g_{\mf{m}}$ induces an isometries $L/cL \rightarrow L_{\mf{m}}/cL_{\mf{m}}$ and $L^*/cL^* \rightarrow L^*_{\mf{m}}/cL^*_{\mf{m}}$. That is,
\[
H_{L,c}(\widetilde{D},g_{\mf{m}}^{-1}(\beta))=\sum_{d \in (\mathbb{Z}/c\mathbb{Z})^{\times}} \sum_{\lambda \in L/cL} e_c((\phi[g_{\mf{m}}(\lambda)] +\ell)d^{-1} + (\phi[\beta] -\widetilde{D})d+\phi(\beta,g_{\mf{m}}(\lambda))=
H_{L_\mf{m},c}(\widetilde{D}, \beta).
\]
For $c=1$ we have
\[
H_{L,1}(\widetilde{D},g_{\mf{m}}^{-1}(\beta))= e(\ell + \phi[g^{-1}_{\mf{m}}(\beta)] -\widetilde{D}) =e(\ell + \phi[\beta'] -\widetilde{D}).
\]
But $(\widetilde{D},\beta) \in \textup{supp}(L_{\mf{m}})$ and $(\widetilde{D},\beta')\in \textup{supp}(L)$. That is, $\phi[\beta] \equiv \widetilde{D} \equiv \phi[\beta'] \pmod{\mathbb{Z}}$. And so,
\[
H_{L,1}(\widetilde{D},g_{\mf{m}}^{-1}(\beta))=e(\ell + \phi[\beta'] -\widetilde{D})=1 = e(\ell + \phi[\beta] -\widetilde{D})=H_{L_{\mf{m}},1}(\widetilde{D},\beta).\qedhere
\]
\end{proof}
We are now ready to prove the following Proposition.
\begin{proposition}\label{adelic-poincare}
    Let $\bm{P} \in S_k(\widehat{D}_{f})$ corresponding to $\mathcal{P}=\mathcal{P}(k,-\ell q,0) \in S_k(\widehat{\Gamma}(L_1))$, for some $\ell \in \mathbb{N}$. Then, for each fractional ideal $\mathfrak{m}$, and with $D_{\mf{m}} = -\ell q_{\mf{m}}$ , and $r_{\mf{m}}=0$, we have
    \begin{equation*}
        \mathcal{P}_{\mf{m}}(Z) = \bm{P}(\widetilde{h}_{\mf{m}}; Z) , \ \forall Z \in \mathcal{H}_S.
    \end{equation*}
\end{proposition}

\begin{proof}
By Lemma \ref{fourier-coeffs-different-m}, with $F_{\mf{m}}(Z)$ there taken as  $\bm{P}(\widetilde{h}_{\mf{m}}; Z)$ it is enough to show that $A_{\mf{m}}(\eta) = A(h_{\mf{m}}^{-1}\eta)$ for all $\eta \in L_{\mf{m},0}^{*}$, where $A_{\mf{m}}$ are the Fourier coefficients of $\mathcal{P}_{\mf{m}}$.\\

By Lemma \ref{primitive}, we can write $\eta = d \zeta$, with $\zeta \in L_{\mf{m},0}^{*}$ primitive, and $d \in \mathbb{Z}_{\geq 1}$. Using \cite[eq. (5.12)]{sugano_lifting}, we can assume $\zeta = (a, \alpha, 1)$. We also write $h_{\mf{m}}^{-1}\zeta = (a', \alpha', b')$, which is now a primitive vector in $L_{0}^{*}$ by Lemma \ref{primitive}. We then have, using \cite[eq. (5.13)]{sugano_lifting},
\begin{equation*}
    A_{\mf{m}}(\eta) = A_{\mf{m}}(d\zeta) = \sum_{r \mid d} r^{k-1}A_{\mf{m}}\left(\m{ad^2r^{-2}\\\alpha dr^{-1}\\1}\right) = \sum_{r \mid d} r^{k-1} c_{\mf{m}}(ad^2r^{-2}, \alpha dr^{-1}).
\end{equation*}
 
On the other hand, we have 
\begin{multline*}
    A(h_{\mf{m}}^{-1}\eta) = A(dh_{\mf{m}}^{-1} \zeta) = A\left(d\m{a'\\\alpha'\\b'}\right) = A\left(d \m{a'b'\\\alpha' \\1}\right) = \sum_{r \mid d}r^{k-1}A\left(\m{a'b'd^2r^{-2}\\ \alpha' dr^{-1}\\ 1}\right)  =\\= \sum_{r\mid d} r^{k-1}c(a'b'd^2r^{-2}, \alpha' d r^{-1}).
\end{multline*}
We will now show that for each $r \mid d$, we have $c_{\mf{m}}(ad^2r^{-2}, \alpha dr^{-1}) = c(a'b'd^2r^{-2}, \alpha' d r^{-1})$. But, $\phi_0[\zeta] = \phi_0[h_{\mf{m}}^{-1}\zeta]$, so $a - \phi[\alpha] = a'b'-\phi[\alpha']$, which then implies $ad^2r^{-2}-\phi[\alpha d r^{-1}]  = a'b'd^2/r^2 - \phi[\alpha' dr^{-1}]$. Let then $-\widetilde{D} := ad^2r^{-2}- \phi[\alpha dr^{-1}]= a'b'd^2/r^2 - \phi[\alpha' dr^{-1}]$, then $C_{\mf{m}}(\widetilde{D},\alpha d/r) = c_{\mf{m}}(\phi[\alpha dr^{-1}]-\widetilde{D}, \alpha dr^{-1})$. Similarly, $C(\widetilde{D},\alpha'd/r) = c(\phi_0[\alpha'd/r]-\widetilde{D}, \alpha' d/r)$, we need to show $C_{\mf{m}}(\widetilde{D}, \alpha d/r) = C(\widetilde{D}, \alpha'd/r)$. By Lemma \ref{h_m and g_m}, we have $\alpha' = g_{\mf{m}}^{-1}(\alpha) \pmod {L}$, we get $\alpha'd/r = g_{\mf{m}}^{-1} (\alpha d/r) \pmod L$. But the coefficient $C(\widetilde{D}, \alpha' d/r)$ depends only on $\alpha'd/r \pmod L$ and hence $C(\widetilde{D}, \alpha' d/r) = C(\widetilde{D}, g_{\mf{m}}^{-1}(\alpha d/r))$. Therefore, we need to show $C_{\mf{m}}(\widetilde{D}, \alpha d/r) = C(\widetilde{D}, g_{\mf{m}}^{-1}(\alpha d /r))$ which follows from Lemma \ref{discriminant-modules}.
\end{proof}

\section{Dirichlet Series}\label{section-dirichlet-series}
Assume now $\bm{F}\in S_k(D_{f})$ and $\bm{G} \in S_k(\widehat{D}_f)$. For each fractional ideal $\mathfrak{m}$ of $\mathcal{O}_K$, consider the modular forms $F_{\mf{m}}, G_{\mf{m}}$ defined by $F_{\mf{m}}(Z):= \bm{F}(\widetilde{h}_{\mf{m}};Z)$ and $G_{\mf{m}}(Z):= \bm{G}(\widetilde{h}_{\mf{m}};Z)$ for $Z \in \mathcal{H}_S$, with corresponding Fourier-Jacobi coefficients $\{\phi_{\mf{m}, N}\}_{N=1}^{\infty}$ and $\{\psi_{\mf{m}, N}\}_{N=1}^{\infty}$. For any finite set of primes $\mathcal{Q}$, we define the $\mathfrak{m}$-th Dirichlet series
\begin{equation*}
    D_{\mathcal{Q}}(F_{\mf{m}}, G_{\mf{m}};s) := \sum_{\substack{N=1\\ (N,p)=1 \ \forall p \in \mathcal{Q}}}^{\infty} \langle \phi_{\mathfrak{m}, N}, \psi_{\mf{m}, N}\rangle N^{-s},
\end{equation*}
where $\langle \ , \ \rangle$ is the inner product in the space of Fourier-Jacobi forms of weight $k$ and index $(L_{\mathfrak{m}}, N\phi)$. By comparison with \cite[Lemma 6.1]{psyroukis_orthogonal}, this Dirichlet series converges absolutely for $\textup{Re}(s)>k+1$ and represents a holomorphic function on this domain. We then set 
\begin{equation}\label{general-dirichlet-series}
    D_{\mathcal{Q}}(\bm{F}, \bm{G};s):= \sum_{\mathfrak{m} \in \textup{Cl}(K)} \zeta_{\mf{m}, \mathcal{Q}}(s-k+3) D_{\mathcal{Q}}(F_{\mathfrak{m}},G_{\mathfrak{m}};s),
\end{equation}
where 
\begin{equation}\label{ideal-class-zeta}
    \zeta_{\mf{m}}(s) := \sum_{\substack{0\neq \mf{a} \subset \mathcal{O}_K\\\mf{a} \in \mf{m}}} N (\mathfrak{a})^{-s}
\end{equation}
denotes the zeta function attached to the ideal class $\mathfrak{m}$. Here $\mf{m}$ runs over a set of representatives of the class group of $K$. This is also absolutely convergent and holomorphic for $\textup{Re}(s)>k+1$. We now consider the case $\bm{G} = \bm{P}$, the Poincar\'{e} series defined in Proposition \ref{adelic-poincare}. We then have that for each $\mf{m}$ as above,

\begin{equation*}
    D_{\mathcal{Q}}(F_{\mathfrak{m}}, \mathcal{P}_{\mathfrak{m}};s) := \sum_{\substack{N=1\\ (N,p)=1 \ \forall p \in \mathcal{Q}}}^{\infty} \langle \phi_{\mathfrak{m}, N}, V_NP_{\mathfrak{m}}\rangle N^{-s}.
\end{equation*}

Before we go any further, we make some remarks on how the Dirichlet series $D_{\mathcal{Q}}(F_{\mathfrak{m}}, \mathcal{P}_{\mathfrak{m}};s)$ depends on the isometry class of the lattice $L_{\mf{m}}$. In particular, we will show that it is independent of the choice of the representatives $\mf{m}$.\\

Let us pick two lattices $L_{\mf{m}_1}$ and $L_{\mf{m}_2}$ and assume that there is exists an isometry $\sigma : L_{\mf{m}_1} \longrightarrow L_{\mf{m}_2}$. It is well known (see for example \cite[Section 4.1]{jacobi_lattice}) that if we extend $\sigma$ to obtain a bijection $\sigma :L_{\mf{m}_1} \otimes_{\mathbb{Z}} \mathbb{C} \longrightarrow L_{\mf{m}_2} \otimes_{\mathbb{Z}} \mathbb{C}$, then we have an isomorphism $S_{k}(L_{\mf{m}_2},N\phi) \longrightarrow S_{k}(L_{\mf{m}_1},N\phi)$, for any $N \geq 1$ given by $\psi(\tau,z) \longmapsto \sigma(\psi):=\psi(\tau, \sigma z)$. Moreover, we have that this map preserves the inner product, that is $\langle \psi_1, \psi_2\rangle = \langle \sigma(\psi_1), \sigma(\psi_2) \rangle$. This can be seen from the definition of the inner product as in \cite[Definition 1.33]{jacobi_lattice}. If we now write $$\psi(\tau,z) = \sum_{n \in \mathbb{Z}, r \in L^*_{\mf{m}_2}} c(n,r) e(n \tau + \phi(r,z)),$$ then 
\[
\sigma(\psi)(\tau,z) = \psi(\tau, \sigma(z))=\sum_{n \in \mathbb{Z},\ r \in L^*_{\mf{m}_{1}}} c(n,\sigma(r)) e(n \tau + \phi(r,z)).
\]
The isometry $\sigma$ induces a map $\sigma: L^*_{\mf{m}_1}/L_{\mf{m}_1} \longrightarrow L^*_{\mf{m}_2}/ L_{\mf{m}_2}$, and hence a similar argument as in the proof of Lemma \ref{discriminant-modules} gives that $P_{\mf{m}_1} = \sigma(P_{\mf{m}_2})$. Moreover, from the definition of the operator $V_N$, we see that $\sigma(V_N \psi) = V_N (\sigma (\psi))$ (see for example \cite[Theorem 4.26]{jacobi_lattice}, for any Jacobi form $\phi$, and hence in particular $\sigma(V_N(P_{\mf{m}_2})) = V_N(P_{\mf{m}_1})$, for all $N \geq 1$.\newline

Now we look at $\phi_{\mf{m}_i,N}$, for $i=1,2$,  the $N$-th Fourier-Jacobi of the modular form $F_{\mf{m}_i}$. We set $\sigma_0:= \textup{diag}(1, \sigma ,1) \in G^*_{\mathbb{Q}}$. We then observe that if $\gamma:=h_{\mf{m}_1} h^{-1}_{\mf{m}_2}\sigma_0$, then  $\gamma L_{\mf{m}_1, 0} = L_{\mf{m}_1, 0}$. In particular, $\gamma \in \Gamma(L_{\mf{m}_1,0})$, and so $\gamma_1:=\textup{diag}(1 ,\gamma,1) = \widetilde{h}_{\mf{m}_1}\widetilde{h}_{\mf{m}_2}^{-1} \textup{diag}(1,\sigma_0,1) \in \Gamma(L_{\mf{m}_1,1})$. Hence, we have that $F_{\mf{m}_1}|_k \gamma_1  = F_{\mf{m}_1}$,
or equivalently $F_{\mf{m}_1} (Z) = F_{\mf{m}_1}(\gamma Z) = F_{\mf{m}_1}(h_{\mf{m}_1} h_{\mf{m}_2}^{-1}\sigma_0(Z))$.\\

By Lemma \ref{fourier-coeffs-different-m}, we have that
\[
F_{\mf{m}_1} (Z)=\sum_{\eta \in L_{\mathfrak{m}_1,0}^{*}} A(h^{-1}_{\mf{m}_1}\eta) e(2\phi_0(\eta,h_{\mf{m}_1} h_{\mf{m}_2}^{-1}\sigma_0(Z)))=\sum_{\eta \in L_{\mathfrak{m}_1,0}^{*}} A(h^{-1}_{\mf{m}_1}\eta) e(2\phi_0(h_{\mf{m}_2}h_{\mf{m}_1}^{-1}\eta, \sigma_0(Z)))
\]

Setting $\eta \longmapsto h_{\mf{m}_1} h^{-1}_{\mf{m}_2}\eta'$ with $\eta' \in L^*_{\mf{m},2}$ we obtain,
\[
F_{\mf{m}_1} (Z)=F_{\mf{m}_2}(\sigma_0(Z))
\]
But $\sigma_0(Z) = \sigma_0(\tau',z,\tau) = (\tau',\sigma(z),\tau)$, and hence we obtain that $\phi_{\mf{m}_1,N}(\tau,z) = \phi_{\mf{m}_2,N}(\tau,\sigma_0(z))$ or equivalently $\sigma(\phi_{\mf{m}_2,N}) = \phi_{\mf{m}_1,N}$, for all $N \geq 1$. \newline

That is, if $L_{\mf{m}_1}$ is isometric to $L_{\mf{m}_2}$, we then obtain
\[
D_{\mathcal{Q}}(F_{\mathfrak{m}_1}, \mathcal{P}_{\mathfrak{m}_1};s) = D_{\mathcal{Q}}(F_{\mathfrak{m}_2}, \mathcal{P}_{\mathfrak{m}_2};s).
\]
In particular, from the discussion in Section \ref{dirichlet series start}, we know that $L_{\mf{m}_1}$ is isometric to $L_{\mf{m}_2}$ if and only if the fraction ideal $\mf{m}_1\mf{m}_2^{-1} \in \textup{Cl}(K/\mathbb{Q})$. In particular, this is the case for when $\mf{m}_1$ is in the same class as in $\mf{m}_2$ in $\textup{Cl}(K)$. Since the zeta function $\zeta_{\mf{m}}(s)$ is independent of the choice of representatives by definition, the Dirichlet series \eqref{general-dirichlet-series} is also independent of the choice of representatives. Moreover, we may write
\begin{equation*}
    D_{\mathcal{Q}}(\bm{F}, \bm{P};s)= \sum_{\mathfrak{m} \in \textup{Cl}(K)/\textup{Cl}(K/\mathbb{Q})}\left(\sum_{\mf{n}\in \textup{Cl}(K/\mathbb{Q})} \zeta_{\mf{m}\mf{n},\mathcal{Q}}(s-k+3) \right)D_{\mathcal{Q}}(F_{\mathfrak{m}},P_{\mathfrak{m}};s).
\end{equation*}
We now rewrite $ D_{\mathcal{Q}}(F_{\mathfrak{m}}, P_{\mathfrak{m}};s)$  as in \cite[Proposition 7.3]{psyroukis_orthogonal}, but with a small difference. Let us make this explicit.\\

First, for each $\mf{m}$, it is clear that $\widehat{\Gamma}(L_{\mf{m},0})$ acts on $L_{\mf{m},0}\left[\ell, 2^{-1}\mathbb{Z}\right]$ (see equation \eqref{shimura set}) via left multiplication. By Lemma \ref{isomorphisms}, we see that the map $v \longmapsto h_{\mf{m}}v$, for $v \in L_0[\ell,2^{-1}\mathbb{Z}]$ induces a bijection $L_0\left[\ell, 2^{-1}\mathbb{Z}\right]/\widehat{\Gamma}(L_0) \longrightarrow L_{\mf{m},0}\left[\ell, 2^{-1}\mathbb{Z}\right]/\widehat{\Gamma}(L_{\mf{m},0})$. We now have the following Lemma.

\begin{lemma}\label{claim on action of discriminant kernel}
For each $\mf{m}$, there is a finite number of orbits of the action of $\widehat{\Gamma}(L_{\mf{m},0})$ on $L_{\mf{m},0}\left[\ell, 2^{-1}\mathbb{Z}\right]$.
\end{lemma}
\begin{proof}
    From the above, it suffices to only show this for $\mf{m} = \mathcal{O}_K$. We defer the proof of this just after Proposition \ref{finite-quotients}.
\end{proof}
Hence, we may set $h_1 := \#\left(L_{\mf{m},0}\left[\ell, 2^{-1}\mathbb{Z}\right]/\widehat{\Gamma}(L_{\mf{m},0})\right)$, independent of $\mf{m}$. Moreover, in a similar fashion to the discussion before \cite[Proposition 7.3]{psyroukis_orthogonal}, we fix a set of representatives $\{\xi_i\}_{i=1}^{h_1}$ for the representatives of $L_0\left[\ell, 2^{-1}\mathbb{Z}\right]/\widehat{\Gamma}(L_0)$, such that $\xi_i \in \mathcal{P}_S$ for all $i$. We then set $\xi_{\mf{m},i} := h_{\mf{m}} \xi_i \in L_{\mf{m},0}\left[\ell, 2^{-1}\mathbb{Z}\right]$ for $i=1,\ldots h_1$. The set $\{\xi_{\mf{m},i}\}$ is then a set of representatives for $L_{\mf{m},0}\left[\ell, 2^{-1}\mathbb{Z}\right]/\widehat{\Gamma}(L_{\mf{m},0})$ and since we are taking $h_{\mf{m}} \in G^{*,+}_{\mathbb{R}}$, we have that also $\xi_{\mf{m},i} \in \mathcal{P}_S$. \newline

We now define
\begin{multline}\label{n-set}
    n(\xi_{\mathfrak{m},i};d) := \#\left\{s \in \mathbb{Z}^{2}/dS_{\mathfrak{m}}\mathbb{Z}^{2} \mid D_{\mf{m}} \equiv \frac{1}{2}q_{\mathfrak{m}}S_{\mathfrak{m}}^{-1}[s] \pmod{q_{\mathfrak{m}}d}, \right.\\\left.\left(\frac{\frac{1}{2}q_{\mathfrak{m}}S_{\mathfrak{m}}^{-1}[s]-D_{\mf{m}}}{q_{\mathfrak{m}}d}, A_{\mathfrak{m}}S_{\mathfrak{m}}^{-1}s, d\right)^{t} = \gamma \xi_{\mathfrak{m},i}, \gamma \in \widehat{\Gamma}^{+}(L_{\mathfrak{m},0})\right\}.
\end{multline}

We now set $\mathcal{P} := \bigcup_{\mf{m}} \mathcal{P}_{\mf{m}}$, where $\mathcal{P}_{\mf{m}}$ are finite sets containing all the prime factors of $D_{\mf{m}} = -\ell q_{\mf{m}}$. Using the expansions in \eqref{expansion-of-jacobi}, we then have the following Proposition, as in \cite[Proposition 7.3]{psyroukis_orthogonal}.
\begin{proposition}\label{first form of result}
    Let $(-\ell q_{\mf{m}},r_{\mf{m}}) \in \widetilde{\textup{supp}}(L_{\mf{m}}, \phi)$. We then have 
\begin{equation*}
    D_{\mathcal{Q}}(F_{\mathfrak{m}}, P_{\mathfrak{m}};s) = \sum_{i=1}^{h_{1}} \zeta_{\mathcal{Q}}(\xi_{\mathfrak{m},i}; s-k+3)D_{\mathcal{Q}}(F_{\mathfrak{m}}, \xi_{\mathfrak{m},i};s),
\end{equation*}
where 
\begin{equation*}
    D_{\mathcal{Q}}(F_{\mathfrak{m}}, \xi_{\mathfrak{m},i};s) := \sum_{\substack{N=1\\ (N,p)=1 \ \forall p \in \mathcal{Q}}}^{\infty} a(\widetilde{h}_{\mathfrak{m}}; N\xi_{\mathfrak{m},i})N^{-s},
\end{equation*}
and 
\begin{equation}\label{zeta xi i m definition}
    \zeta_{\mathcal{Q}}(\xi_{\mathfrak{m},i}; s) := \sum_{\substack{d=1\\ (d,p)=1 \ \forall p \in \mathcal{Q}}}^{\infty} n(\xi_{\mathfrak{m},i}; d)d^{-s}.
\end{equation}
\end{proposition}

Using now Lemma \ref{fourier-coeffs-different-m}, we can write $a(\widetilde{h}_{\mathfrak{m}}; N\xi_{\mathfrak{m},i}) = a(id, Nh_{\mathfrak{m}}^{-1}\xi_{\mathfrak{m},i}) = a(id, N\xi_{i})=A(N\xi_i)$ for all $N \not\in \mathcal{Q}$. In particular, we obtain that  $D_{\mathcal{Q}}(F_{\mathfrak{m}}, \xi_{\mathfrak{m},i};s) = D_{\mathcal{Q}}(F, \xi_i;s)$ for all $\mathfrak{m}$. \newline

We now select a set of representatives for $\textup{Cl}(K)$ to be relatively prime to any ramified ideals. That is, if we write $\mathfrak{d}_K$ for the different of $K$ over $\mathbb{Q}$, we assume that $(\mf{m},\mathfrak{d}_K) = 1$. Therefore,
\begin{multline*}
    D_{\mathcal{Q}}(\bm{F}, \bm{P};s) = \sum_{\mathfrak{m} \in \textup{Cl}(K)} \zeta_{\mf{m}, \mathcal{Q}}(s-k+3) D_{\mathcal{Q}}(F_{\mathfrak{m}},P_{\mathfrak{m}};s)= \\=\sum_{\mathfrak{m} \in \textup{Cl}(K)}\zeta_{\mf{m}, \mathcal{Q}}(s-k+3)  \sum_{i=1}^{h_{1}} \zeta_{\mathcal{Q}}(\xi_{\mathfrak{m},i}; s-k+3)D_{\mathcal{Q}}(F, \xi_{i};s)=\\=\sum_{i=1}^{h_{1}}\left(\sum_{\mathfrak{m} \in \textup{Cl}(K)}\zeta_{\mf{m}, \mathcal{Q}}(s-k+3)\zeta_{\mathcal{Q}}(\xi_{\mathfrak{m},i}; s-k+3) \right)D_{\mathcal{Q}}(F,\xi_i;s),
\end{multline*}
where 
\begin{equation}\label{original-dirichlet}
   D_{\mathcal{Q}}(F,\xi_i;s)=\sum_{\substack{N=1\\ (N,p)=1 \ \forall p \in \mathcal{Q}}}^{\infty} A(N\xi_i)N^{-s}. 
\end{equation}

\section{Equivalence of Quadratic Spaces}\label{equivalence of quadratic spaces}
Our aim is now to understand the Dirichlet series $D_{\mathcal{Q}}(F, \xi_i ;s)$. To that end, in this Section, we will introduce an isomorphism of the quadratic space $(V_0, \phi_0)$ with a quadratic space of Hermitian matrices over an imaginary quadratic field.

We consider the four-dimensional (over $\mathbb{Q}$) quadratic space $(U, \psi)$, where 
\begin{equation*}
    U := \left\{h \in M_{2}(K) \mid \overline{h}^t = h\right\},
\end{equation*}
and $\psi[h] = \det(h), \ \forall h \in U$. One has (see for example \cite[(2.1d)]{shimura2006quadratic}) that the corresponding bilinear form is given by $2\psi(x,y) = \textup{tr}(xy^{\iota})$, where $\iota$ here is the main involution of $M_2(K)$. We define the following elements of $U$
\begin{equation*}
    e := \m{1&0\\0&0}, \,\,f :=\m{0&0\\0&1}, \,\ g := \m{0&1\\1&0},
\end{equation*}
and set $\Lambda := \mathbb{Z}e + \mathbb{Z}f + \mathcal{O}_Kg$. By \cite[p. 495]{shimura2006quadratic} we have that this is a $\mathbb{Z}$-maximal lattice in $U$.\newline

Let also $U^{+} := \{u \in U \mid u > 0\}$, the set of positive definite matrices in $U$. We then have the following Lemma.

\begin{lemma}\label{isomorphism} 
    There is an isomorphism of quadratic spaces
    $f_{\omega} : (U, \psi) \longrightarrow (V_0, \phi_0)$, given as
    \begin{align}\label{linear isomorphism}
    \begin{split}
        f_{\omega} : U &\longrightarrow V_0 \\ 
            \m{x & y+\omega z\\y+ \overline{\omega}z & w} &\longmapsto \m{x\\y\\z\\w},
    \end{split}
    \end{align}
    where $x,y, z, w \in \mathbb{Q}$. Moreover, the map $f_{\omega}$ is a bijection between $U^{+}$ and $\mathcal{P}_{S}$.
\end{lemma}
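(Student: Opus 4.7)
The plan is to verify the lemma by direct computation. First, I would note that any Hermitian matrix $h \in U$ admits a unique expression
$h = \begin{pmatrix} x & y+\omega z \\ y+\overline{\omega} z & w \end{pmatrix}$
with $x,y,z,w \in \mathbb{Q}$, because $\{1,\omega\}$ is a $\mathbb{Q}$-basis of $K$, so the $(1,2)$-entry (a general element of $K$) is uniquely parameterised by $(y,z)$, and the $(2,1)$-entry is then forced by Hermiticity. Hence $f_\omega$ is a well-defined $\mathbb{Q}$-linear bijection of four-dimensional $\mathbb{Q}$-vector spaces.

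The next step is to check $\psi[h] = \phi_0[f_\omega(h)]$. On the $U$ side, one computes
$\psi[h] = \det(h) = xw - (y+\omega z)(y+\overline{\omega} z) = xw - y^2 - \textup{tr}(\omega)\, yz - N(\omega)\, z^2$.
On the $V_0$ side, unpacking $\phi_0[v] = \tfrac{1}{2} v^t S_0 v$ for $v = (x,y,z,w)^t$ yields
$\phi_0[f_\omega(h)] = xw - \tfrac{S_{11}}{2} y^2 - S_{12}\, yz - \tfrac{S_{22}}{2} z^2$.
Matching coefficients then reduces to two routine cases: when $m \equiv 1,2 \pmod 4$, one has $\textup{tr}(\omega) = 0$, $N(\omega) = m$, and $S = \textup{diag}(2,2m)$, so both expressions equal $xw - y^2 - mz^2$; when $m \equiv 3 \pmod 4$, one has $\textup{tr}(\omega) = 1$, $N(\omega) = (m+1)/4$, and $S$ as given, so both equal $xw - y^2 - yz - \tfrac{m+1}{4} z^2$.

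Finally, for the claim that $f_\omega$ restricts to a bijection $U^+ \to \mathcal{P}_S$, I would invoke Sylvester's criterion for $2\times 2$ Hermitian matrices: $h$ is positive definite iff its $(1,1)$-entry and its determinant are both positive. Under $f_\omega$ the $(1,1)$-entry $x$ is precisely the first coordinate $y_1$ of the image, and $\det(h) = \psi[h] = \phi_0[f_\omega(h)]$ by the previous step. Hence $h \in U^+$ if and only if $f_\omega(h)$ satisfies $y_1 > 0$ and $\phi_0[f_\omega(h)] > 0$, which is the definition of $\mathcal{P}_S$. Since $f_\omega$ is already a $\mathbb{Q}$-linear bijection $U \to V_0$, it is a fortiori an $\mathbb{R}$-linear bijection on the real points, so restriction gives the desired bijection of open cones.

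No step here is genuinely difficult; the whole argument is a short verification. The only thing to be careful about is the case split on $m \pmod 4$, since both the choice of $\omega$ and the choice of $S$ change, and the coefficients must be compared correctly in each case.
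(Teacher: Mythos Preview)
Your proposal is correct and follows exactly the same approach as the paper's proof: the paper simply asserts that $f_\omega$ is easily seen to be a linear isomorphism, that an easy computation gives $\phi_0[f_\omega(u)] = \psi[u]$, and that the last statement follows from Sylvester's criterion. You have carried out these steps explicitly, including the case split on $m \pmod 4$, which the paper leaves implicit.
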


\begin{proof} One sees easily that the map $f_\omega$ is well defined linear isomorphism of the $\mathbb{Q}$-vector spaces $U$ and $V_0$ (compare also with \cite[Lemma 1.24 and Lemma 1.25]{wernz}). Moreover, an easy computation gives $\phi_0[f_{\omega}(u)] = \psi[u]$, for $u \in U$, and therefore $f_{\omega}$ is indeed an isometry. The last statement of the Lemma follows from Sylvester's criterion.  
\end{proof}

It is now clear that $f_{\omega}(\Lambda) = L_0$. Moreover, $f_{\omega}$ induces the following isomorphism of orthogonal groups:
\begin{align}\label{g_omega}
\begin{split}
    g_{\omega}: \textup{SO}^{\psi}(U) &\xrightarrow[]{\sim} \textup{SO}^{\phi_0}(V_0)\\
    \alpha &\longmapsto f_{\omega} \circ \alpha \circ f_{\omega}^{-1}
\end{split}
\end{align}
This also extends to an isomorphism of the integral groups $\textup{SO}^{\psi}(\Lambda) \xrightarrow[]{\sim} \textup{SO}^{\phi_0}(L_0)$. Indeed, if $\alpha \in \textup{SO}^{\psi}(\Lambda)$, we have
\begin{equation*}
    g_\omega(\alpha)(L_0) = f_{\omega}\circ\alpha\circ f_{\omega}^{-1}(L_0) = f_{\omega} \circ \alpha(\Lambda) = f_{\omega}(\Lambda) = L_0,
\end{equation*}
using the fact that $\alpha(\Lambda) = \Lambda$ and that $f_\omega$ is an isomorphism.\\

We now have the following Lemma, for the elements of $L_0\left[\ell, 2^{-1}\mathbb{Z}\right]$, where $\ell$ is a non-zero integer.
\begin{lemma}\label{equivalence of solution sets}
    With the notation as above, we have
    \begin{equation*}
        h \in \Lambda\left[\ell, 2^{-1}\mathbb{Z}\right] \iff f_{\omega}(h) \in L_0\left[\ell, 2^{-1}\mathbb{Z}\right].
    \end{equation*}
\end{lemma}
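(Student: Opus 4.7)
The plan is to unpack the definition \eqref{shimura set} on both sides and compare them using the two properties of $f_\omega$ established in Lemma \ref{isomorphism}, namely that $f_\omega$ is a $\mathbb{Q}$-linear isometry $(U,\psi) \to (V_0, \phi_0)$ and that it sends $\Lambda$ to $L_0$.

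First I would note that, by definition, $h \in \Lambda[\ell, 2^{-1}\mathbb{Z}]$ is equivalent to the pair of conditions $\psi[h] = \ell$ and $\psi(h, \Lambda) = 2^{-1}\mathbb{Z}$, and analogously for the right-hand side with $(\phi_0, L_0)$ in place of $(\psi, \Lambda)$. The first condition transfers immediately: since $f_\omega$ is an isometry, $\phi_0[f_\omega(h)] = \psi[h]$, so $\psi[h] = \ell \iff \phi_0[f_\omega(h)] = \ell$.

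Next, I would handle the second condition by polarising. Because $\phi_0[f_\omega(u)] = \psi[u]$ for all $u \in U$, the polarisation identity gives $\phi_0(f_\omega(u), f_\omega(v)) = \psi(u, v)$ for all $u, v \in U$. Combined with $f_\omega(\Lambda) = L_0$, this yields
\begin{equation*}
    \phi_0\bigl(f_\omega(h), L_0\bigr) = \phi_0\bigl(f_\omega(h), f_\omega(\Lambda)\bigr) = \psi(h, \Lambda),
\end{equation*}
so the second condition also transfers. Combining the two equivalences gives the claim.

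There is no genuine obstacle here: the lemma is essentially a functoriality check, saying that the Shimura-style set $L[q,\mathfrak{b}]$ is preserved under any isometry that maps the ambient lattice isomorphically. The only thing worth being careful about is applying the polarisation identity cleanly, since $\phi_0$ and $\psi$ are defined via different formulae (symmetric bilinear form on $V_0$ versus the determinant/trace pairing on Hermitian matrices), but once one has $\phi_0[f_\omega(u)] = \psi[u]$ from the proof of Lemma \ref{isomorphism}, polarisation handles this uniformly.
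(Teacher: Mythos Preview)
Your proof is correct and is in fact cleaner than the paper's. The paper handles the first condition exactly as you do, but for the ideal condition it unpacks both sides explicitly: it invokes the characterisation $a\mathbb{Z} + d\mathbb{Z} + \textup{Tr}_{K/\mathbb{Q}}((b+\overline{\omega}c)\mathcal{O}_K) = \mathbb{Z}$ for $\psi(h,\Lambda) = 2^{-1}\mathbb{Z}$ (from \cite[(2.18)]{shimura2006quadratic}) and then computes $\phi_0(f_\omega(h),L_0)$ directly from the matrix $S$, splitting into the two cases $m \equiv 1,2 \pmod 4$ and $m \equiv 3 \pmod 4$ and checking in each that the resulting ideal of $\mathbb{Z}$ coincides with the trace description. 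Your polarisation argument bypasses all of this: once one knows that $f_\omega$ is an isometry with $f_\omega(\Lambda)=L_0$, the equality $\phi_0(f_\omega(h),L_0)=\psi(h,\Lambda)$ is automatic and no case analysis is needed. The paper's route has the minor advantage of making the ideal condition concrete in coordinates (which is loosely echoed later, e.g.\ in the proof of Proposition \ref{proposition r=n}), but for the purpose of this lemma your argument is both shorter and more transparent.
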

\begin{proof}
    From Lemma \ref{isomorphism}, we have $\psi[h] = \ell \iff \phi_0[f_{\omega}(h)] = \ell$. Therefore, we need to check only the condition on the ideals. Let 
    \begin{equation*}
        h = \m{a & b+\omega c \\ b+\overline{\omega}c & d} \in U
    \end{equation*}
    with $a,b,c,d \in \mathbb{Q}$. From \cite[(2.18)]{shimura2006quadratic}, we have that the condition on the ideals is equivalent to 
    \begin{equation}\label{trace condition on matrices}
        a\mathbb{Z} + d\mathbb{Z} + \textup{Tr}_{K/\mathbb{Q}}((b+\overline{\omega}c)\mathcal{O}_K) = \mathbb{Z}.
    \end{equation}
    We will now consider two cases:\\
    
    \textbf{Case 1}: $m \equiv 3 \pmod{4}$. Then
    \begin{equation}\label{trace condition on vectors1}
        \phi_0(f_{\omega}(h), L_0) = 2^{-1}\mathbb{Z} \iff a\mathbb{Z}+d\mathbb{Z} -(2b+c)\mathbb{Z}-\left(b + \frac{m+1}{2}c\right)\mathbb{Z} = \mathbb{Z}.
    \end{equation}
    
    But $\mathcal{O}_K$ has basis $\{1,\omega\}$. In order to compute $\textup{Tr}_{K/\mathbb{Q}}((b+\overline{\omega}c)\mathcal{O}_K)$, we will compute the traces of the linear maps $x \longmapsto \alpha x$ for $\alpha = b+\overline{\omega}c$ and $\alpha = (b+\overline{\omega}c)\omega = \frac{m+1}{4}c+b\omega$.

    \begin{equation*}
        \textup{Tr}_{K/\mathbb{Q}}(b+ \overline{\omega}c) = \textup{Tr} \left(\m{b+c & (m+1)c/4\\ -c & b}\right) = 2b+c.
    \end{equation*}
    \begin{equation*}
        \textup{Tr}_{K/\mathbb{Q}}\left((m+1)c/4+b\omega\right) = \textup{Tr} \left(\m{(m+1)c/4 & -(m+1)b/4 \\ b & b + (m+1)c/4}\right) = b + \frac{m+1}{2}c.
    \end{equation*}
    We now see that \eqref{trace condition on matrices} and \eqref{trace condition on vectors1} are equivalent.\\
    
    \textbf{Case 2}: $m \equiv 1, 2 \pmod{4}$. In this case,
    \begin{equation}\label{trace condition on vectors2}
        \phi_0(f_{\omega}(h), L_0) = 2^{-1}\mathbb{Z} \iff a\mathbb{Z}+d\mathbb{Z} -2b\mathbb{Z}-2mc\mathbb{Z} = \mathbb{Z}.
    \end{equation}
    In a similar fashion to before, we compute 
    \begin{equation*}
        \textup{Tr}_{K/\mathbb{Q}}(b+ \overline{\omega}c) = \textup{Tr} \left( \m{b & mc\\ -c & b}\right) = 2b.
    \end{equation*}
    \begin{equation*}
        \textup{Tr}_{K/\mathbb{Q}}((b+ \overline{\omega}c)\omega) = \textup{Tr} \left( \m{mc & -mb\\ b & mc}\right) = 2mc.
    \end{equation*}
    Again, it is now evident that \eqref{trace condition on matrices} and \eqref{trace condition on vectors2} are equivalent.
\end{proof}
The next step is to see how the action of $\widehat{\Gamma}(L_0)$ on $L_0\left[\ell, 2^{-1}\mathbb{Z}\right]$ translates in the Hermitian setting. We will first recall some basic definitions related to unitary groups of degree 2. We first define the Hermitian upper half plane
\begin{equation*}
    \mathbb{H}_2 := \{Z \in M_2(\mathbb{C}) \mid -i(Z-\overline{Z}^t) > 0\}.
\end{equation*}
We fix an embedding $K \xhookrightarrow{} \mathbb{C}$. The special unitary group of degree 2 over $K$ is given by 
\begin{equation*}
    \textup{SU}(2,2;K) := \{g \in \textup{SL}_{4}(K) \mid \overline{g}^{t}J g = J\},
\end{equation*}
where $J := \m{0 & -1\\1 & 0}$. Now, $M = \m{A & B \\ C & D} \in \textup{SU}(2,2;K)$ acts on $\mathbb{H}_2$ via
\begin{equation*}
    Z \longmapsto M\langle Z\rangle := (AZ+B)(CZ+D)^{-1}.
\end{equation*}
Finally, we set $\textup{SU}(2,2; \mathcal{O}_K) := \textup{SU}(2,2;K) \cap M_{4}(\mathcal{O}_K)$. We then have the following Proposition.
\begin{proposition}\label{relation of SL_2 with discriminant kernel}
    There is an isomorphism $\Psi : \textup{SL}_2(\mathcal{O}_K)/\{\pm 1\} \longrightarrow \widehat{\Gamma}^{+}(L_0)$ such that
    \begin{equation}\label{equivalence of classes}
        f_{\omega}(\gamma u \gamma^{*}) = \Psi(\gamma)f_{\omega}(u), \ \forall \gamma \in \textup{SL}_2(\mathcal{O}_K), \ u \in U_{+},
    \end{equation}
    where $\gamma^{*}:= \overline{\gamma}^{t}$ and $U^{+}$ is the set of positive definite matrices in $U$.
\end{proposition}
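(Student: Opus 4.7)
The plan is to construct $\Psi$ directly from Property \eqref{equivalence of classes}: the assignment
\[
\Psi(\gamma) := g_{\omega}\bigl(u \longmapsto \gamma u \gamma^{*}\bigr) = f_{\omega}\circ\bigl(u\mapsto \gamma u\gamma^{*}\bigr)\circ f_{\omega}^{-1}
\]
is forced on us, so the work is to verify that (i) it takes values in $\widehat{\Gamma}^{+}(L_{0})$, (ii) its kernel equals $\{\pm 1\}$, and (iii) it is surjective.

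I would first verify that $u\mapsto \gamma u\gamma^{*}$ lies in $\textup{SO}^{\psi}(U)$: it preserves Hermiticity, and $\det(\gamma u\gamma^{*}) = |\det\gamma|^{2}\det(u) = \det(u)$ since $\gamma\in\textup{SL}_{2}$. For the kernel, the relation $\gamma u\gamma^{*} = u$ for all $u\in U$ tested successively on $u = 1_{2}$, $u = e$, $u = g$ pins down $\gamma = \mathrm{diag}(\epsilon,\epsilon^{-1})$ with $|\epsilon|=1$ and $\epsilon = \bar\epsilon^{-1}$, hence $\gamma = \pm 1$. Homomorphy is immediate.

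Next, I would establish preservation of $L_{0}$ and the connected component. Writing $\gamma = \bigl(\begin{smallmatrix}p&q\\r&s\end{smallmatrix}\bigr)$ and $u = \bigl(\begin{smallmatrix}a&b\\\bar{b}&d\end{smallmatrix}\bigr)\in\Lambda$, the $(1,1)$-entry of $\gamma u\gamma^{*}$ is $a|p|^{2}+d|q|^{2}+\textup{Tr}_{K/\mathbb{Q}}(p b\bar{q}) \in \mathbb{Z}$; the other diagonal entry is similar, and the $(1,2)$-entry lies in $\mathcal{O}_{K}$. Therefore $\gamma\Lambda\gamma^{*}\subseteq\Lambda$, and so $\Psi(\gamma)\in\Gamma(L_{0})$. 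Since $u\mapsto\gamma u\gamma^{*}$ preserves positive-definiteness, the image lies in the identity component by Lemma \ref{isomorphism}. For the discriminant kernel condition, one first computes $\Lambda^{*} = \mathbb{Z}e + \mathbb{Z}f + \mathfrak{d}_{K/\mathbb{Q}}^{-1}g$ using $2\psi(x,y)=\textup{tr}(xy^{\iota})$; then the same entry-wise computation together with $ps-qr=1$ shows $\gamma u\gamma^{*}-u\in\Lambda$ for every $u\in\Lambda^{*}$, using only that traces of $\mathcal{O}_{K}\cdot\mathfrak{d}_{K/\mathbb{Q}}^{-1}$ are integral.

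The principal obstacle is surjectivity. My approach would be to match generators: $\widehat{\Gamma}^{+}(L_{0})$ is generated by Eichler transvections along the isotropic lines spanned by $e$ and $f$ together with the involution exchanging $e \leftrightarrow f$. Each of these appears in the image: the unipotent elementary matrices $\bigl(\begin{smallmatrix}1&\alpha\\0&1\end{smallmatrix}\bigr)$ and $\bigl(\begin{smallmatrix}1&0\\\alpha&1\end{smallmatrix}\bigr)$ with $\alpha\in\mathcal{O}_{K}$ map under $\Psi$ to the corresponding Eichler transvections, while $\bigl(\begin{smallmatrix}0&-1\\1&0\end{smallmatrix}\bigr)$ maps to the Weyl involution. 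The conceptual content behind this is the classical exceptional isogeny $\textup{Spin}(1,3)\cong \textup{SL}_{2}(\mathbb{C})$, whose integral form is precisely the claimed statement; a clean reference template is \cite[Lemma 1.24, 1.25]{wernz}, where an analogous identification is carried out. For the non-Euclidean class-number-one cases (where $\textup{SL}_{2}(\mathcal{O}_{K})$ is not generated by elementary matrices) one supplements the elementary matrices by the additional generators of $\textup{SL}_{2}(\mathcal{O}_{K})$ given by Cohn or Swan, and checks directly that their $\Psi$-images still land inside the span of the Eichler and Weyl elements; since this is a finite check for each of the four exceptional fields, it completes the argument.
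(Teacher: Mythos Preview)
Your construction of $\Psi$ and the verification that it lands in $\Gamma^{+}(L_0)$ are fine, and the kernel computation is correct. The discriminant-kernel check also goes through, though the off-diagonal entries need slightly more than traces: one uses that $\alpha-\bar\alpha\in\mathfrak{d}_{K/\mathbb{Q}}$ for every $\alpha\in\mathcal{O}_K$, so that $p\bar s-q\bar r-1=(p\bar s-q\bar r)-(ps-qr)\in\mathfrak{d}$, whence $b(p\bar s-1)+\bar b\,q\bar r = b(p\bar s-q\bar r-1)+(b+\bar b)q\bar r\in\mathcal{O}_K$. This is a minor point.

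The genuine gap is in surjectivity. You assert that $\widehat{\Gamma}^{+}(L_0)$ is generated by Eichler transvections along the two isotropic lines together with the Weyl involution, but you give no argument for this, and it is not a standard fact: Eichler--Kneser type generation theorems for discriminant kernels require Witt index at least $2$, whereas $L_0$ has signature $(1,3)$ and Witt index $1$. In fact the statement you are assuming is essentially equivalent to the surjectivity you want (via the very map $\Psi$), so the argument is circular. Your non-Euclidean paragraph compounds the confusion: checking that the extra Cohn--Swan generators of $\textup{SL}_2(\mathcal{O}_K)$ have $\Psi$-image inside $\langle\text{Eichler, Weyl}\rangle$ only describes $\operatorname{Im}\Psi$; it says nothing about whether $\widehat{\Gamma}^{+}(L_0)$ coincides with that subgroup.

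The paper sidesteps this entirely by working one level up. It quotes the isomorphism $\Phi:\textup{SU}(2,2;\mathcal{O}_K)/\{\pm1\}\xrightarrow{\sim}\widehat{\Gamma}_S$ from \cite[Theorem~3.17]{wernz}, embeds $\textup{SL}_2(\mathcal{O}_K)$ and $\widehat{\Gamma}^{+}(L_0)$ compatibly via $\gamma\mapsto\mathrm{diag}(\gamma,\bar\gamma^{-t})$ and $M\mapsto\mathrm{diag}(1,M,1)$, and defines $\Psi$ so that the square commutes. Surjectivity then becomes a concrete computation: given $M\in\widehat{\Gamma}^{+}(L_0)$, its $\Phi$-preimage in $\textup{SU}(2,2;\mathcal{O}_K)$ must, by the explicit shape of $\Phi$ in \cite[Theorem~3.13]{wernz}, have vanishing off-diagonal blocks and unit-determinant diagonal blocks, hence lie in the image of $\textup{SL}_2(\mathcal{O}_K)$. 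This reduces the hard structural question about generators of $\widehat{\Gamma}^{+}(L_0)$ to an already-established isomorphism in the $\textup{SU}(2,2)$ setting.
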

\begin{proof}
    From \cite[Theorem 3.17]{wernz}, we have that there is an isomorphism $\Phi : \textup{SU}(2,2; \mathcal{O}_K)/\{\pm 1\} \longrightarrow \widehat{\Gamma}_S$. Moreover, there is an embedding $\epsilon : \textup{SL}_2(\mathcal{O}_K) \xhookrightarrow{} \textup{SU}(2,2; \mathcal{O}_K)$ by sending $\gamma \longrightarrow \textup{diag}(\gamma, \overline{\gamma}^{-t})$, which is compatible with the embedding $\eta: \widehat\Gamma^{+}(L_0) \xhookrightarrow{}\widehat{\Gamma}_S$ via $\gamma \longmapsto \textup{diag}(1, \gamma, 1)$, because of the explicit form of $\Phi$ given in \cite[Theorem 3.13]{wernz}. We define a map $\Psi: \textup{SL}_2(\mathcal{O}_K)/\{\pm 1\}\longrightarrow \widehat{\Gamma}^{+}(L_0)$ by setting $\Psi(\gamma)$ the element such that $\Phi(\epsilon(\gamma)) = \eta(\Psi(\gamma))$, and we claim this is an isomorphism with the required property. First of all, it is a homomorphism from the way it is defined. To check $\Psi$ is injective, assume $\Psi(\gamma_1) = \Psi(\gamma_2)$. Then $\Phi(\epsilon(\gamma_1)) = \Phi(\epsilon(\gamma_2))$ and therefore $\epsilon(\gamma_1) = \pm \epsilon(\gamma_2)$, which in turn implies $\gamma_1 = \pm \gamma_2$, as wanted. Now, if $M \in \widehat{\Gamma}^{+}(L_0)$, we have $\eta(M) \in \widehat{\Gamma}_S$ and so there exists $\widetilde{\gamma} \in \textup{SU}(2,2;\mathcal{O}_K)$ such that $\Phi(\widetilde{\gamma}) = \eta(M)$. Now, let 
    \begin{equation*}
        \widetilde{\gamma} = \m{A& B \\ C&D},
    \end{equation*}
    with $A,B,C,D \in M_2(\mathcal{O}_K)$. From \cite[Theorem 3.13]{wernz}, we must have $\det A = \det D = 1$ and $f_{\omega}(A^{\sharp}B) = f_{\omega}(AC^{\sharp}) = f_{\omega}(BD^{\sharp}) = f_{\omega}(C^{\sharp}D) = 0$, where $^{\sharp}$ denotes the adjugate of a matrix (see \cite[(3.9)]{wernz}). But $f_{\omega}$ is injective and so $B = C = 0$ and then $\widetilde{\gamma} = \epsilon(A)$, with $A \in \textup{SL}_2(\mathcal{O}_K)$. Therefore, $\Psi$ is also surjective and hence an isomorphism.\\

    Let now $u \in U^{+}$. We then have that $iu \in \mathbb{H}_2$. From \cite[Theorem 3.5]{wernz}, we have $f_{\omega}(M \langle Z\rangle) = \Phi(M)\langle f_{\omega}(Z)\rangle$ for all $Z \in \mathbb{H}_2$ and $M \in \textup{SU}(2,2;\mathcal{O}_K)$. By taking now $M = \epsilon(\gamma)$ with $\gamma \in \textup{SL}_2(\mathcal{O}_K)$ and using that $\Phi(\epsilon(\gamma)) = \eta(\Psi(\gamma))$, \eqref{equivalence of classes} follows by unwinding the definitions of the actions.\qedhere
\end{proof}
We end this Section with the following identification of the orthogonal and Clifford groups of $(U,\psi)$, as is given by Shimura in \cite[Proposition 2.2]{shimura2006quadratic}.
\begin{proposition}\label{Xi proposition}
    Let $\mathcal{G} := \{\xi \in \textup{GL}_2(K) \mid \det(\xi) \in \mathbb{Q}^{\times}\}$. There is an isomorphism $\Xi$ of $G^{+}(U)$ onto $\mathcal{G}$, such that if $\alpha \in G^{+}(U)$ and $\xi = \Xi(\alpha)$, then $\nu(\alpha) = \det (\xi)$ and $\tau(\alpha) h = \det(\xi)^{-1}\xi h \overline{\xi}^{t}$ for all $h \in U$. In particular, $\textup{SO}^{\psi}(U)$ is isomorphic to $\mathcal{G}/\mathbb{Q}^{\times}$.
\end{proposition}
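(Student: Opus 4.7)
The strategy is to realise the even Clifford algebra $A^+(U)$ explicitly as $M_2(K)$, after which the Clifford group $G^+(U)$ is identified with the subgroup $\mathcal{G} \subset M_2(K)^\times$ whose conjugation action preserves $U \subset A(U)$.

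First I would identify $A^+(U)$ with $M_2(K)$ as a $\mathbb{Q}$-algebra. The naive inclusion $U \hookrightarrow M_2(K)$ is not Clifford, since Cayley--Hamilton gives $h^2 = (\textup{tr}\, h)h - \det(h)\, I$, whereas the Clifford relation requires $h\cdot h = \psi[h] = \det h$. The standard fix is the ``doubled'' embedding
\begin{equation*}
    U \hookrightarrow M_4(K), \qquad h \longmapsto \begin{pmatrix} 0 & h \\ h^{\sharp} & 0 \end{pmatrix},
\end{equation*}
where $h^\sharp$ denotes the $2\times 2$ adjugate, so that the square of the image is $\det(h)\cdot I_4 = \psi[h]\cdot I_4$. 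The universal property of Clifford algebras then produces an algebra map $A(U) \to M_4(K)$, and $A^+(U)$ lands in the block-diagonal matrices. A dimension count ($\dim_\mathbb{Q} A^+(U) = 8 = \dim_\mathbb{Q} M_2(K)$) shows that projection to a single diagonal block yields an isomorphism $A^+(U) \xrightarrow{\sim} M_2(K)$. Under this identification the canonical involution $*$ of $A(U)$ translates to the adjugate $\xi \mapsto \xi^{\sharp} = (\det\xi)\xi^{-1}$, so in particular $\xi\xi^{*} = \det\xi$.

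Next I would identify $G^+(U)$ inside $\textup{GL}_2(K) = M_2(K)^\times$. Tracing the conjugation action through these identifications, $\tau(\alpha) h = \alpha h \alpha^{-1}$ becomes $h \mapsto (\det\xi)^{-1}\xi h \bar\xi^t$; the explicit matrix computation uses $\xi^{\sharp} = (\det\xi)\xi^{-1}$ together with $(AB)^\sharp = B^\sharp A^\sharp$. This action automatically preserves Hermiticity, so the remaining condition for $\tau(\alpha)$ to lie in $\textup{SO}^\psi(U)$ is $\det h' = \det h$, which expands to $|\det\xi|^2(\det\xi)^{-2}\det h = \det h$, i.e.\ $\det\xi \in \mathbb{Q}^\times$. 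Hence $\Xi : G^+(U) \to \mathcal{G}$, $\alpha \mapsto \xi$, is a well-defined isomorphism, and the stated identities $\nu(\alpha) = \det\xi$ and $\tau(\alpha) h = (\det\xi)^{-1}\xi h \bar\xi^t$ are immediate from the computations above. The final assertion $\textup{SO}^\psi(U) \cong \mathcal{G}/\mathbb{Q}^\times$ then follows from the standard exact sequence $1 \to \mathbb{Q}^\times \to G^+(U) \xrightarrow{\tau} \textup{SO}^\psi(U) \to 1$ recalled in Section~\ref{preliminaries}.

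The main obstacle is the clean identification $A^+(U) \cong M_2(K)$: one has to verify that projection to a single block is a ring homomorphism on the image of $A^+(U)$ (not merely a linear bijection), and that the canonical involution on $A(U)$ matches the adjugate on $M_2(K)$. Once this compatibility is in place, the remaining content of the proposition reduces to a direct $2\times 2$ matrix calculation.
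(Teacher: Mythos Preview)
The paper does not prove this proposition at all: it is quoted verbatim from Shimura \cite[Proposition 2.2]{shimura2006quadratic}, so there is no in-paper argument to compare against. Your sketch therefore supplies strictly more than the paper does, and the overall strategy---realising $A(U)$ inside $M_4(K)$ via the doubled embedding $h\mapsto\begin{pmatrix}0&h\\h^{\sharp}&0\end{pmatrix}$ and then projecting $A^{+}(U)$ to one diagonal block---is the standard one and is correct in outline.

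There is, however, one genuine gap in your write-up. You assert that the conjugation action becomes $h\mapsto(\det\xi)^{-1}\xi h\bar\xi^{t}$, citing only the identities $\xi^{\sharp}=(\det\xi)\xi^{-1}$ and $(AB)^{\sharp}=B^{\sharp}A^{\sharp}$. But neither of these involves complex conjugation, so they cannot by themselves produce the $\bar\xi^{t}$. The missing step is the relation between the \emph{two} diagonal blocks of the image of $A^{+}(U)$ in $M_4(K)$: if $\alpha\in A^{+}(U)$ has image $\textup{diag}(\xi,\xi')$, then $\xi'=(\bar\xi^{t})^{\sharp}$. (Check this on generators $h_1h_2$: the upper block is $h_1h_2^{\sharp}$, the lower is $h_1^{\sharp}h_2$, and $(\overline{h_1h_2^{\sharp}})^{t}=h_2^{\sharp}h_1$ since each $h_i$ is Hermitian, whence $((\bar\xi^{t})^{\sharp}=h_1^{\sharp}h_2=\xi'$; the relation then extends since the $2\times2$ adjugate is $\mathbb{Q}$-linear.) With this in hand, conjugating $\begin{pmatrix}0&h\\h^{\sharp}&0\end{pmatrix}$ by $\textup{diag}(\xi,\xi')$ gives upper-right block $\xi h(\xi')^{-1}=(\overline{\det\xi})^{-1}\xi h\bar\xi^{t}$; Hermiticity of the result forces $\det\xi\in\mathbb{Q}^{\times}$, and then the formula reads exactly as stated. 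Your final paragraph flags the block projection as the delicate point, but in fact the projection is automatically a ring map on block-diagonal matrices; the real subtlety is this second-block relation, which is where the Hermitian structure of $U$ (and hence $K/\mathbb{Q}$) enters.
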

\section{The Dirichlet Series $D_{\mathcal{Q}}(F,\xi_i;s)$}\label{D F xi i}
In this Section, we will relate $D_{\mathcal{Q}}(F,\xi_i;s)$ of \eqref{original-dirichlet} to $L$-functions. We will first establish Lemma \ref{claim on action of discriminant kernel}, namely that $\widehat{\Gamma}(L_0)$ acts on $L_0\left[\ell, 2^{-1}\mathbb{Z}\right]$ and there is a finite number of orbits under this action. For $\ell \in \mathbb{Z}_{>0}$, we define
\begin{equation*}
    L_0^{+}[\ell, 2^{-1}\mathbb{Z}] := \{x=(x_1,x_2,x_3,x_4) \in L_0[\ell, 2^{-1}\mathbb{Z}] \mid x_1 > 0\}.
\end{equation*}
\begin{equation*}
    L_0^{-}[\ell, 2^{-1}\mathbb{Z}] := \{x=(x_1,x_2,x_3,x_4) \in L_0[\ell, 2^{-1}\mathbb{Z}] \mid x_1 < 0\}.
\end{equation*}
Note that $\phi_0[x] = \ell \implies 2x_1x_{4} - S[x] = 2\ell \implies x_1x_{4}>0$, so $x_4$ has the same sign as $x_1$.\\

We claim that there is an equivalence between $L_0[\ell, 2^{-1}\mathbb{Z}]/\widehat{\Gamma}(L_0)$ and $L_{0}^{+}[\ell, 2^{-1}\mathbb{Z}]/\widehat{\Gamma}^{+}(L_0)$, where we recall that $\widehat{\Gamma}^{+}(L_0)$ is the connected component of the identity inside $\widehat{\Gamma}(L_0)$. First of all
\begin{equation*}
    L_0[\ell, 2^{-1}\mathbb{Z}] = L_0^{+}[\ell, 2^{-1}\mathbb{Z}] \sqcup L_0^{-}[\ell, 2^{-1}\mathbb{Z}].
\end{equation*}
If $\sigma := \textup{diag}(-1, 1,1,-1) \in \widehat{\Gamma}^{+}(L_0)\backslash\widehat{\Gamma}(L_0)$, we have that $\widehat{\Gamma}(L_0) = \widehat{\Gamma}^{+}(L_0) \sqcup \widehat{\Gamma}^{+}(L_0)\sigma$. Hence, the element $\sigma$ interchanges the two sets $L_0^{\pm}[\ell,2^{-1}\mathbb{Z}]$, and therefore, the claim follows.\newline

Moreover, using Lemma \ref{equivalence of solution sets}, Proposition \ref{relation of SL_2 with discriminant kernel}, and the fact that the action of $\textup{SL}_2(\mathcal{O}_K)$ on $\Lambda\left[\ell, 2^{-1}\mathbb{Z}\right]$ factors through $\textup{SL}_2(\mathcal{O}_K)/ \{\pm 1\}$, we have further the identification of $L_0^{+}[\ell, 2^{-1}\mathbb{Z}]/\widehat{\Gamma}^{+}(L_0)$ and $\Lambda^+\left[\ell, 2^{-1}\mathbb{Z}\right]/ \textup{SL}_2(\mathcal{O}_K)$, where $\Lambda^+\left[\ell, 2^{-1}\mathbb{Z}\right] := \Lambda\left[\ell, 2^{-1}\mathbb{Z}\right] \cap U^+$. \newline

The quotient $\Lambda^+\left[\ell, 2^{-1}\mathbb{Z}\right]/ \textup{SL}_2(\mathcal{O}_K)$ is studied in detail by Shimura in \cite{shimura2006quadratic}. In particular, we consider $\xi := \textup{diag}(1, \ell) \in \Lambda^+\left[\ell, 2^{-1}\mathbb{Z}\right]$. We define
\begin{equation}\label{w space}
    W:= \{ u \in U \,\,|\,\, \psi(u,\xi) = 0\},
\end{equation}
and put $H(\xi) := \textup{SO}^{\psi}(W)$, where we understand that $\psi$ is restricted to $W$.\\

We will now introduce localised and adelised groups and lattices, as usual. The reader can refer to \cite[Chapter 8]{Shimura_Euler_Product} to recall the necessary definitions.\\

We have the inclusion of quadratic spaces $(W,\psi) \subset (U,\psi)$, hence the embeddings $A^+(W) \subset A^+(U)$, and $G^+(W) \subset G^+(U)$. For each rational prime $p$, let
\begin{equation}\label{c in u space}
    C_p := \{ g \in \textup{SO}^{\psi}(U)_p \,\,| \,\, g \Lambda_{p}  = \Lambda_{p}\},
\end{equation}
\begin{equation}\label{j}
    J_p := \{ \gamma \in G^+(U)_p \,\,|\,\, \tau(\gamma) \in C_p, \,\, \nu(\gamma) \in \mathbb{Z}_p^{\times}\},
\end{equation}\\
and put $J := G^+(U)_{\infty} \prod_p J_p$ and $C := \textup{SO}^{\psi}(U)_{\infty}\prod_pC_p$.\\

Shimura's work in \cite{shimura2006quadratic} gives the following Proposition.

\begin{proposition}\label{finite-quotients}
There is a bijection
\[
\Lambda^+\left[\ell, 2^{-1}\mathbb{Z}\right]/ \textup{SL}_2(\mathcal{O}_K) \longleftrightarrow G^{+}(W)_{\mathbb{Q}}\backslash G^{+}(W)_{\mathbb{A}}/(G^{+}(W)_{\mathbb{A}} \cap J).
\]    
\end{proposition}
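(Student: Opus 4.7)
The plan is to identify the quotient $\Lambda^+[\ell, 2^{-1}\mathbb{Z}] / \textup{SL}_2(\mathcal{O}_K)$ on the left with the adelic orbits of a chosen base point $\xi = \textup{diag}(1,\ell)$ under the action of a subgroup of $G^+(U)_{\mathbb{A}}$ through $\tau$, and then apply a standard orbit-stabilizer argument to recast this as the double coset space on the right. This mirrors the template used in Shimura's general theory (cf.\ the set \eqref{h class group} in the introduction), but lifted from $\textup{SO}^{\psi}(U)$ to its even Clifford group $G^{+}(U)$.

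First I would reinterpret the action of $\textup{SL}_2(\mathcal{O}_K)$ on $U$ via Proposition \ref{Xi proposition}. For $\gamma \in \textup{SL}_2(\mathcal{O}_K)$ the element $\alpha := \Xi^{-1}(\gamma) \in G^+(U)_{\mathbb{Q}}$ acts on $U$ through $\tau$ as $u \mapsto \gamma u \gamma^*$ (using $\det(\gamma)=1$) and satisfies $\nu(\alpha) = \det(\gamma) = 1$. A direct verification, using the definitions \eqref{c in u space} and \eqref{j} together with the maximality of $\Lambda$, then identifies $\Xi^{-1}(\textup{SL}_2(\mathcal{O}_K))$ as $G^+(U)_{\mathbb{Q}} \cap J$. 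The left-hand side thus becomes the set of orbits of $\Lambda^+[\ell, 2^{-1}\mathbb{Z}]$ under the action of $G^+(U)_{\mathbb{Q}} \cap J$ through $\tau$.

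Next I would invoke the adelic orbit-stabilizer principle with base point $\xi$. Any $u \in \Lambda^+[\ell, 2^{-1}\mathbb{Z}]$ has the same norm and co-divisor as $\xi$, so Witt's theorem and Shimura's transitivity result for maximal lattices on sets $L[q,\mathfrak{b}]$ yield, at each place, an element $g_p \in J_p$ with $\tau(g_p)\xi = u$ (and $g_\infty \in G^+(U)_{\infty}$ achieving this at infinity). The resulting $g \in G^+(U)_{\mathbb{A}}$ is well-defined up to right multiplication by the stabilizer $\textup{Stab}_{G^+(U)_{\mathbb{A}}}(\xi)$. A Clifford algebra computation using the orthogonal decomposition $U = W \oplus \mathbb{Q}\xi$ and the induced splitting $A^+(U) = A^+(W) \oplus A^-(W)\cdot \xi$, together with the fact that $\xi$ anti-commutes with every element of $W$, shows that this stabilizer is exactly $G^+(W)_{\mathbb{A}}$: writing $\alpha = \alpha_1 + \alpha_2 \xi$ with $\alpha_1 \in A^+(W)$, $\alpha_2 \in A^-(W)$, the relation $\alpha \xi = \xi \alpha$ forces $2\psi[\xi]\alpha_2 = 0$, hence $\alpha_2 = 0$. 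Combining these two steps yields the claimed bijection.

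The main obstacle I foresee is verifying the local transitivity of $\tau(J_p)$ on $\Lambda^+_p[\ell, 2^{-1}\mathbb{Z}_p]$ at every prime, in particular at the ramified and dyadic primes, so that the global orbits on the left really are in bijection with the adelic double cosets on the right. For this I would lean on Shimura's detailed treatment of maximal lattices in \cite[Chapter II and Section 22.11]{shimura2004arithmetic}, where exactly this local transitivity on sets of the form $L[q, \mathfrak{b}]$ is established; the hypotheses imposed on $K$ and $\ell$ in the main theorem are what is needed to guarantee that the local picture behaves uniformly and that $\xi$ can serve as a single base point across all orbits.
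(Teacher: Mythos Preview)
Your outline follows the same template as the argument of Shimura that the paper simply cites (namely \cite[Theorem 2.10]{shimura2006quadratic} together with \cite[Theorem 1.6]{shimura2006quadratic}), so in spirit you are reconstructing the cited proof rather than giving a genuinely different one.

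There is, however, a genuine gap in your first step. The group $G^{+}(U)_{\mathbb{Q}} \cap J$ does \emph{not} correspond under $\Xi$ to $\textup{SL}_2(\mathcal{O}_K)$: the condition $\nu(\alpha) \in \mathbb{Z}_p^{\times}$ for all $p$ only forces $\nu(\alpha)=\det\Xi(\alpha)\in\{\pm 1\}$, so what you actually get is $\Delta := \{\gamma \in \textup{GL}_2(\mathcal{O}_K) \mid \det\gamma = \pm 1\}$. Correspondingly, the adelic orbit-stabilizer argument you sketch naturally produces a bijection between $\Lambda\left[\ell, 2^{-1}\mathbb{Z}\right]/\Delta$ (with no positivity restriction) and the double coset space on the right. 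The paper handles this discrepancy by a separate preliminary step, citing \cite[p.~500]{shimura2006quadratic} for the equivalence $\Lambda^+\left[\ell, 2^{-1}\mathbb{Z}\right]/\textup{SL}_2(\mathcal{O}_K) \longleftrightarrow \Lambda\left[\ell, 2^{-1}\mathbb{Z}\right]/\Delta$. You would need to insert this passage as well.

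Your final remark is also off: the hypotheses on $K$ and $\ell$ from the main theorem play no role in this proposition. The local transitivity on $\Lambda_p[\ell,2^{-1}\mathbb{Z}_p]$ that you worry about is a general feature of maximal lattices (it is what underlies Shimura's Theorem~1.6 in \cite{shimura2006quadratic}) and requires no restriction on $K$ or $\ell$; the only arithmetic input the paper records is that the class number of $\mathbb{Q}$ is $1$, which is what ensures the genus of $\xi$ is a single $G^{+}(U)_{\mathbb{Q}}\cdot J$-orbit so that the double coset space on the right captures all classes. The conditions on $K$ and $\ell$ enter the paper only later, in Proposition~\ref{stabilisers of lattices}.
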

In particular, the set on the right-hand side is finite. 
\begin{proof} 
First of all, there is an equivalence of the sets $\Lambda^+\left[\ell, 2^{-1}\mathbb{Z}\right]/ \textup{SL}_2(\mathcal{O}_K)$ and $\Lambda\left[\ell, 2^{-1}\mathbb{Z}\right]/ \Delta$, where $\Delta := \{\alpha \in \textup{GL}_2(\mathcal{O}_K) \mid \det (\alpha) = \pm 1\}$, as is shown in \cite[p. 500]{shimura2006quadratic}. Moreover, from the proof of \cite[Theorem 2.10]{shimura2006quadratic}, the set $\Lambda[\ell, 2^{-1}\mathbb{Z}]/\Delta$ is in bijection with $G^{+}(W)_{\mathbb{Q}}\backslash G^{+}(W)_{\mathbb{A}}/(G^{+}(W)_{\mathbb{A}} \cap J)$, because the class number of $\mathbb{Q}$ is $1$.
\end{proof}
Therefore, Lemma \ref{claim on action of discriminant kernel} follows from the above discussion and this Proposition.\\

In order to now understand the Dirichlet series $D_{F,\xi_i, \mathcal{P}}(s)$, we will work back in the quadratic space $(V_0, \phi_0)$. Note that $f_{\omega}$ of Lemma \ref{isomorphism} induces an isomorphism between the Clifford algebras $A(U)$ and $A(V_0)$, which then gives an isomorphism of the Clifford groups $G^{+}(U)$ and $G^{+}(V_0)$. Moreover, $g_{\omega}$ of \eqref{g_omega} is naturally extended to an isomorphism between $\textup{SO}^{\psi}(U)_{\mathbb{A}}$ and $\textup{SO}^{\phi_0}(V_0)_{\mathbb{A}}$. We set 
\begin{equation*}
    \widetilde{W}:= f_{\omega}(W) =  \{x \in V_0 \mid \phi_0(x, f_{\omega}(\xi)) = 0\},
\end{equation*}
\begin{equation*}
    \widetilde{H}(\xi) := g_{\omega}(H(\xi)) = \textup{SO}^{\phi_0}(\widetilde{W}),
\end{equation*}
\begin{equation*}
    \widetilde{C}_p := g_{\omega}(C_p) = \{g \in \textup{SO}^{\phi_0}(V_0)_p \mid gL_{0,p} = L_{0,p}\},
\end{equation*}
\begin{equation*}
    \widetilde{J}_p := f_{\omega}(J_p) = \{\gamma \in G^{+}(V_0)_p \mid \tau(\gamma) \in \widetilde{C}_p, \ \nu(\gamma) \in \mathbb{Z}_p^{\times}\}, 
\end{equation*}
\\
and put $\widetilde{J} := G^+(V_0)_{\infty} \prod_p \widetilde{J}_p$ and $\widetilde{C} := \textup{SO}^{\phi_0}(V_0)_{\infty}\prod_p\widetilde{C}_p$.\\

We now have the following Proposition.
\begin{proposition}\label{commutative diagram 1}
    We work in the quadratic space $(V_0, \phi_0)$. We have the following commutative diagram, where the vertical arrows are bijections.
    \begin{equation*}
    \begin{tikzcd}
    L_0[\ell, 2^{-1}\mathbb{Z}]/\widehat{\Gamma}(L_0) \arrow{r}{\iota} & L_0[\ell, 2^{-1}\mathbb{Z}]/\Gamma(L_0) \\%
     G^{+}(\widetilde{W})_{\mathbb{Q}}\backslash G^{+}(\widetilde{W})_{\mathbb{A}}/(G^{+}(\widetilde{W})_{\mathbb{A}} \cap \widetilde{J})  \arrow{u}{\kappa} \arrow{r}{\tau}& \widetilde{H}(\xi)_{\mathbb{Q}}\backslash \widetilde{H}(\xi)_{\mathbb{A}}/(\widetilde{H}(\xi)_{\mathbb{A}} \cap \widetilde{C}) \arrow{u}{\lambda}
    \end{tikzcd}
    \end{equation*}
    Here, $\iota$ maps $\widehat{\Gamma}(L_0)v \longmapsto \Gamma(L_0)v$ for all $v \in L_0[\ell, 2^{-1}\mathbb{Z}]$ and $\kappa, \lambda$ are the bijections defined by Shimura in \cite[Theorem 1.6, (ii)]{shimura2006quadratic} and \cite[Theorem 2.2, (ii)]{shimura_diophantine_2006}, respectively.
\end{proposition}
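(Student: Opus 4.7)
My plan is to verify that all four arrows are well-defined, invoke Shimura for the bijectivity of $\kappa$ and $\lambda$, and then establish commutativity by tracing through Shimura's explicit constructions of these two bijections.

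First, I would check well-definedness of the horizontal arrows. For $\iota$: by definition \eqref{discriminant kernel_2} we have $\widehat{\Gamma}(L_0)\subseteq\Gamma(L_0)$, and any $\gamma\in\Gamma(L_0)$ preserves both $\phi_0[\cdot]$ and $L_0$, hence preserves the set $L_0[\ell,2^{-1}\mathbb{Z}]$, so $\iota$ is the natural projection of orbit spaces. For the bottom $\tau$: the homomorphism of \eqref{tau map} restricts to a surjection $G^{+}(\widetilde{W})\twoheadrightarrow\widetilde{H}(\xi)=\textup{SO}^{\phi_0}(\widetilde{W})$ with kernel $\mathbb{Q}^{\times}$, and adelises. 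The definition of $\widetilde{J}_p$ (transported from \eqref{j} via $f_{\omega}$) was tailored precisely so that $\tau(\widetilde{J}_p)\subseteq\widetilde{C}_p$ for every $p$, so the induced map between double-coset spaces is well-defined. The bijectivity of $\kappa$ is \cite[Theorem 1.6, (ii)]{shimura2006quadratic} applied to $(V_0,\phi_0)$, $L_0$, and the reference element $f_{\omega}(\xi)\in L_0[\ell,2^{-1}\mathbb{Z}]$; the bijectivity of $\lambda$ is \cite[Theorem 2.2, (ii)]{shimura_diophantine_2006} applied to the same data.

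The main step is commutativity, which I would establish as follows. Both bijections are built by the same strong-approximation recipe. Given a class $[g]$ in the bottom-left set, one selects a representative adele $g\in G^{+}(\widetilde{W})_{\mathbb{A}}$ and, using approximation in $G^{+}(V_0)$, writes $g=\alpha\cdot c$ with $\alpha\in G^{+}(V_0)_{\mathbb{Q}}$ and $c\in G^{+}(V_0)_{\mathbb{A}}\cap\widetilde{J}$; Shimura then sets $\kappa([g]):=\widehat{\Gamma}(L_0)\cdot\tau(\alpha)^{-1}f_{\omega}(\xi)$. The bijection $\lambda$ follows the analogous recipe on the orthogonal side, but now the rational approximation of $\tau(g)\in\widetilde{H}(\xi)_{\mathbb{A}}$ can be chosen to be $\tau(\alpha)$ itself (since $\tau(c)\in\widetilde{C}$ by the inclusion $\tau(\widetilde{J})\subseteq\widetilde{C}$), giving $\lambda(\tau([g]))=\Gamma(L_0)\cdot\tau(\alpha)^{-1}f_{\omega}(\xi)$. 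Thus both routes of the diagram send $[g]$ to the same $\Gamma(L_0)$-orbit, and commutativity follows.

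The main obstacle will be the careful unwinding of Shimura's constructions with enough precision to confirm that the discriminant kernel $\widehat{\Gamma}(L_0)$—rather than the full $\Gamma(L_0)$—is the correct ambiguity on the upper-left side, and that this matches exactly with the Clifford/orthogonal distinction between the two rows of the adelic side. This is not a computational difficulty but a bookkeeping one: it is precisely this compatibility of stabilisers that underlies Shimura's refined theorem in \cite{shimura2006quadratic} compared to the coarser one in \cite{shimura_diophantine_2006}, and recognising it as such is the conceptual pivot of the argument.
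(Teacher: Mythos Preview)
Your proposal is correct and follows essentially the same approach as the paper: both recall Shimura's explicit recipes for $\kappa$ and $\lambda$ and then verify commutativity by comparing the rational approximants. Your version is slightly more streamlined in that you observe directly that $\tau(\alpha)$ serves as the approximant for $\lambda$ (since $\tau(c)\in\widetilde{C}$), whereas the paper allows an independent approximant $\alpha'$ for $\lambda$ and then checks $\tau(\beta)^{-1}\alpha'\in\Gamma(L_0)$; the two arguments are equivalent.
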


\begin{proof} 
    Let us first recall how $\kappa$ and $\lambda$ are defined. \\
    
    From \cite[Theorem 1.6, (ii)]{shimura2006quadratic}, given $\epsilon \in G^{+}(\widetilde{W})_{\mathbb{A}}$, we take $\alpha \in G^{+}(V_0)$ such that $\epsilon \in \alpha\widetilde{J}$ and set $\kappa(\epsilon) := \widehat{\Gamma}(L_0) \tau(\alpha)^{-1} \xi$.\\
    
    Similarly, from \cite[Theorem 2.2, (ii)]{shimura_diophantine_2006}, given $\epsilon' \in \widetilde{H}(\xi)_{\mathbb{A}}$, we take $\alpha' \in \textup{SO}^{\phi_0}(V_0)$ such that $\epsilon' \in \alpha'\widetilde{C} $ and set $\lambda(\epsilon') := \Gamma(L_0) (\alpha')^{-1} \xi$. \\

    Let now $\epsilon \in G^{+}(\widetilde{W})_{\mathbb{A}}$. On the one hand, we have $\lambda (\tau(\epsilon)) = \Gamma(L_0)\alpha^{-1} \xi$, where $\alpha \in \textup{SO}^{\phi_0}(V_0)$ such that $\tau(\epsilon) \in \alpha \widetilde{C}$. On the other hand, $\iota (\kappa (\epsilon)) = \iota \left(\widehat{\Gamma}(L_0) \tau(\beta)^{-1} \xi\right) = \Gamma(L_0)\tau(\beta)^{-1}\xi$, where $\beta \in G^{+}(V_0)$ such that $\epsilon \in \beta \widetilde{J}$. \\

    We now write $\tau(\epsilon) = \alpha c$, with $c \in \widetilde{C}$ and $\epsilon = \beta j$, with $j \in \widetilde{J}$. Then $\tau(\epsilon) = \tau(\beta)\tau(j)$ and so $\tau(\beta)^{-1}\alpha = \tau(j)c^{-1} \in \widetilde{C}$. Hence $\tau(\beta)^{-1}\alpha \in \textup{SO}^{\phi_0}(V_0) \cap \widetilde{C} = \Gamma(L_0)$, i.e. $\tau(\beta)^{-1} \in \Gamma(L_0)\alpha^{-1}$. It then follows that $\lambda(\tau (\epsilon)) = \iota (\kappa (\epsilon))$, as wanted.
\end{proof}
We are now ready to deal with $D_{\mathcal{Q}}(F,\xi_i;s)$. This is done in \cite[Section 8]{psyroukis_orthogonal} but we need to modify it slightly. \\

We observe that for each $i=1,\cdots, h_1$, the series $D_{\mathcal{Q}}(F,\xi_i;s)$ depends only on the $\Gamma^{+}(L_0)$-equivalence class of $\xi_i$ (because of the invariance of $F$ under $\Gamma_S$, see also \cite[p. 26]{eisenstein_thesis}). Since $\widehat{\Gamma}^+(L_0) \leq \Gamma^{+}(L_0)$, we may think the map $\iota$ of Proposition \ref{commutative diagram 1} as
\[
\iota: L_{0}^{+}\left[\ell, 2^{-1}\mathbb{Z}\right]/\widehat{\Gamma}^+(L_0) \longrightarrow L_{0}^{+}\left[\ell, 2^{-1}\mathbb{Z}\right]/\Gamma^+(L_0).
\]
Therefore, we can write $D(\bm{F}, \bm{P};s)$ as
\begin{equation}\label{expression with iota}
    D(\bm{F}, \bm{P};s) = \sum_{i=1}^{h_{1}}\left(\sum_{\mathfrak{m} \in \textup{Cl}(K)}\zeta_{\mf{m}, \mathcal{Q}}(s-k+3)\zeta_{\mathcal{Q}}(\xi_{\mathfrak{m},i}; s-k+3) \right)D_{\mathcal{Q}}(F,\iota(\xi_i);s),
\end{equation}
Let $\{u_i\}_{i=1}^{h_2}$ denote a set of representatives for $\widetilde{H}(\xi)_{\mathbb{Q}}\backslash \widetilde{H}(\xi)_{\mathbb{A}}/(\widetilde{H}(\xi)_{\mathbb{A}}\cap \widetilde{C})$ such that $u_{i, \infty}=1$ for all $i$. Note that from Proposition \ref{commutative diagram 1}, $h_2 := \#\left(L_0\left[\ell, 2^{-1}\mathbb{Z}\right]/\Gamma(L_0)\right)$ and $h_1 \geq h_2$. Without loss of generality, we re-arrange the $\xi_i's$, so that the $u_i$'s correspond to $\{\xi_i\}_{i=1}^{h_2}$ under the map $\lambda$ of Proposition \ref{commutative diagram 1}.\\

Let now $V(\xi)$ denote the space of $\mathbb{C}$-valued functions on $\widetilde{H}(\xi)_{\mathbb{A}}$, which are left $\widetilde{H}(\xi)_{\mathbb{Q}}$ and right $(\widetilde{H}(\xi)_{\mathbb{A}} \cap \widetilde{C})$-invariant. For each prime $p$ and $i=1,\cdots, h_2$, let 
\begin{equation*}
    M(u_i;\xi)_p := \widetilde{H}(\xi)_p \cap u_i\widetilde{C}u_i^{-1} \textup{ and } M(u_i;\xi)_{f} := \prod_{p}M(u_i;\xi)_p.
\end{equation*}
Define then $e(\xi)_i := \#\{\widetilde{H}(\xi)_{\mathbb{Q}}\cap M(u_i;\xi)_{f}\}$ for $1\leq i\leq h_2$, and $\displaystyle{\mu(\xi) := \sum_{i=1}^{h_2}e(\xi)^{-1}_{i}}$. Furthermore, for any $f \in V(\xi)$, we set
\begin{equation}\label{A_f s}
    A_{f} := \mu(\xi)^{-1}\sum_{i=1}^{h_2} \frac{\overline{f}(u_i)}{e(\xi)_i}A(\xi_i),
\end{equation}
where $A(-)$ denote the Fourier coefficients of $F = F_{\mathcal{O}_K}$, as we have defined them just before Lemma \ref{fourier-coeffs-different-m}. We now let $\{f_{j}\}_{j=1}^{h_2}$ be an orthonormal basis for the space $V(\xi)$, consisting of Hecke eigenforms for the Hecke pairs $(\widetilde{H}(\xi)_p, \widetilde{H}(\xi)_p \cap \widetilde{C})$ for all $p \not \in \mathcal{Q}$. Let also $L_{\mathcal{Q}}(-, s)$ denote the standard $L$-function attached to $F$ or any $f_j$, as defined in \cite[Section 4-1]{sugano} (omitting the $p$-factors for $p \in \mathcal{Q}$). Let us now enlarge the set of primes $\mathcal{Q}$ to include the primes described in \cite[Theorem 8.1]{psyroukis_orthogonal}. 
Following the discussion right after \cite[Theorem 8.1]{psyroukis_orthogonal}, and  taking into account the different class numbers here, we obtain for each $i=1, \cdots, h_1$,
\begin{multline}\label{expression}
D_{\mathcal{Q}}(F,\iota(\xi_i);s)= \zeta_{\mathcal{Q}}\left(2s-2k+4\right)^{-1} L_{\mathcal{Q}}\left(F; s-k+2\right)\times\\\times\sum_{j=1}^{h_2}f_j(\lambda^{-1}(\iota(\xi_i)))L_{\mathcal{Q}}\left(\overline{f_j};s-k+5/2\right)^{-1}A_{f_{j}},
\end{multline}
using the notation of Proposition \ref{commutative diagram 1}. This is \cite[(16)]{psyroukis_orthogonal} tailored to our situation.
\\

If now $\{\zeta_i\}_{i=1}^{h_1}$ are representatives for $G^{+}(\widetilde{W})_{\mathbb{Q}}\backslash G^{+}(\widetilde{W})_{\mathbb{A}}/(G^{+}(\widetilde{W})_{\mathbb{A}} \cap \widetilde{J})$, such that $\kappa^{-1}(\xi_i) = \zeta_i$, we have again from Proposition \ref{commutative diagram 1}, that $\lambda^{-1}(\iota(\xi_i)) = \tau(\zeta_i)$. Therefore, we can write \eqref{expression} as 
\begin{multline}\label{expression3}
D_{\mathcal{Q}}(F,\iota(\xi_i);s)= \zeta_{\mathcal{Q}}\left(2s-2k+4\right)^{-1} L_{\mathcal{Q}}\left(F; s-k+2\right)\times\\\times \sum_{j=1}^{h_2}f_j(\tau(\zeta_i))L_{\mathcal{Q}}\left(\overline{f_j};s-k+5/2\right)^{-1}A_{f_{j}}.
\end{multline}
\section{Quaternion Algebras}\label{quaternion algebras}
Now that we have an understanding of the Dirichlet series $D_{\mathcal{Q}}(F,\iota(\xi_i);s)$, our next step is to understand the zeta functions $\zeta_{\mathcal{Q}}(\xi_{\mf{m},i};s)$ of \eqref{zeta xi i m definition}. Our goal is to relate them to a partial zeta function in a quaternion algebra. To that end, we need several preparations.  The quadratic space $(W,\psi)$ is three-dimensional and, as a result, its even Clifford algebra can be identified with a quaternion algebra, which in our case is a division algebra. Before making this explicit, let us recall some basic facts on quaternion algebras. We mainly follow \cite{voight}. \\

Let $B$ be a quaternion algebra over $\mathbb{Q}$ with a standard involution $a \longmapsto \overline {a}$. A $\mathbb{Z}$-order $\mathcal{O} \subset B$ is a $\mathbb{Z}$-lattice that is also a subring of $\mathbb{Z}$. For any $\mathbb{Z}$-lattice $I \subset B$, we define
\begin{equation*}
    \mathcal{O}_{L}(I) := \{\alpha \in B \mid \alpha I \subseteq I\}, \,\, \mathcal{O}_{R}(I):= \{\alpha \in B \mid I \alpha \subseteq \alpha\}.
\end{equation*}
These are orders in $B$ (\cite[Lemma 10.2.7]{voight}), and are called the left and right order of $I$, respectively. We say $I$ is right-invertible if there exists a lattice $I' \subset B$ such that $II' = \mathcal{O}_L(I)$. Similarly, we define the notion of a left-invertible lattice, and we say $I$ is invertible if there is a two-sided inverse $I' \subset B$, so that
\begin{equation*}
    II' = \mathcal{O}_L(I) = \mathcal{O}_{R}(I'),\,\, I'I = \mathcal{O}_L(I') = \mathcal{O}_{R}(I). 
\end{equation*}
Now, if $\alpha \in B$, we define its reduced norm as $\textup{nrd}(\alpha):= \alpha\overline{\alpha}$. We define the reduced norm $\textup{nrd}(I)$ of a $\mathbb{Z}$-lattice $I$, as the $\mathbb{Z}$-submodule of $\mathbb{Q}$ generated by $\{\textup{nrd}(\alpha) \mid \alpha \in I\}$. This is a fractional ideal of $\mathbb{Q}$ (\cite[Lemma 16.3.2]{voight}).\\

Given now a $\mathbb{Z}$-order $\mathcal{O} \subset B$, we say that $I$ is a left-fractional $\mathcal{O}$-ideal if $\mathcal{O} \subseteq \mathcal{O}_L(I)$; similarly on the right.\\

Two $\mathbb{Z}$-lattices $I$ and $J$ are said to be in the same right class and write $I \sim_{R} J$, if $\exists \alpha \in B^{\times}$ such that $\alpha I = J$; similarly for the left. The relation $\sim_{R}$ defines an equivalence relation on the set of $\mathbb{Z}$-lattices in $B$, and we denote the equivalence class of a $\mathbb{Z}$-lattice $I$ by $[I]_{R}$. The (right) class set of $\mathcal{O}$ is
\begin{equation*}
    \textup{Cls}_{R}(\mathcal{O}) := \left\{[I]_{R} : I \textup{ an invertible right fractional-}\mathcal{O} \textup{ ideal}\right\}.
\end{equation*}
We say two orders $\mathcal{O}, \mathcal{O'}$ are of the same type, if $\exists \alpha \in B^{\times}$ such that $\mathcal{O}' = \alpha^{-1}\mathcal{O}\alpha$. Note that this is true if and only if $\mathcal{O}$ and $\mathcal{O}'$ are isomorphic as $\mathbb{Z}$-algebras. Moreover, we say that $\mathcal{O}$ and $\mathcal{O}'$ are locally isomorphic if the localisations $\mathcal{O}_p$ and $\mathcal{O}'_p$ are of the same type for all rational primes $p$. Here, we define the localisation of a $\mathbb{Z}$-lattice in $B$ as usual (see, for example, \cite[Section 9.4]{voight}).\\

We define the genus of a $\mathbb{Z}$-order $\mathcal{O} \subset B$ to be the set of all $\mathbb{Z}$-orders in $B$ which are locally isomorphic to $\mathcal{O}$. The type set of $\mathcal{O}$ is the set of isomorphism classes of orders in the genus of $\mathcal{O}$. If \begin{equation*}
    X := B_{\infty}^{\times}\prod_{p}\mathcal{O}_p^{\times},
\end{equation*}
we can then identify the $B^{\times}$-classes in the genus of $\mathcal{O}$ with the set $B^{\times} \backslash B^{\times}_{\mathbb{A}}/ X$ (\cite[Section 1.8]{shimura_diophantine_2006}). Moreover, if $N_{B}(X) := \{\alpha \in B_{\mathbb{A}}^{\times} \mid \alpha \mathcal{O}\alpha^{-1} = \mathcal{O}\}$, the type set of $\mathcal{O}$ is identified with $B^{\times} \backslash B^{\times}_{\mathbb{A}}/ N_B(X)$ (see \cite[(2.9.2)]{thesis_hein}).\\

Let us now define the quaternion algebra $B:= K + \epsilon K$ with $\epsilon^2 = -\ell$ and $\epsilon a = \overline{a}\epsilon$ for all $a \in K$. Let also $\mathfrak{o} := \mathcal{O}_K + \epsilon \mathcal{O}_K$ denote an order in $B$. From the proof of \cite[Theorem 2.10]{shimura2006quadratic}, we have $\Xi(A^{+}(W)) = B$ and $\Xi(G^{+}(W)) = B^{\times}$, where $\Xi$ is the map of Proposition \ref{Xi proposition}. Moreover, from the same Theorem, $\Xi(G^{+}(W)_p \cap J_p) = \mathfrak{o}_p^{\times}$ for all $p$. \\

In this case, $X := B_{\infty}^{\times}\prod_{p}\mathfrak{o}_p^{\times}$. We can then identify $G^{+}(W)_{\mathbb{Q}}\backslash G^{+}(W)_{\mathbb{A}}/(G^{+}(W)_{\mathbb{A}} \cap J)$ with $B^{\times} \backslash B^{\times}_{\mathbb{A}}/ X$ via the map $\Xi$.

Because of Proposition \ref{commutative diagram 1}, we also want to identify the the quotient $H(\xi)_{\mathbb{Q}}\backslash H(\xi)_{\mathbb{A}}/(H(\xi)_{\mathbb{A}} \cap C)$ in the quaternion algebra $B$. To this end, we will turn back to the quadratic space $(V_0, \phi_0)$. We have the following Proposition.
\begin{proposition}\label{stabilisers of lattices}
    Let $D := \{x \in \widetilde{H}(\xi)_\mathbb{A}\mid x(L_0 \cap \widetilde{W}) = L_0 \cap \widetilde{W}\}$. Then,  $\widetilde{H}(\xi)_{\mathbb{A}} \cap \widetilde{C} = D$, at least in the following cases:
    \begin{itemize}
        \item $m \equiv 3 \pmod 4$, $\ell$ is square-free, and $\textup{gcd}(\ell,m)=1$.
        \item $m \equiv 1 \pmod 4$, $\ell$ is square-free, $\textup{gcd}(\ell, m)=1$, and $\ell \equiv 1 \pmod4$.
    \end{itemize}
\end{proposition}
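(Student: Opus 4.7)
The inclusion $\widetilde{H}(\xi)_{\mathbb{A}}\cap\widetilde{C}\subseteq D$ is immediate, since any adele preserving $L_{0,p}$ at every prime preserves the sublattice $L_{0,p}\cap\widetilde{W}_p$. The content is the reverse inclusion, and it is a local statement, so I would fix a prime $p$ and show $D_p\subseteq \widetilde{H}(\xi)_p\cap\widetilde{C}_p$. Write $L_{W,p}:=(L_0\cap\widetilde{W})_p$ and $L_{\xi,p}:=L_{0,p}\cap\mathbb{Q}_p f_\omega(\xi)=\mathbb{Z}_p f_\omega(\xi)$. From the orthogonal decomposition $V_{0,p}=\widetilde{W}_p\perp\mathbb{Q}_p f_\omega(\xi)$ and an elementary divisor computation, the sublattice $L_{W,p}+L_{\xi,p}$ sits inside $L_{0,p}$ with cyclic quotient of order $p^{v_p(2\ell)}$, generated by the class of $e_4$. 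Any $g\in \widetilde{H}(\xi)_p$ fixes $f_\omega(\xi)$ and, by assumption, also $L_{W,p}$, so it preserves $L_{W,p}+L_{\xi,p}$; hence $g$ preserves $L_{0,p}$ iff its induced action on the cyclic quotient is trivial. Decomposing $e_4 = -\tfrac{1}{2\ell}f_1 + \tfrac{1}{2\ell}f_\omega(\xi)$ with $f_1=e_1-\ell e_4$, this amounts to $g\pi_W(e_4)\equiv \pi_W(e_4)\pmod{L_{W,p}}$, i.e.\ to the induced action of $g$ on the discriminant group $L_{W,p}^\ast/L_{W,p}$ fixing a specific class.

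The natural tool for controlling this action is the quaternion algebra $B=K+\epsilon K$ with $\epsilon^2=-\ell$ of Section~\ref{quaternion algebras}. Via Proposition~\ref{Xi proposition}, $\Xi:G^+(W)\xrightarrow{\sim} B^\times$ descends to $H(\xi)\cong B^\times/\mathbb{Q}^\times$, and the identification $\Xi(G^+(W)_p\cap J_p)=\mathfrak{o}_p^\times$ recalled at the end of Section~\ref{quaternion algebras} already identifies $\widetilde{H}(\xi)_p\cap \widetilde{C}_p$ with $\mathfrak{o}_p^\times/\mathbb{Z}_p^\times$, where $\mathfrak{o}=\mathcal{O}_K+\epsilon\mathcal{O}_K$. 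In the same language $D_p$ is, modulo $\mathbb{Q}_p^\times$, the stabilizer of the lattice $L_{W,p}$ under the $B_p^\times$-action coming from $\tau$, and this stabilizer is the unit group of the left order of the corresponding quaternion lattice. The Proposition is therefore equivalent to showing, under the stated hypotheses, that $\mathfrak{o}_p$ coincides with this left order at every rational prime.

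At primes $p\nmid 2\ell m$ the algebra $B_p$ is split and $\mathfrak{o}_p$ is already maximal, so the claim is automatic. At odd primes dividing exactly one of $\ell$ or $m$, which is the only possibility given $(\ell,m)=1$ and $\ell$ squarefree, the algebra $B_p$ is ramified with a unique maximal $\mathbb{Z}_p$-order, and a direct check using Shimura's isomorphism $A^+(W)\cong B$ shows that $\mathfrak{o}_p$ is that maximal order; its unit group exhausts the stabilizer of any full-rank $\mathbb{Z}_p$-lattice, so $D_p=\widetilde{H}(\xi)_p\cap \widetilde{C}_p$ at these primes.

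The main obstacle — and the source of the extra parity hypothesis — is $p=2$, where $\mathcal{O}_{K,2}$ may itself fail to be $2$-maximal inside $B_2$. When $m\equiv 3\pmod 4$ one has $\mathcal{O}_K=\mathbb{Z}[(1+\sqrt{-m})/2]$, which is $2$-maximal, and a direct Jordan decomposition of $L_{W,2}$ identifies $\mathfrak{o}_2$ as the maximal order of $B_2$; when $m\equiv 1\pmod 4$ one has $\mathcal{O}_K=\mathbb{Z}[\sqrt{-m}]$, and $\mathfrak{o}_2$ is maximal only under the additional condition $\ell\equiv 1\pmod 4$, as one verifies from the local invariants tabulated in \cite[Ch.~15]{voight}. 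The plan is to write $L_{W,2}$ and $\mathfrak{o}_2$ in matching explicit presentations and to match stabilizers in each residue case. This is also where the excluded field $K=\mathbb{Q}(\sqrt{-2})$ appears: there $\mathfrak{o}_2$ drops to a non-maximal Eichler order and the two stabilizers fail to agree, which is the reason it is singled out by the authors.
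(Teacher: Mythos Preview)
Your opening reduction is sound: the inclusion $\widetilde{H}(\xi)_{\mathbb{A}}\cap\widetilde{C}\subseteq D$ is immediate, the problem is local, and the obstruction at each $p$ lives in the cyclic glue group $L_{0,p}/(L_{W,p}+\mathbb{Z}_p f_\omega(\xi))$ of order $p^{v_p(2\ell)}$. The gap is in the subsequent quaternion translation. The identity $\Xi(G^+(W)_p\cap J_p)=\mathfrak{o}_p^\times$ does not by itself yield $\widetilde{H}(\xi)_p\cap\widetilde{C}_p\cong\mathfrak{o}_p^\times/\mathbb{Z}_p^\times$: descending along $\tau$ requires that every $h\in H(\xi)_p\cap C_p$ lift to some $\gamma$ with $\nu(\gamma)\in\mathbb{Z}_p^\times$, a spinor-norm condition you have not verified and which is close to what is being proved. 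Likewise, the preimage of $D_p$ in $B_p^\times$ is the \emph{normalizer} of $L_{W,p}$ under the conjugation action of $A^+(W_p)^\times$ on $W_p\subset A(W_p)$, not the unit group of the left order of a $B_p$-lattice; the bridge between this normalizer and the units of the order $A^+(L_{W,p})$ is exactly the missing step, and without it the maximality analysis of $\mathfrak{o}_p$ at the ramified primes does not conclude the argument.

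The paper's route is quite different and avoids the quaternion algebra here. One writes the Gram matrix $T=\tfrac{1}{2}\,\textup{diag}(-2\ell,-S)$ of $(\widetilde{W},\phi_0)$ on $M=\mathbb{Z}^3$ and shows $M$ is $\mathbb{Z}$-maximal under the stated hypotheses: for $m\equiv3\pmod4$ this follows from $\det(2T)/2$ being squarefree, and for $m\equiv1\pmod4$ one checks directly that $x\in M^{*}$ with $T[x]\in\mathbb{Z}$ forces $x\in M$. Then the index identity $\phi_0[f_\omega(\xi)]\,\phi_0(f_\omega(\xi),L_0)^{-2}=2[M^{*}/M][L_0^{*}/L_0]^{-1}$ combined with a result of Murata shows that $L_0\cap\widetilde{W}$ is itself $\mathbb{Z}$-maximal, after which \cite[Proposition~11.12(iv)]{shimura2004arithmetic} gives $\widetilde{H}(\xi)_p\cap\widetilde{C}_p=D_p$ at every prime in one stroke. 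So the congruence conditions on $\ell$ serve to force maximality of $M$ (particularly at $p=2$), not to control a quaternion order; and the exclusion of $\mathbb{Q}(\sqrt{-2})$ is only because this maximality argument does not directly cover $m\equiv 2\pmod 4$, not because the stabilizers genuinely disagree there --- the paper's own Remark~\ref{m 2 mod 4} says computational evidence suggests that case should go through as well.
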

\begin{proof}
    Because the lattice $L_0$ is taken maximal in $(V_0, \phi_0)$, we have from \cite[Paragraph 6.3]{shimura_classification_quadratic_forms} that $[L_0^{*}/L_0] = \det (S_0) \mathbb{Z}$. Moreover, as in \cite[Lemma 9.1]{psyroukis_orthogonal}, $\widetilde{H}(\xi)$ is represented via the matrix
    \begin{equation*}
        T:= \frac{1}{2}\m{-2\ell \\ & -S},
    \end{equation*}
    with respect to a basis of $M:=\mathbb{Z}^3$. We will now show that under our assumptions on $\ell$ and $m$, $M$ is a maximal $\mathbb{Z}$--lattice. \\
    
    If $m \equiv 3 \pmod 4$, then $\det (2T) = 2\ell \det (S) = 2\ell m$ and because of the conditions on $\ell$ and $m$, $\det (2T)/2$ is square-free. Hence, from \cite[Proposition 1.6.12]{eisenstein_thesis}, the lattice $\mathbb{Z}^3$ is maximal with respect to $(\widetilde{W},\phi_0)$.\\

    If $m \equiv 1 \pmod4$, we will use \cite[Lemma 1.6.5 (ii)]{eisenstein_thesis}. Assume we pick $x = (x_1, x_2, x_3) \in M^{*}$ such that $T[x] \in \mathbb{Z}$. We will show that $x \in M$, which will give us that $M$ is $\mathbb{Z}$--maximal. We write $x_1 = y_1/(2\ell),\ x_2 = y_2/2,\ x_3 = y_3/(2m)$, where $y_i \in \mathbb{Z}$. Then, the condition $T[x] \in \mathbb{Z}$ is equivalent to $4\ell m \mid my_1^2+\ell my_2^2+\ell y_3^2$. Now, by the assumptions, $\ell$ and $4m$ are coprime. Hence, we can look at each divisibility separately.
    \begin{itemize}
        \item $\ell \mid my_1^2+\ell my_2^2+\ell y_3^2$. This implies $\ell \mid my_1^2$, hence $\ell \mid y_1$, because $\gcd (\ell,m)=1$ and $\ell$ square--free.
        \item $4m \mid my_1^2+\ell my_2^2+\ell y_3^2$. First, this implies $m \mid \ell y_3^2$, so $m \mid y_3$, because $\gcd (\ell,m)=1$ and $m$ square--free. By now writing $y_3 = mz_3$, the divisibility is equivalent to $4 \mid y_1^2+\ell y_2^2+\ell mz_3^2$. But since $\ell, m \equiv 1 \pmod 4$, we have $4 \mid y_1^2+y_2^2+z_3^2$, which implies all of them must be even.
    \end{itemize}
    These conditions imply that $x_i \in \mathbb{Z}$ for all $i=1,2,3$, as wanted.\\
    
    Now, again from \cite[Paragraph 6.3]{shimura_classification_quadratic_forms}, we have $[M^{*}/M] = \det (2T) \mathbb{Z} = 2\ell\det (S)\mathbb{Z}$. Since $\phi_0[f_{\omega}(\xi)] = \ell$ and $2\phi_0(f_{\omega}(\xi), L_0) = \mathbb{Z}$, we have $\phi_0[f_{\omega}(\xi)]\phi_0(f_{\omega}(\xi), L_0)^{-2} = 4\ell\mathbb{Z}$ and this equals $2[M^{*}/M][L_0^{*}/L_0]^{-1}$. Hence, from \cite[Corollary 4.3]{murata_preprint}, we obtain $L_0 \cap \widetilde{W}$ is $\mathbb{Z}$-maximal in $\widetilde{W}$. \\

    This now means that $(L_0 \cap \widetilde{W})_{p}$ is maximal in $\widetilde{W}_p$ for all primes $p$. Hence, from \cite[Proposition 11.12, (iv)]{shimura2004arithmetic}, we obtain $\widetilde{H}(\xi)_{p} \cap \widetilde{C} = D_p$ for all primes $p$, as $t_p$ is always even in our case. Here, $D_p := D \cap \widetilde{H}(\xi)_p$. Hence, similarly to \cite[Proposition 9.3]{psyroukis_orthogonal}, we obtain $D = \widetilde{H}(\xi)_{\mathbb{A}} \cap \widetilde{C}$, as required.
\end{proof}
\begin{remark}\label{m 2 mod 4}
    We believe that the case $m \equiv 2 \pmod 4$ can also be included, for appropriate choices of $\ell$. We have obtained computational evidence which suggests this, and we hope to relax this restriction in the near future.
\end{remark}

We now have a surjective map (see \cite[Lemma 17.4.13]{voight})
\begin{align*}
    \rho: \textup{Cls} \mathfrak{o} &\longrightarrow \textup{Typ} \mathfrak o\\
    [I] &\longmapsto [\mathcal{O}_L(I)].
\end{align*}
Because of the identifications we mentioned at the beginning of the Section, we can also think of this map as $\rho: B^{\times} \backslash B^{\times}_{\mathbb{A}}/ X \longrightarrow B^{\times} \backslash B^{\times}_{\mathbb{A}}/ N_B(X)$. We then have the following Proposition.
\begin{proposition}\label{diagram quaternion algebra}
Assume $m$ and $\ell$ are as in Proposition \ref{stabilisers of lattices}. With the notation as above, we have the following commutative diagram, where the horizontal arrows are bijections:
\[ \begin{tikzcd}
G^{+}(W)_{\mathbb{Q}}\backslash G^{+}(W)_{\mathbb{A}}/(G^{+}(W)_{\mathbb{A}} \cap J)  \arrow{d}{\tau}\arrow{r}{\Xi}   & B^{\times} \backslash B^{\times}_{\mathbb{A}}/ X  \arrow[swap]{d}{\rho}\\
H(\xi)_{\mathbb{Q}}\backslash H(\xi)_{\mathbb{A}}/(H(\xi)_{\mathbb{A}} \cap C) \arrow{r}{A^{+}}& B^{\times} \backslash B^{\times}_{\mathbb{A}}/ N_B(X)
\end{tikzcd}
\]
Here, for any lattice $N$, $A^{+}(N)$ is given in as in Section \ref{preliminaries}.
\end{proposition}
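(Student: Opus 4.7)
The plan is to reduce both claims to Proposition \ref{Xi proposition} together with the functoriality of the even Clifford algebra, with the real content concentrated in the single identity $A^{+}(L_0 \cap \widetilde{W}) = \mathfrak{o}$.

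For the top bijection, Proposition \ref{Xi proposition} applied to the three-dimensional quadratic space $(W,\psi)$ gives a group isomorphism $\Xi\colon G^{+}(W) \xrightarrow{\sim} B^{\times}$ whose adelic version sends $G^{+}(W)_{\mathbb{A}} \cap J$ onto $X$, by the identity $\Xi(G^{+}(W)_p \cap J_p) = \mathfrak{o}_p^{\times}$ already recorded in this Section (together with the archimedean identification $G^{+}(W)_{\infty}=B_{\infty}^{\times}$). Passing to double cosets is then automatic.

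For the commutativity, I would first establish
\begin{equation*}
A^{+}(L_0 \cap \widetilde{W}) = \mathfrak{o},
\end{equation*}
and then use the functoriality $A^{+}(\tau(\gamma)(N)) = \gamma A^{+}(N) \gamma^{-1}$ for any lattice $N \subset \widetilde{W}$ and any $\gamma \in G^{+}(\widetilde{W})_{\mathbb{A}}$. Taking $N = L_0 \cap \widetilde{W}$ and translating through $\Xi$ with $\beta := \Xi(\gamma)$ yields
\begin{equation*}
A^{+}\bigl(\tau(\gamma)(L_0 \cap \widetilde{W})\bigr) = \beta \mathfrak{o} \beta^{-1} = \mathcal{O}_{L}(\beta \mathfrak{o}),
\end{equation*}
which matches exactly the composition $\rho \circ \Xi$ on the right-hand column, defining the bottom map $A^{+}$ as $[h] \mapsto [A^{+}(h(L_0 \cap \widetilde{W}))]$.

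For the bottom bijection, I would use that $\tau\colon G^{+}(W) \to H(\xi)$ is surjective with central kernel $\mathbb{Q}^{\times}$, and its adelic version is surjective with kernel $\mathbb{A}^{\times}$ (by Hilbert~90 at each place). The preimage in $B^{\times}_{\mathbb{A}}$ of $H(\xi)_{\mathbb{Q}}$ is then $B^{\times}$, since $\mathbb{Q}^{\times} \subset B^{\times}$, and the preimage of $H(\xi)_{\mathbb{A}} \cap C$ is $\{\beta \in B^{\times}_{\mathbb{A}}\colon \beta \mathfrak{o} \beta^{-1} = \mathfrak{o}\} = N_{B}(X)$, the second equality again reducing to $A^{+}(L_0 \cap \widetilde{W}) = \mathfrak{o}$ via the functoriality above: a stabilizer of $L_0 \cap \widetilde{W}$ corresponds precisely to a conjugation preserving $\mathfrak{o}$.

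I expect the sole technical obstacle to be the verification of $A^{+}(L_0 \cap \widetilde{W}) = \mathfrak{o}$. One must exhibit an explicit $\mathbb{Z}$-basis of $L_0 \cap \widetilde{W}$, compute its image in the Clifford algebra through $\Xi$, and check that the generated $\mathbb{Z}$-order equals $\mathcal{O}_K + \epsilon \mathcal{O}_K$ both globally and locally at every prime. The constraints on $\ell$ and $m$ from Proposition \ref{stabilisers of lattices} are precisely what guarantee that $L_0 \cap \widetilde{W}$ is $\mathbb{Z}$-maximal, and hence that no larger local order is accidentally produced; so this computation is really of a piece with the maximality argument of the previous section. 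Everything else in the diagram is then a formal consequence of Proposition \ref{Xi proposition} and the Clifford-algebra functoriality.
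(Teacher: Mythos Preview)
Your proposal is correct and follows essentially the same route as the paper: both use the group isomorphism $\Xi$ for the top arrow, and both establish commutativity via the functoriality $A^{+}(\tau(\gamma)N)=\gamma A^{+}(N)\gamma^{-1}$ applied to $N=\Lambda\cap W$ together with the key identity $A^{+}(\Lambda\cap W)=\mathfrak{o}$ (under $\Xi$). One difference worth noting: you plan to verify $A^{+}(\Lambda\cap W)=\mathfrak{o}$ by an explicit basis computation, whereas the paper simply invokes Shimura's identity $\Xi(A^{+}(\Lambda)\cap A^{+}(W))=\mathfrak{o}$ and Murata's lemma $A^{+}(\Lambda\cap W)=A^{+}(\Lambda)\cap A^{+}(W)$ (these references appear at the opening of Section~9). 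In fact that identity holds without the hypotheses of Proposition~\ref{stabilisers of lattices}; the place where those hypotheses are genuinely needed is one step earlier, namely to identify $H(\xi)_{\mathbb{A}}\cap C$ with the stabiliser $D$ of $\Lambda\cap W$, so that the bottom-left double coset really parametrises lattice classes. You use this identification implicitly when you write ``the preimage of $H(\xi)_{\mathbb{A}}\cap C$ is $\{\beta:\beta\mathfrak{o}\beta^{-1}=\mathfrak{o}\}$'', so your argument is fine, but the attribution of the hypothesis to the order computation rather than to the stabiliser identification is slightly off.

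Two small further remarks. First, in your bottom-bijection argument the preimage of $H(\xi)_{\mathbb{Q}}$ in $B^{\times}_{\mathbb{A}}$ is $\mathbb{A}^{\times}\cdot B^{\times}$, not $B^{\times}$, since the kernel of the adelic $\tau$ is $\mathbb{A}^{\times}$; this is harmless because $\mathbb{A}^{\times}\subset N_{B}(X)$, so the double cosets are unaffected. Second, the paper's own proof actually addresses only the commutativity and takes the bijectivity of the bottom arrow for granted; your argument via surjectivity of $\tau$ and the identification of preimages supplies what the paper leaves implicit.
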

\begin{proof}
    
    We recall that we can identify $G^{+}(W)$ with $B^{\times}$ through the map $\Xi$. Moreover, through $\Xi$, we can identify $G^{+}(W)_{\mathbb{Q}}\backslash G^{+}(W)_{\mathbb{A}}/(G^{+}(W)_{\mathbb{A}} \cap J)$ with $B^{\times} \backslash B^{\times}_{\mathbb{A}}/ X$.\\
    
    As we have mentioned, this last set is in bijection with the classes in the right genus of $\mathfrak{o}$ (i.e., all the right invertible $\mathfrak{o}$ ideals which are locally isometric to $\mathfrak{o}$). Therefore, if we start with an element $a \in G^{+}(W)_{\mathbb{A}}$, its class in $G^{+}(W)_{\mathbb{Q}}\backslash G^{+}(W)_{\mathbb{A}}/(G^{+}(W)_{\mathbb{A}} \cap J)$ will correspond to the class $[a \mathfrak{o}]$ in $B^{\times} \backslash B^{\times}_{\mathbb{A}}/ X$, where we view $a \in B_{\mathbb{A}}^{\times}$ via $\Xi$. \\
    
    Moreover, the set $B^{\times} \backslash B^{\times}_{\mathbb{A}}/ N_B(X)$ is in bijection with the classes of orders in $B$ which are locally conjugate to $\mathfrak{o}$, i.e., of the same type. Therefore, the map $\rho$ maps $[a\mathfrak{o}] \longmapsto [a\mathfrak{o}a^{-1}]$. \\

    On the other hand, the map $\tau$ will send the class of $a \in G^{+}(W)_{\mathbb{A}}$ to the class of the lattice $[\tau(a)(L\cap W)]$ in $H(\xi)_{\mathbb{Q}}\backslash H(\xi)_{\mathbb{A}}/(H(\xi)_{\mathbb{A}} \cap C)$. Then, via the map $N \longmapsto A^{+}(N)$, we have that 
    \begin{equation*}
        A^{+}(\tau(a)(L\cap W)) = A^{+}(a(L\cap W)a^{-1}) = aA^{+}(L\cap W)a^{-1} = a\mathfrak{o}a^{-1},
    \end{equation*}
    because $\mathfrak{o} = A^{+}(L\cap W)$. This shows that the diagram is indeed commutative.
\end{proof}

\section{Representation Numbers of Binary Hermitian Forms}

The aim of this Section is to relate the expressions $n(\xi_{\mf{m},i};d)$ of equation \eqref{n-set} with $\xi_{\mf{m},i} \in L_{\mf{m},0}^{+} [\ell,2^{-1}\mathbb{Z}]/\widehat{\Gamma}(L_{\mf{m},0})$ to representation numbers of binary Hermitian forms.\\

We recall the map $f_{\omega}: (U,\psi) \longrightarrow (V_0,\phi_0)$ of Lemma \ref{isomorphism}. Since $h_{\mathfrak{m}} \in G_{\mathbb{Q}}^{*}$ and the map $f_{\omega}$ is an isometry, we have that there is an element $ \beta_{\mathfrak{m}} \in \textup{SO}^{\psi}(U)$ such that $g_{\omega}(\beta_{\mathfrak{m}}) = h_{\mathfrak{m}}$, where $g_{\omega}$ is the map \eqref{g_omega}. This means $f_{\omega}\beta_{\mathfrak{{m}}}f_{\omega}^{-1} =h_{\mathfrak{m}}$, or equivalently $f_{\omega}^{-1}h_{\mf{m}}^{-1}v = \beta_{\mathfrak{m}}^{-1}f_{\omega}^{-1}v$ for all $v \in V_0$. Now, since the map $\tau$ is surjective, there is an element $\delta_{\mathfrak{m}} \in G^{+}(U)$ such that $\tau(\delta_{\mathfrak{m}}) = \beta_{\mathfrak{m}}$. From the definition of the spinor norm, we have $\nu(\delta_{\mf{m}})(\mathbb{Q}^{\times})^2 = \sigma(\beta_{\mf{m}}) = \sigma(h_{\mf{m}}) = N(\mf{m})(\mathbb{Q}^{\times})^2$, hence $\nu(\delta_{\mf{m}}) = N(\mf{m})a^2$, with $a \in \mathbb{Q}^{\times}$. Let then $\delta_{\mf{m}}':= a^{-1} \delta_{\mf{m}} \in G^{+}(U)$. Then, we still have $\tau(\delta_{\mf{m}}') = \beta_{\mf{m}}$

and $\nu(\delta_{\mf{m}}') = N(\mf{m})$. If now $\alpha_{\mathfrak{m}} := \Xi(\delta_\mathfrak{m}')$, where $\Xi$ is the map of Proposition \ref{Xi proposition}, with $\alpha_{\mathfrak{m}} \in \mathcal{G}$, then $\Xi((\delta_\mathfrak{m}')^{-1}) = \alpha_{\mathfrak{m}}^{-1}$, and we have
\begin{equation*}
    \beta_{\mathfrak{m}}^{-1} u = \tau((\delta_{\mathfrak{m}}')^{-1}) u = \det (\alpha_{\mathfrak{m}}) \alpha_{\mathfrak{m}}^{-1} u \overline{\alpha_{\mathfrak{m}}}^{-t},
\end{equation*}
for all $u \in U$, using Proposition \ref{Xi proposition}. We have the following Lemma.
\begin{lemma}
    The matrix $\alpha_{\mf{m}}$ above satisfies $\det(\alpha_\mf{m}) = N(\mf{m})$ and 
    \begin{equation*}
        \alpha_{\mf{m}}^{-1} \in \m{\overline{\mf{m}}^{-1} & \mf{m}^{-1} \\ \overline{\mf{m}}^{-1} & \mf{m}^{-1}}.
    \end{equation*}
\end{lemma}
\begin{proof}
The condition on the determinant follows from the fact that $\det (\alpha_{\mf{m}}) = \nu(\delta'_{\mf{m}}) = N(\mf{m})$ where $\delta'_{\mf{m}} \in G^{+}(U)$ is defined above. We now show the claim regarding the entries of the matrix.  \newline
We write $\alpha_{\mathfrak{m}}^{-1} = \m{u & v \\ r & s} \in \textup{GL}_2(K)$.
Since 
\begin{equation*}
    \Lambda_{\mf{m},0} := f^{-1}_{\omega}(L_{\mf{m},0}) = \m{ \mathbb{Z} & L_{\mf{m}} \\ L_{\mf{m}}& \mathbb{Z}},
\end{equation*}
and $L_0 = h_\mf{m}^{-1}L_{\mf{m},0}$, by translating the actions as above, this means $\det (\alpha_{\mf{m}})\alpha_{\mf{m}}^{-1}\Lambda_{\mf{m},0}\overline{\alpha_{m}}^{-t} = \Lambda$. Hence, we must have
\[
\m{u & v \\ r & s} \m{N(\mathfrak{m}) & 0 \\ 0 & 0}\m{\bar{u} & \bar{r} \\ \bar{v} & \bar{s}} = \m{N(u)N(\mathfrak{m}) & u \bar{r}N(\mathfrak{m}) \\ \bar{u} r N(\mathfrak{m}) & N(r)N(\mathfrak{m}) } \in \m{\mathbb{Z} & \mathcal{O}_K \\ \mathcal{O}_K & \mathbb{Z}}.
\]
That is $N(u) \in N(\mathfrak{m})^{-1} \mathbb{Z}$, $u\bar{r} \in N(\mathfrak{m})^{-1} \mathcal{O}_K$ and $N(r) \in N(\mathfrak{m})^{-1} \mathbb{Z}$. \newline

A similar calculation with the matrix $\m{ 0 & 0 \\ 0 & N(\mathfrak{m})}$ gives $N(v),N(s) \in N(\mathfrak{m})^{-1}\mathbb{Z}$ and $v\bar{s} \in N(\mathfrak{m})^{-1} \mathcal{O}_K$

Moreover we calculate (for an $h \in L_{\mf{m}} = \frac{\bar{\mathfrak{m}}^2}{N(\mathfrak{m})}$),
\[
\m{u & v \\ r & s} \m{0 & N(\mathfrak{m})h \\ N(\mathfrak{m})\bar{h} & 0}\m{\bar{u} & \bar{r} \\ \bar{v} & \bar{s}}= N(\mathfrak{m})\m{v \bar{u} \bar{h} + u \bar{v}h & rv\bar{h} + u \bar{s} h \\ * & \bar{r}s \bar{h} + r \bar{s} h} \in \m{\mathbb{Z} & \mathcal{O}_K \\ \mathcal{O}_K & \mathbb{Z}}
\]
That is, $N(\mathfrak{m})Tr(\bar{u}v \bar{h}) \in \mathbb{Z}$, $N(\mathfrak{m}) Tr(r\bar{s} h) \in \mathbb{Z}$
Equivalently, since $N(\mathfrak{m})h \in \bar{\mathfrak{m}}^2$ we have $\bar{u}v \in \mathfrak{d}^{-1}_K \mathfrak{m}^{-2}$ and $r\bar{s} \in \mathfrak{d}_K^{-1} \bar{\mathfrak{m}}^{-2}$. Since we are taking that $\mathfrak{d}_K$ and $\mathfrak{m}$ share no common factors, from the conditions $N(u), N(v) \in N(\mathfrak{m})^{-1} \mathbb{Z}$ and $\bar{u}v \in \mathfrak{d}^{-1}_K \mathfrak{m}^{-2}$ we have that $u \in \bar{\mathfrak{m}}^{-1}$ and $v \in \mathfrak{m}^{-1}$. Similarly we conclude that $r \in \bar{\mathfrak{m}}^{-1}$ and $s \in \mathfrak{m}^{-1}$. That is, the matrix 
\begin{equation*}
    \alpha^{-1}_{\mathfrak{m}} \in \m{\bar{\mathfrak{m}}^{-1} & \mathfrak{m}^{-1} \\ \bar{\mathfrak{m}}^{-1} & \mathfrak{m}^{-1}}.\qedhere
\end{equation*}
\end{proof}

Back to the expressions $n(\xi_{\mf{m},i};d)$ of equation \eqref{n-set}, we recall
\begin{multline*}
n(\xi_{\mathfrak{m},i};d) := \#\left\{s \in \mathbb{Z}^{2}/dS_{\mathfrak{m}}\mathbb{Z}^{2} \mid D_{\mf{m}} \equiv \frac{1}{2}q_{\mathfrak{m}}S_{\mathfrak{m}}^{-1}[s] \pmod{q_{\mathfrak{m}}d}, \right.\\\left.\left(\frac{\frac{1}{2}q_{\mathfrak{m}}S_{\mathfrak{m}}^{-1}[s]-D_{\mf{m}}}{q_{\mathfrak{m}}d}, A_{\mathfrak{m}}S_{\mathfrak{m}}^{-1}s, d\right)^{t} = \gamma \xi_{\mathfrak{m},i}, \gamma \in \widehat{\Gamma}^{+}(L_{\mathfrak{m},0})\right\},
\end{multline*}

where $D_{\mf{m}} = -\ell q_{\mf{m}}$, with $\ell$ a positive integer and $q_{\mf{m}}$ the level of the lattice $L_{\mf{m}}$.\newline

Using Lemma \ref{isomorphisms}, for any $\gamma \in \widehat{\Gamma}^{+}(L_{\mf{m},0})$, we may write $\gamma = h_{\mathfrak{m}}\gamma' h_{\mathfrak{m}}^{-1}$ with $\gamma' \in \widehat{\Gamma}^{+}(L_{0})$,  and so $\gamma \xi_{\mathfrak{m},i} = h_{\mathfrak{m}} \gamma' \xi_i$, since $\xi_{\mf{m},i} = h_{\mf{m}}\xi_i$. Therefore, the above can be written as
\begin{multline}\label{alternative n(xi m i)}
    n(\xi_{\mathfrak{m},i};d) = \#\left\{s \in \mathbb{Z}^{2}/dS_{\mathfrak{m}}\mathbb{Z}^{2} \mid D_{\mf{m}} \equiv \frac{1}{2}q_{\mathfrak{m}}S_{\mathfrak{m}}^{-1}[s] \pmod{q_{\mathfrak{m}}d}, \right.\\\left. h_{\mathfrak{m}}^{-1}\left(\frac{\frac{1}{2}q_{\mathfrak{m}}S_{\mathfrak{m}}^{-1}[s]-D_{\mf{m}}}{q_{\mathfrak{m}}d}, A_{\mathfrak{m}}S_{\mathfrak{m}}^{-1}s, d\right)^{t} = \gamma \xi_{i}, \ \gamma \in \widehat{\Gamma}^{+}(L_{0})\right\}.
\end{multline}

Putting together all the above, and with the matrix $\alpha_\mathfrak{m} \in \textup{GL}_2(K)$ with $\textup{det}(\alpha_{\mathfrak{m}}) = N(\mf{m})$ as above, we have that 
\begin{equation}\label{translation-hermitian-matrices}
f_{\omega}^{-1} \left( h_{\mathfrak{m}}^{-1}\left(\frac{\frac{1}{2}q_{\mathfrak{m}}S_{\mathfrak{m}}^{-1}[s]-D_{\mf{m{}}}}{q_{\mathfrak{m}}d}, A_{\mathfrak{m}}S_{\mathfrak{m}}^{-1}s, d\right)^{t}\right)
=\det(\alpha_{\mathfrak{m}})\alpha^{-1}_{\mathfrak{m}} \m{\frac{\frac{1}{2}q_{\mathfrak{m}}S_{\mathfrak{m}}^{-1}[s]-D_{\mf{m{}}}}{q_{\mathfrak{m}}d} & h \\ \bar{h} & d & } \overline{\alpha_{\mathfrak{m}}}^{-t},
\end{equation}
where $h \in K $ corresponding to the vector $A_{\mathfrak{m}}S_{\mathfrak{m}}^{-1}s$, that is $h=\theta(A_{\mf{m}}S_{\mf{m}}^{-1}s)$, where $\theta:V \longrightarrow K$ is the isometry \eqref{theta map}.\\

From Lemma \ref{equivalence of solution sets} and Proposition \ref{relation of SL_2 with discriminant kernel}, we have seen the identification
\[
\Lambda^+[\ell,2^{-1}\mathbb{Z}]/\textup{SL}_2(\mathcal{O}_K)\longleftrightarrow L_0^+[\ell,2^{-1}\mathbb{Z}]/\widehat{\Gamma}^+(L_0) 
\]
induced by the map $f_\omega$. We set $f_i := f_{\omega}^{-1} (\xi_i) \in \Lambda^+[\ell,2^{-1}\mathbb{Z}]$, for $i=1,\ldots,h_1$. We have the following Lemma.
\begin{lemma} \label{integrality of hermitian matrices}
    Let $\Lambda = M_2(\mathcal{O}_K) \cap U$. We have the following two statements:
    \begin{itemize}
        \item If $\ell \in \mathbb{Z}$, then $h \in \Lambda\left[\ell, 2^{-1}\mathbb{Z}\right] \implies h \in \Lambda$.
        \item If $\ell \in \mathbb{Z}$ square-free and $\gcd(\ell, 2m)=1$, then
        \begin{equation*}
            \{h \in \Lambda \mid \det(h)=\ell\} = \Lambda[\ell, \mathbb{Z}] \cup \Lambda\left[\ell, 2^{-1}\mathbb{Z}\right].
        \end{equation*}
    \end{itemize}
\end{lemma}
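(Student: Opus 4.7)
The plan is to address the two parts separately, using $\mathbb{Z}$-maximality of $\Lambda$ in $(U, \psi)$ for the first and an explicit calculation of the pairing $\psi(h, \Lambda)$ combined with the determinant identity
\[
\det h \;=\; ad - b^2 - bc\,\textup{Tr}_{K/\mathbb{Q}}(\omega) - c^2 N_{K/\mathbb{Q}}(\omega)
\]
for the second, where $h = \m{a & b+\omega c \\ b + \overline{\omega}c & d}$ with $a, b, c, d \in \mathbb{Q}$. Note that $h \in \Lambda$ is equivalent to $a, b, c, d \in \mathbb{Z}$.

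For the first part, given $h \in \Lambda[\ell, 2^{-1}\mathbb{Z}]$, I would consider the $\mathbb{Z}$-lattice $\Lambda' := \mathbb{Z}h + \Lambda \subset U$. For any $v = nh + \lambda$ with $n \in \mathbb{Z}$ and $\lambda \in \Lambda$, one computes
\[
\psi[v] = n^2 \psi[h] + 2n\psi(h, \lambda) + \psi[\lambda] \in \mathbb{Z},
\]
because $\psi[h] = \ell \in \mathbb{Z}$ by hypothesis, $\psi[\lambda] \in \mathbb{Z}$ since $\Lambda$ is $\mathbb{Z}$-integral, and $2\psi(h, \lambda) \in \mathbb{Z}$ since $\psi(h, \Lambda) = 2^{-1}\mathbb{Z}$. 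Thus $\Lambda'$ is $\mathbb{Z}$-integral and contains $\Lambda$, so $\mathbb{Z}$-maximality of $\Lambda$ (recalled just before Lemma \ref{isomorphism}) forces $\Lambda' = \Lambda$, i.e., $h \in \Lambda$. The same argument applies verbatim if $\psi(h, \Lambda) = \mathbb{Z}$, yielding the inclusion $\Lambda[\ell, \mathbb{Z}] \cup \Lambda[\ell, 2^{-1}\mathbb{Z}] \subseteq \{h \in \Lambda : \det h = \ell\}$ needed for the second part.

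For the reverse inclusion, I would take $h \in \Lambda$ with $\det h = \ell$ and prove $\psi(h, \Lambda) \in \{\mathbb{Z}, 2^{-1}\mathbb{Z}\}$. Since $h$ and $\Lambda$ are both $\mathbb{Z}$-integral, $2\psi(h, \Lambda) = g_0\mathbb{Z}$ for some $g_0 \in \mathbb{Z}_{>0}$, so the goal reduces to $g_0 \in \{1, 2\}$. Using $2\psi(x, y) = \textup{tr}(xy^{\iota})$ on the $\mathbb{Z}$-basis $\{e, f, g, M(\omega)\}$ of $\Lambda$ (with $M(\alpha) := \m{0 & \alpha \\ \overline{\alpha} & 0}$), a short computation identifies $g_0$ as
\[
\gcd\bigl(a,\; d,\; 2b + c\,\textup{Tr}_{K/\mathbb{Q}}(\omega),\; b\,\textup{Tr}_{K/\mathbb{Q}}(\omega) + 2c\,N_{K/\mathbb{Q}}(\omega)\bigr).
\]
To rule out an odd prime $p \mid g_0$, I would split on whether $p \mid m$: if $p \nmid m$, the four divisibility conditions combine (by eliminating $c$ between the last two generators) to force $p \mid a, b, c, d$, so $p^2 \mid \det h = \ell$, contradicting squarefreeness; if $p \mid m$, substituting the relation from $p \mid 2b + c\,\textup{Tr}_{K/\mathbb{Q}}(\omega)$ into the determinant identity and reducing modulo $p$ yields $p \mid \ell$, contradicting $\gcd(\ell, m) = 1$. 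To rule out $4 \mid g_0$: the cases $m \equiv 1, 3 \pmod 4$ force $4 \mid a, d$ and $2 \mid b, c$, making $\det h \equiv 0 \pmod 4$ and contradicting that $\ell$ is odd; for $m \equiv 2 \pmod 4$, the condition $4 \mid 2mc$ places no restriction on $c$, but then $\det h \equiv -mc^2 \equiv -2c^2 \pmod 4$ is automatically even, again contradicting $\gcd(\ell, 2) = 1$.

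The main obstacle I anticipate is the power-of-$2$ analysis in the subcase $m \equiv 2 \pmod 4$, where the pairing generators carry no direct constraint on $c$ and the parity of $\ell$ must be extracted from the determinant identity itself using $\gcd(\ell, 2) = 1$, which is part of the hypothesis $\gcd(\ell, 2m) = 1$. Once $g_0 \in \{1, 2\}$ is established, we obtain $\psi(h, \Lambda) \in \{\mathbb{Z}, 2^{-1}\mathbb{Z}\}$, and the second statement follows.
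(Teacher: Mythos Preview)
Your proposal is correct. The paper's own proof consists entirely of two citations: the first statement is attributed to \cite[Lemma~6.2,(3)]{shimura2004arithmetic} and the second to \cite[Lemma~4.5]{shimura2006quadratic}. Your argument for the first part (enlarge $\Lambda$ by $\mathbb{Z}h$, check integrality, invoke maximality) is precisely the standard proof of Shimura's lemma, so there is no real difference there beyond making it self-contained. For the second part you replace the citation by an explicit case analysis on $g_0 = \gcd\bigl(a,d,2b+c\,\textup{Tr}(\omega),\,b\,\textup{Tr}(\omega)+2cN(\omega)\bigr)$; this is more hands-on than simply quoting Shimura, and it has the advantage of showing exactly where each hypothesis ($\ell$ square-free, $\gcd(\ell,2m)=1$) is consumed --- square-freeness kills odd primes $p\nmid m$, the coprimality to $m$ kills odd primes $p\mid m$, and oddness of $\ell$ kills the $4\mid g_0$ case. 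Either approach is fine; yours is self-contained but longer, the paper's is a one-line deferral to the literature.
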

\begin{proof}
    The first statement follows from \cite[Lemma 6.2, (3)]{shimura2004arithmetic}, and the second statement is \cite[Lemma 4.5]{shimura2006quadratic}.
    \end{proof}

By this Lemma, we have in particular that $f_i \in \Lambda$ since $\ell \in \mathbb{Z}$. Using \eqref{translation-hermitian-matrices} and the isomorphism $\Psi$ of Proposition \ref{relation of SL_2 with discriminant kernel}, we then see that the equation 
\[
h_{\mathfrak{m}}^{-1}\left(\frac{\frac{1}{2}q_{\mathfrak{m}}S_{\mathfrak{m}}^{-1}[s]-D_{\mf{m}}}{q_{\mathfrak{m}}d}, A_{\mathfrak{m}}S_{\mathfrak{m}}^{-1}s, d\right)^{t} = \gamma \xi_{i}, \ \gamma \in \widehat{\Gamma}^{+}(L_{0})
\] in \eqref{alternative n(xi m i)} can be replaced with the condition that the matrix 
\[
\det(\alpha_{\mathfrak{m}})\alpha^{-1}_{\mathfrak{m}} \m{\frac{\frac{1}{2}q_{\mathfrak{m}}S_{\mathfrak{m}}^{-1}[s]-D_{\mf{m}}}{q_{\mathfrak{m}}d} & h \\ \bar{h} & d & } \overline{\alpha_{\mathfrak{m}}}^{-t} \sim f_i,
\]
where the notation $X \sim Y$ for two $2\times 2$ matrices $X$ and $Y$ means that there is a matrix $A \in \textup{SL}_2(\mathcal{O}_K)$ such that $AX\overline{A}^t = Y$. Let now
\begin{multline}
    \mathcal{N}(\xi_{\mathfrak{m},i};d) := \left\{s \in \mathbb{Z}^{2}/dS_{\mathfrak{m}}\mathbb{Z}^{2} \mid D \equiv \frac{1}{2}q_{\mathfrak{m}}S_{\mathfrak{m}}^{-1}[s] \pmod{q_{\mathfrak{m}}d}, \,\, \alpha^{-1}_{\mathfrak{m}} A_s\overline{\alpha_{\mathfrak{m}}}^{-t} \sim f_i\right\},
\end{multline}
where 
\begin{equation}\label{A s}
    A_s= \m{\frac{\frac{1}{2}q_{\mathfrak{m}}S_{\mathfrak{m}}^{-1}[s]-D}{q_{\mathfrak{m}}d}N(\mathfrak{m}) & h_s N(\mathfrak{m}) \\ \bar{h_s}N(\mathfrak{m}) & dN(\mathfrak{m}) & }, 
\end{equation}
and $h_s:=\theta(A_{\mf{m}}S_{\mf{m}}^{-1}s) \in K$. We then have $n(\xi_{\mathfrak{m},i};d) := \# \mathcal{N}(\xi_{\mathfrak{m},i};d)$.\\

For a $d \in \mathbb{N}$, $f_i \in \Lambda$ with $\textup{det}(f_i) = \ell$, we set 
 \begin{multline}\label{representation number hermitian}
         \mathcal{R}(f_i, \mathfrak{m}, d) := \left\{h \in \mathfrak{m}^2/d\mathfrak{m}^2 \mid h\overline{h} \equiv -\ell N(\mathfrak{m})^2 \pmod {d N(\mathfrak{m})^2} \right. \\
    \left.
    \textup{ and } \exists \sigma  \in \m{\mathfrak{m} & \mathfrak{m}\\\overline{\mathfrak{m}}&\overline{\mathfrak{m}}} \textup{ with } \det \sigma = N(\mathfrak{m})\textup{ such that }
    \right. \\
    \left.
    \sigma f_i\sigma^{*} \sim \m{\frac{(h\overline{h}+\ell N(\mathfrak{m})^2)}{dN(\mathfrak{m})^2} N(\mathfrak{m}) & h\\ \overline{h} & dN(\mathfrak{m})} \right\},
    \end{multline}
    Let then $r(f_i, \mf{m}, d):= \#\mathcal{R}(f_i, \mf{m}, d)$. We now prove the following Proposition.

    \begin{proposition}\label{proposition r=n}
        Let $\ell \in \mathbb{Z}_{>0}$ and $\xi_i \in L_0^+[\ell,2^{-1}\mathbb{Z}]$. Recall $f_i := f_{\omega}^{-1}(\xi_i) \in \Lambda^+[\ell,2^{-1}\mathbb{Z}]$. Then, for any $d \in \mathbb{N}$, we have $n(\xi_{\mf{m}, i};d) = r(f_i, \mf{m}, d)$.
    \end{proposition}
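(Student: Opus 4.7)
The plan is to construct an explicit bijection between the set counted by $n(\xi_i;d)$ and the one counted by $r(f_i;d)$, by pushing the isomorphism $f_\omega$ of Lemma \ref{isomorphism} down to the appropriate moduli. Given $s \in \mathbb{Z}^2/dS\mathbb{Z}^2$, write $t := S^{-1}s = (t_1,t_2)^{t}$ and observe that the vector
\[
v := \left(\tfrac{\frac{1}{2}qS^{-1}[s]-D}{qd},\; t,\; d\right)^{t}
\]
automatically satisfies $\phi_0[v] = -D/q = \ell$, so the defining condition of $n(\xi_i;d)$ just says that $v$ lies in $L_0[\ell, 2^{-1}\mathbb{Z}]$ and in the $\widehat{\Gamma}^+(L_0)$-orbit of $\xi_i$. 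Since $\gamma \xi_i \in L_0 = \mathbb{Z}^4$ for every $\gamma \in \widehat{\Gamma}^+(L_0)$, any contributing $s$ must satisfy $t \in \mathbb{Z}^2$, i.e.\ $s \in S\mathbb{Z}^2$. Hence the contributing classes of $s$ are in bijection with $\mathbb{Z}^2/d\mathbb{Z}^2$, and then with $\mathcal{O}_K/d\mathcal{O}_K$ via $(t_1,t_2) \longmapsto h := t_1 + \omega t_2$, since $\{1,\omega\}$ is a $\mathbb{Z}$-basis of $\mathcal{O}_K$.

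I would then verify the numerical conditions. A direct case-by-case computation using \eqref{matrix S} and \eqref{omega}, in the spirit of Lemma \ref{equivalence of solution sets}, yields $\tfrac{1}{2}S[t] = h\bar{h}$ in all three residue classes of $m$ modulo $4$, and hence $\tfrac{1}{2}S^{-1}[s] = h\bar{h}$. Substituting $D = -\ell q$ turns the congruence $D \equiv \tfrac{1}{2}qS^{-1}[s]\pmod{qd}$ into $h\bar{h} \equiv -\ell \pmod{d}$, which is the first condition defining $r(f_i;d)$; moreover, the first coordinate of $v$ reduces to $(h\bar{h}+\ell)/d$. Applying $f_\omega^{-1}$ to $v$ via \eqref{linear isomorphism} therefore produces exactly the Hermitian matrix
\[
X := \begin{pmatrix}(h\bar{h}+\ell)/d & h \\ \bar{h} & d\end{pmatrix}
\]
appearing in \eqref{representation number hermitian}.

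For the orbit condition, I would invoke Proposition \ref{relation of SL_2 with discriminant kernel}: every $\gamma \in \widehat{\Gamma}^+(L_0)$ equals $\Psi(\alpha)$ for some $\alpha \in \textup{SL}_2(\mathcal{O}_K)$, well-defined up to sign, and $\Psi$ satisfies $\Psi(\alpha) f_\omega(u) = f_\omega(\alpha u \alpha^*)$ on $U^+$. Applying $f_\omega^{-1}$ to the identity $v = \Psi(\alpha)\xi_i = \Psi(\alpha)f_\omega(f_i)$ therefore yields $X = \alpha f_i \alpha^*$, which, setting $B := \alpha^{-1} \in \textup{SL}_2(\mathcal{O}_K)$, is equivalent to $B X B^* = f_i$, i.e.\ $X \sim f_i$ in the sense of \eqref{representation number hermitian}. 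The reverse implication is the same calculation read backwards, and the sign ambiguity in $\alpha$ is irrelevant because $(-\alpha)u(-\alpha)^* = \alpha u \alpha^*$. Composing the three bijections gives $n(\xi_i;d) = r(f_i;d)$.

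I do not foresee a substantial obstacle. The only mildly subtle point is the matching of parameter spaces in the first step; it is resolved by the observation that classes $s \notin S\mathbb{Z}^2$ contribute nothing, because the required equality $v = \gamma \xi_i$ forces $v \in L_0$.
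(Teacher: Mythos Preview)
Your proposal is correct and constructs the very same bijection as the paper: in both cases the map is $s \mapsto h = t_1 + \omega t_2$ with $t = S^{-1}s$, and the orbit condition is matched via $f_\omega$ and the isomorphism $\Psi$ of Proposition \ref{relation of SL_2 with discriminant kernel}. The difference lies in how you reach that parametrisation. The paper proceeds case-by-case on $m \pmod 4$: it writes out the congruence $D \equiv \frac{1}{2}qS^{-1}[s]\pmod{qd}$ explicitly, extracts divisibility conditions on the coordinates of $s$ (e.g.\ $m \mid s_2$, then parity), and only after these reductions arrives at the formula for $h$. You instead observe at the outset that the orbit condition $v=\gamma\xi_i$ forces $v\in L_0$ (since $\xi_i\in L_0$ by Lemma \ref{integrality of hermitian matrices}), hence $t=S^{-1}s\in\mathbb{Z}^2$, and from there the identity $\tfrac{1}{2}S[t]=h\bar h$ handles all residue classes at once. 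Your route is shorter and more conceptual; the paper's is more hands-on but makes no appeal to $\xi_i\in L_0$. Either way the argument is complete.
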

\begin{proof}

We consider the map $\mathcal{N}(\xi_{\mathfrak{m},i}, d) \longrightarrow \mathcal{R}(f_i, \overline{\mathfrak{m}}, d)$ given by $s \longmapsto h_{s}:= \theta(A_{\mathfrak{m}}S_{\mathfrak{m}}^{-1}s) N(\mathfrak{m})$. \\

\underline{Well-defined:} First of all, if $s \in \mathcal{N}(\xi_{\mathfrak{m},i}, d)$, then we consider the vector 
\begin{equation*}
    v := \left(\frac{\frac{1}{2}q_{\mathfrak{m}}S_{\mathfrak{m}}^{-1}[s]-D}{q_{\mathfrak{m}}d}, A_{\mathfrak{m}}S_{\mathfrak{m}}^{-1}s, d\right).
\end{equation*}
We observe that $\phi_0[v] = -\ell \in \mathbb{Z}$ and because of the conditions, we see that $v \in L_{\mathfrak{m},0}^{*}$, and $L_{\mathfrak{m},0}$ is $\mathbb{Z}$-maximal. This, by \cite[Lemma 6.2, (3)]{shimura2004arithmetic}, in turn implies $v \in L_{\mathfrak{m},0}$, which shows $A_{\mathfrak{m}}S_{\mathfrak{m}}^{-1}s \in L_{\mathfrak{m}}$.\\

This now shows that $\theta(A_{\mathfrak{m}}S_{\mathfrak{m}}^{-1}s) \in \overline{\mathfrak{m}}^2/ N(\mathfrak{m})$, and hence $h_s \in \overline{\mathfrak{m}}^2$. Moreover, since $\theta$ is an isometry,
\begin{equation*}
    h_s \overline{h}_s = N_{K/\mathbb{Q}}(h_s) = N_{K/\mathbb{Q}} (\theta(A_{\mathfrak{m}}S_{\mathfrak{m}}^{-1}s)N(\mathfrak{m})) = N(\mathfrak{m})^2\frac{1}{2}S[A_{\mathfrak{m}}S_{\mathfrak{m}}^{-1}s] = N(\mathfrak{m})^2\frac{1}{2}S_{\mathfrak{m}}^{-1}[s]. 
\end{equation*}
Hence $h_s \overline{h}_s + \ell N(\mathfrak{m})^2 = N(\mathfrak{m})^2\left(\frac{1}{2}S_{\mathfrak{m}}^{-1}[s]+\ell\right)\equiv 0 \pmod {d N(\mathfrak{m})^2}$, because $D \equiv \frac{1}{2}q_{\mathfrak{m}}S_{\mathfrak{m}}^{-1}[s] \pmod{q_{\mathfrak{m}}d}$ implies $\frac{1}{2}S_{\mathfrak{m}}^{-1}[s]+\ell \equiv 0\pmod d$, after replacing $D = -q_\mathfrak{m}\ell$.\\

Assume now $s \equiv t \pmod{d S_{\mathfrak{m}}\mathbb{Z}^2}$ and write $s-t = d S_{\mathfrak{m}}u$, with $u \in \mathbb{Z}^2$. We then have 
\begin{equation*}
    h_s - h_t =  N(\mathfrak{m})\theta(A_{\mathfrak{m}}S_{\mathfrak{m}}^{-1}(s-t)) = dN(\mathfrak{m})\theta(A_{\mathfrak{m}}u),
\end{equation*}
and since $L_{\mathfrak{m}} = A_{\mathfrak{m}}L$, we have $\theta(A_{\mathfrak{m}}u) \in \overline{\mathfrak{m}}^2/N(\mathfrak{m})$. This shows $h_s - h_t  \in d \overline{\mathfrak{m}}^2$. \\

Finally, $\m{\frac{(h_s\overline{h_s}+\ell N(\mathfrak{m})^2)}{dN(\mathfrak{m})^2} N(\mathfrak{m}) & h_s\\ \overline{h_s} & dN(\mathfrak{m})} = A_s$ (see \eqref{A s}), and the last property follows from the property of $A_s$ and the properties of $\alpha_{\mathfrak{m}}$. \newline

\underline{Injective}: Assume now $h_{s} = h_{t}$ in $\mathcal{R}(f_i, \overline{\mathfrak{m}},d)$. This means that there is some $a \in \overline{\mathfrak{m}}^2$ such that $h_s-h_t = da$. By the calculations above, we have
\begin{equation*}
    h_s - h_t = (N\mathfrak{m} )\theta(A_{\mathfrak{m}}S_{\mathfrak{m}}^{-1}(s-t)),
\end{equation*}
which shows 
\begin{equation*}
    s - t = N(\mathfrak{m})^{-1}S_{\mathfrak{m}}A_{\mathfrak{m}}^{-1}\theta^{-1}(h_s-h_t) = dS_{\mathfrak{m}}A_{\mathfrak{m}}^{-1}\theta^{-1}(N(\mathfrak{m})^{-1} a) \in dS_{\mathfrak{m}}\mathbb{Z}^2,
\end{equation*}
because $\theta(N(\mathfrak{m})^{-1} a) \in L_{\mathfrak{m}}$, since $N(\mathfrak{m})^{-1} a \in \overline{\mathfrak{m}}^2/N(\mathfrak{m})$ and therefore $A_{\mathfrak{m}}^{-1}L_{\mathfrak{m}} \in L =\mathbb{Z}^2$.\\

\underline{Surjective}: If now $h \in \mathcal{R}(f_i, \overline{\mathfrak{m}},d)$, we can choose $s:= S_{\mathfrak{m}}A_{\mathfrak{m}}^{-1}\theta^{-1}(N(\mathfrak{m})^{-1}h) \in \mathbb{Z}^2$, since $N(\mathfrak{m})^{-1}h \in \overline{\mathfrak{m}}^2/N(\mathfrak{m})$ and hence $\theta^{-1}(N(\mathfrak{m})^{-1}h) \in L_{\mathfrak{m}}$, hence $s \in S_\mathfrak{m} \mathbb{Z}^2 \subset \mathbb{Z}^2$.
\end{proof}

Our next step is to relate the numbers $r(f_i, \overline{\mf{m}},d)$ to some representation numbers attached to the binary Hermitian form associated with the Hermitian matrix $f_i$.\\

Let $I_{\ell}$ denote the number of $\textup{SL}_2(\mathcal{O}_K)$-equivalence classes in $\Lambda$ for the set of Hermitian binary forms of fixed discriminant $\ell$. We have $I_{\ell} \geq h_1$, and we extend the $\textup{SL}_2(\mathcal{O}_K)$-equivalence classes $\{f_i\}_{i=1}^{h_1}$, with $f_i \in \Lambda[\ell,2^{-1}\mathbb{Z}]$, to a complete set of $\textup{SL}_2(\mathcal{O}_K)$-equivalence classes $\{f_i\}_{i=1}^{I_{\ell}}$ (note that by the first statement of Lemma \ref{integrality of hermitian matrices}, we have $\Lambda[\ell,2^{-1}\mathbb{Z}] \subset \Lambda$ and hence our original $f_i$'s belong in $\Lambda$). Here, we will abuse the notation and write $f_i$ for both the binary Hermitian form and the Hermitian matrix representing the form. That is, for $u,v \in K$, we write $f_i(u,v):= (u,v) f_i (\bar{u},\bar{v})^t$. \\

For each $i=1,\cdots, I_{\ell}$, we introduce the set of $\mf{m}$-primitive representations of $d$ by $f_i$ (see \cite[Definition 3.1]{Elstrodt})
\[
P(f_i,\mf{m},d):= \{ (u,v) \in \mf{m} \times \mf{m} \,\,|\,\,f_i(u,v) = N(\mf{m})d,\,\,\,<u,v>=\mf{m} \}, 
\]
where the notation $<u,v>$ is the fractional ideal generated by $u$ and $v$ in $K$. Moreover, we define the $\mf{m}$-primitive representation numbers $p(f_i,\mf{m},d) := \# P(f_i,\mf{m},d)$. Finally, in addition to the set \ref{representation number hermitian}, we define

\[
\mathcal{R}(\mathfrak{m}, d, -\ell) := \left\{h \in \mathfrak{m}^2/d\mathfrak{m}^2 \mid h\overline{h} \equiv -\ell N(\mathfrak{m})^2 \pmod {d N(\mathfrak{m})^2} \right).
\]
Following \cite[Definition 3.4]{Elstrodt}, we define a map
\begin{equation}\label{phi-map}
\varphi_{f_i}: P(f_i, \mf{m}, d) \rightarrow \mathcal{R}(\mf{m}, d,-\ell),\,\,\,\varphi_{f_i}\left((u,v)\right) := h,
\end{equation}
where for any $(u,v) \in P(f_i, \mf{m}, d)$, we pick $r, s \in \overline{\mf{m}}$ such that $\det \m{r & s \\ u & v}  = N(\mf{m})$ and set $h$ to be the entry such that
\[
\m{r & s \\ u & v} f_i \m{\bar{r} & \bar{u} \\ \bar{s} & \bar{v}} = \m{ * & h \\ \bar{h} & dN(\mf{m})}.
\]
It is shown in \cite[Definition 3.4]{Elstrodt} that this map is well-defined and from \cite[Proposition 3.5]{Elstrodt} induces a map
\[
\varphi_{f_i} : P(f_i, \mf{m}, d)/E(f_i) \rightarrow \mathcal{R}(\mf{m}, d,-\ell),
\]
where $E(f_i) :=\left\{\gamma \in \textup{SL}_2(\mathcal{O}_K)\,\,|\,\,\gamma f_i \gamma^* = f_i \right\}$ acting linearly on the set $P(f_i,\mf{m}, d)$. We then have the following Lemma.
\begin{lemma}
    The map $\phi_{f_i}$ gives a bijection between $P(f_i, \overline{\mathfrak{m}}, d)/E(f_i)$ and $\mathcal{R}(f_i, \mathfrak{m}, d)$ of \eqref{representation number hermitian}.
\end{lemma}
\begin{proof}
    First, from the definition above, the image $\phi_{f_i}$ is in $\mathcal{R}(f_i, \mathfrak{m}, d)$. We now form the disjoint union of the domains $P(f_i,\overline{\mf{m}},d)/E(f_i)$ for all $i =1, \cdots, I_{\ell}$, and construct the map
\begin{equation}\label{composition-of-phi-maps}
    \varphi_{\ell} : \bigsqcup_{i \in I_{\ell}} P(f_i, \overline{\mf{m}}, d)/E(f_i) \longrightarrow \mathcal{R}(\overline{\mf{m}},d, -\ell)
\end{equation}
such that $\varphi_{\ell}{\mid (P(f_i,\overline{\mf{m}}, d)/E(f_i))} := \varphi_{f_i}$ for all $i=1, \cdots, I_{\ell}$. From \cite[Theorem 3.7]{Elstrodt}, the map $\varphi_{\ell}$ is bijective.\\

    Since $\phi_{\ell}$ of \eqref{composition-of-phi-maps} is bijective and $\phi_{f_i} = \phi_{\ell} |_{P(f_i, \mathfrak{m},d)/E(f_{i})}$, we have that $\phi_{f_i}$ is injective. We now only need to show that the image is the whole of $\mathcal{R}(f_i, \mathfrak{m},d)$. Let $h \in \mathcal{R}(f_i, \mathfrak{m},d)$. Then, since $\phi_{\ell}$ is surjective, there is $(u,v) \in P(f_j, \overline{\mathfrak{m}}, d)$ with $\phi_{\ell}((u,v)) = h$ for some $j$. This means that there is $\sigma \in \m{\mathfrak{m} & \mathfrak{m}\\\overline{\mathfrak{m}}&\overline{\mathfrak{m}}}$ with $\det \sigma = N(\mathfrak{m})$ such that
    \begin{equation*}
        \sigma f_j \sigma ^{*} = \m{\star & h\\\overline{h} & d N(\mathfrak{m})}.
    \end{equation*}
    But since $h \in R(f_i, \mathfrak{m},d)$, we also have that $\exists \tau $ with the same properties such that
    \begin{equation*}
        \tau f_j \tau ^{*} = \m{\star & h\\\overline{h} & d N(\mathfrak{m})}.
    \end{equation*}
    The $\star$ entries are the same because the matrix has determinant $\ell N (\mathfrak{m})^2$. Hence $\tau^{-1}\sigma f_j (\tau^{-1}\sigma)^{*} = f_i$. But we can now verify $\tau^{-1}\sigma \in \textup{SL}_2(\mathcal{O}_K)$, which forces $i=j$. Hence, the Lemma follows.
\end{proof}

\section{Partial Zeta functions and Latimer's work}
In this Section, we will establish a link between the partial zeta functions $\zeta_{\mathcal{Q}}(\xi_{\mf{m}i};s)$ of \eqref{zeta xi i m definition} and partial zeta functions attached to the quaternion algebra $B$. From Proposition \ref{proposition r=n} we have, for any $d \geq 1$, $n(\xi_{\mf{m},i}, d) = r(f_i, \mf{m},d)$, and from the previous discussion $r(f_i, \mf{m},d) = p(f_i,\mf{m}, d)/e(f_i)$. Hence,
\begin{multline*}
\zeta_{\mathcal{Q}}(\xi_{\mf{m},i};s)= \sum_{\substack{N=1\\(N,p)=1 \forall p\in \mathcal{Q}}}^{\infty} n(\xi_{\mf{m},i}; N)N^{-s}=\sum_{\substack{N=1\\(N,p)=1 \forall p\in \mathcal{Q}}}^{\infty} r(f_i, \mf{m}, N)N^{-s}=\\= \frac{1}{e(f_i)} \sum_{\substack{N=1\\(N,p)=1 \forall p\in \mathcal{Q}}}^{\infty} p(f_i,\mf{m}, N)N^{-s}=\frac{1}{e(f_i)} N(\mf{m})^{s} \sum_{\substack{(u,v) \in \mf{m}^2 \setminus \mathcal{Q},\\ <u,v>=\mf{m}}}  \frac{1}{f_i(u,v)^{s}},
\end{multline*}

where the notation $(u,v) \in \mf{m}^2 \setminus \mathcal{Q}$ means that we exclude the pairs $(u,v)$ which give $\gcd(f_i(u,v),p) \neq 1$ for $p \in \mathcal{Q}$. 
We now set

\begin{equation}\label{zeta hat}
    \widehat{\zeta}_{\mathcal{Q}}(\xi_i;s) :=\sum_{\mf{m} \in \textup{Cl}(K)} \zeta_{\mf{m}, \mathcal{Q}}(s) \zeta_{\mathcal{Q}}(\xi_{\mf{m},i};s).
\end{equation}
In particular, equation (\ref{expression with iota}) now reads 
\begin{equation}\label{expression with zeta hat}
D(\bm{F}, \bm{P};s) = \sum_{i=1}^{h_1} \widehat{\zeta}_{\mathcal{Q}}(\xi_i;s-k+3)D_{\mathcal{Q}}(F,\iota(\xi_i);s).
\end{equation}

We now have the following Lemma.
\begin{lemma}\label{zeta hat class number one}
    
    With notation as above we have,
    \[
   \widehat{\zeta}_{\mathcal{Q}}(\xi_i;s) =  \frac{1}{e(f_i)} \sum_{(u,v) \in \mathcal{O}_K^2\setminus \mathcal{Q}} \frac{1}{f_i(u,v)^{s}}.
    \]
\end{lemma}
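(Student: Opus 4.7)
My plan is to exploit the class number one hypothesis to convert the sum over coprime pairs $(u,v)$ into a sum over all pairs, with the "correction factor" being precisely $\zeta_{K,\mathcal{P}}(s)$. The starting point is the identity established in the discussion immediately preceding the lemma, namely
\[
\zeta_{\xi_i,\mathcal{P}}(s) = \frac{1}{e(f_i)} \sum_{\substack{(u,v)\in\mathcal{O}_K^2\setminus\mathcal{P}\\\langle u,v\rangle=\mathcal{O}_K}} \frac{1}{f_i(u,v)^s},
\]
and the key algebraic input is that $f_i$ is Hermitian, so $f_i(tu,tv) = N(t)\,f_i(u,v)$ for every $t \in \mathcal{O}_K$.

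Next I would decompose an arbitrary nonzero pair $(u,v) \in \mathcal{O}_K^2$ as $(u,v) = (tu',tv')$ with $\langle u',v'\rangle = \mathcal{O}_K$. Since $K$ has class number one, the ideal $\langle u,v\rangle$ is principal, so such $t$ exists and is unique up to multiplication by a unit; correspondingly $(u',v')$ is unique up to simultaneous multiplication by the inverse unit. Hence the map $(t,(u',v')) \longmapsto (tu',tv')$ from $(\mathcal{O}_K\setminus\{0\}) \times \{\text{coprime pairs}\}$ to $\mathcal{O}_K^2\setminus\{(0,0)\}$ is surjective and exactly $|\mathcal{O}_K^\times|$-to-one. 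Using class number one once more to write the Dedekind zeta function as a sum over (nonzero) principal generators modulo units,
\[
\zeta_{K,\mathcal{P}}(s) = \frac{1}{|\mathcal{O}_K^\times|} \sum_{\substack{t\in\mathcal{O}_K\setminus\{0\}\\\gcd(N(t),p)=1\,\forall p\in\mathcal{P}}} \frac{1}{N(t)^s},
\]
I can multiply this against the expression for $\zeta_{\xi_i,\mathcal{P}}(s)$ and collapse the double sum via $f_i(tu',tv') = N(t)f_i(u',v')$.

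The bookkeeping that needs checking is that the condition ``$(u,v) \in \mathcal{O}_K^2 \setminus \mathcal{P}$'', meaning $\gcd(f_i(u,v),p)=1$ for every $p\in\mathcal{P}$, factors compatibly with the decomposition. Because $f_i(tu',tv') = N(t)f_i(u',v')$, this condition is equivalent to the pair of conditions $\gcd(N(t),p)=1$ and $\gcd(f_i(u',v'),p)=1$ for all $p\in\mathcal{P}$, so the two restricted sums over $t$ and over coprime $(u',v')$ multiply cleanly. The factor $|\mathcal{O}_K^\times|$ introduced by the parametrisation cancels the one in the formula for $\zeta_{K,\mathcal{P}}(s)$, and one arrives at
\[
\zeta_{K,\mathcal{P}}(s)\,\zeta_{\xi_i,\mathcal{P}}(s) = \frac{1}{e(f_i)} \sum_{(u,v)\in\mathcal{O}_K^2\setminus\mathcal{P}} \frac{1}{f_i(u,v)^s}.
\]

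The only subtle point, and therefore what I would treat most carefully in the write-up, is the unit-counting step: one must verify that the fibres of $(t,(u',v'))\mapsto (tu',tv')$ have constant size $|\mathcal{O}_K^\times|$, which is precisely where class number one is used (otherwise the parametrisation would need to be broken up across ideal classes, producing an extra sum that does not collapse into $\zeta_{K,\mathcal{P}}(s)$). Everything else is a routine re-indexing and application of the homogeneity $f_i(tu',tv') = N(t)f_i(u',v')$.
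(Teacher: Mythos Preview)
Your proposal is correct and follows exactly the approach the paper has in mind: the paper's proof is the one-line remark ``this follows from factoring the possible common factors of $u$ and $v$ in the second series'' (with a reference to \cite[Proposition 4.5]{Elstrodt}), and your argument is precisely a careful unpacking of that factoring, including the unit bookkeeping and the compatibility of the $\mathcal{P}$-condition with the decomposition $(u,v)=t(u',v')$.
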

\begin{proof}

This is \cite[Proposition 4.5]{Elstrodt}.
\end{proof}
To proceed further, we rely on the two papers of Latimer \cite{Latimer1,Latimer2} on the relation between representation numbers of binary Hermitian forms and ideals in quaternion algebras.\newline

Recall that in Section \ref{quaternion algebras}, we defined $B = K + \epsilon K$ and fixed the order $\mathfrak{o} = \mathcal{O}_K + \epsilon \mathcal{O}_K$ in $B$. For a left ideal $\mathfrak{U}$ in $\mathfrak{o}$, we may choose elements $\omega_1,\omega_2 \in \mathfrak{o}$ such that 
\[
\mathfrak{U} = \mathcal{O}_K \omega_1 + \mathcal{O}_K \omega_2.
\]
We call $\omega_1, \omega_2$ a basis of $\mathfrak{U}$ with respect to $\mathcal{O}_K$ and write $\mathfrak{U} = [\omega_1,\omega_2]$. If we write $\omega_i = g_{i1} + \epsilon \,\overline{g_{i2}}$, with $g_{ij} \in \mathcal{O}_K$, we define 
\begin{equation}\label{norm_latimer}
    N(\mathfrak{U}) := \det \m{g_{11} & g_{12} \\ g_{21} & g_{22}}.
\end{equation}
(see \cite[Section 4]{Latimer1}). In the case when $N(\mathfrak{U})$ is a positive integer, we call $\omega_1, \omega_2$ a proper basis. 
Following \cite[Theorem 1]{Latimer1}, we will call $\mathfrak{U}$ a regular ideal if it possesses a proper basis. From \cite[Lemma 4]{Latimer1}, if $\mathfrak{U}$ is a regular ideal, every ideal in its (left) class is also regular. Hence, the term regular class is meaningful.\\

The main theorem in \cite[Theorem 3]{Latimer1} reads as follows:
\begin{theorem}
    There is one-to-one correspondence between regular classes of left ideals in $\mathfrak{o}$ and $\textup{SL}_2(\mathcal{O}_K)$-classes of positive definite binary hermitian forms of discriminant $\ell$.
\end{theorem}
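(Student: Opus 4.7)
The plan is to make the bijection explicit in both directions via the reduced norm on $B$. In the forward direction, given a regular left $\mathfrak{o}$-ideal $\mathfrak{U}$ with proper basis $\omega_1,\omega_2$, I would attach the binary Hermitian form
\begin{equation*}
f_{\mathfrak{U}}(x,y) := \frac{\textup{nrd}(x\omega_1 + y\omega_2)}{N(\mathfrak{U})},\qquad x,y \in \mathcal{O}_K.
\end{equation*}
A direct calculation using $\omega_i = g_{i1} + \epsilon\,\overline{g_{i2}}$ together with the relations $\epsilon a = \bar a\,\epsilon$ and $\epsilon^2 = -\ell$ yields
\begin{equation*}
\textup{nrd}(x\omega_1 + y\omega_2) = |xg_{11} + yg_{21}|^2 + \ell\,|xg_{12} + yg_{22}|^2,
\end{equation*}
which is manifestly positive definite since $\ell > 0$ and $(g_{ij})$ is non-singular by definition of a proper basis. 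That $f_{\mathfrak{U}}$ actually lies in $\Lambda$ would follow from the determinantal formula \eqref{norm_latimer} for $N(\mathfrak{U})$ combined with elementary manipulations of the $g_{ij}$.

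I would then show that $\mathfrak{U} \mapsto f_{\mathfrak{U}}$ descends to classes. Two proper bases of $\mathfrak{U}$ differ by a matrix $M \in \textup{GL}_2(\mathcal{O}_K)$, and the requirement that both values of $N$ be positive integers forces $\det M$ to be a positive real unit of $\mathcal{O}_K$, which, since $K$ is imaginary quadratic, pins down $\det M = 1$; hence $M \in \textup{SL}_2(\mathcal{O}_K)$, and the two associated forms are $\textup{SL}_2(\mathcal{O}_K)$-equivalent. If instead $\mathfrak{U}' = \alpha\mathfrak{U}$ for some $\alpha \in B^{\times}$, then $\alpha\omega_1,\alpha\omega_2$ is a proper basis of $\mathfrak{U}'$ (after possibly adjusting a sign), and the transformation rule $N(\mathfrak{U}') = |\textup{nrd}(\alpha)|\,N(\mathfrak{U})$ combined with multiplicativity of $\textup{nrd}$ cancels to give $f_{\mathfrak{U}'} = f_{\mathfrak{U}}$.

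For the inverse direction, given a positive definite binary Hermitian form $f$, the explicit reduced-norm computation above prescribes how to produce $\omega_1,\omega_2 \in B$: factor $f$ as a sum of a $K$-norm and $\ell$ times a $K$-norm, read off the corresponding $g_{ij} \in \mathcal{O}_K$, and set $\mathfrak{U} := \mathcal{O}_K\omega_1 + \mathcal{O}_K\omega_2$. The stability $\mathfrak{o} \subset \mathcal{O}_L(\mathfrak{U})$ should then follow directly from $\mathfrak{o} = \mathcal{O}_K + \epsilon\mathcal{O}_K$ and the explicit action on the basis, and the two constructions are checked to be mutually inverse by unwinding the definitions. The main obstacle I anticipate is precisely this inverse step: the matching of a given positive definite Hermitian form with a factorisation as $|{\cdot}|^2 + \ell|{\cdot}|^2$ is a non-trivial arithmetic statement, essentially requiring one to realise every such form as the restriction of the quaternionic norm to a rank-two $\mathcal{O}_K$-submodule of $B$. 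Establishing surjectivity (every positive definite binary Hermitian form of the correct discriminant arises this way) then amounts to comparing two class sets, and one would need to verify that no class is missed, presumably via a local-global argument exploiting the definiteness of $B$ at infinity and the explicit structure of $\mathfrak{o}$.
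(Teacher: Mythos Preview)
The paper does not prove this statement at all; it is quoted verbatim as \cite[Theorem 3]{Latimer1} and used as a black box. So there is no ``paper's own proof'' to compare against, only Latimer's original argument. That said, your sketch is broadly along the lines of Latimer's approach, with two issues worth flagging.

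First, a side-of-multiplication slip. For \emph{left} $\mathfrak{o}$-ideals the natural class relation is $\mathfrak{U}' = \mathfrak{U}\alpha$ (right multiplication), not $\mathfrak{U}' = \alpha\mathfrak{U}$ as you wrote. With right multiplication one has $x\omega_1' + y\omega_2' = (x\omega_1 + y\omega_2)\alpha$, so $\textup{nrd}$ scales by $\textup{nrd}(\alpha)$; a direct computation with $\omega_i' = \omega_i\alpha$ gives $N(\mathfrak{U}') = N(\mathfrak{U})\,\textup{nrd}(\alpha)$, and since $B$ is definite $\textup{nrd}(\alpha) > 0$, so the quotient $f_{\mathfrak{U}}$ is genuinely class-invariant. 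Your version with left multiplication fails because $x\alpha\omega_i \neq \alpha x\omega_i$ for $x \in \mathcal{O}_K$ and $\alpha \in B$.

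Second, and more substantially, your inverse construction is over-engineered and the ``main obstacle'' you anticipate is an artefact of that choice. Latimer's actual inverse is completely explicit: to a positive definite $f = \begin{pmatrix} a & b \\ \bar{b} & c \end{pmatrix}$ of determinant $\ell$ he attaches the left ideal $\mathfrak{U} = \mathcal{O}_K\, a + \mathcal{O}_K\,(b+\epsilon)$; this is exactly the representative $[a,b+\epsilon]$ that the present paper invokes in the proof of the next Proposition. One checks directly that $\epsilon\cdot a = a\epsilon \in \mathcal{O}_K(b+\epsilon) + \mathcal{O}_K a$ (using $\epsilon^2 = -\ell = b\bar{b} - ac$) and that $\epsilon(b+\epsilon) = \bar{b}\epsilon - \ell \in \mathfrak{U}$, so $\mathfrak{U}$ is a left $\mathfrak{o}$-ideal; regularity is immediate since $N(\mathfrak{U}) = \det\begin{pmatrix} a & 0 \\ b & 1\end{pmatrix} = a > 0$. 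There is no need to ``factor $f$ as $|\cdot|^2 + \ell|\cdot|^2$'' or to run a local-global argument: the surjectivity is exhibited by this formula, and the mutual inversion is a short direct check.
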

We now write $\{\mathfrak{U}_i\}_{i=1}^{h_1}$ for representatives of regular classes of ideals in $\mathfrak{o}$ corresponding to the binary Hermitian forms $\{f_i\}_{i=1}^{h_1}$, where we recall that $f_i \in \Lambda[\ell,2^{-1}\mathbb{Z}] \subseteq \Lambda$. We first establish the following Proposition.

\begin{proposition}
   The ideals $\mathfrak{U}_i$, $i=1,\ldots,h_1$, are invertible. 
\end{proposition}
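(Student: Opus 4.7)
The plan is to reduce the statement to the local claim that $\mathfrak{o} = \mathcal{O}_K + \epsilon\mathcal{O}_K$ is a maximal $\mathbb{Z}$-order in $B$ under our hypotheses. Once that is established, invertibility of every left $\mathfrak{o}$-ideal (and in particular of each $\mathfrak{U}_i$) is a classical fact: in a maximal quaternion order over a Dedekind domain, every one-sided ideal is locally principal, hence invertible in the two-sided sense of Section \ref{quaternion algebras} (see, e.g., Voight, Main Theorem 18.1.3 or Section 16.6).

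For the maximality of $\mathfrak{o}$, I would exploit the identification made in Section \ref{quaternion algebras}: $\mathfrak{o} = A^+(L\cap W)$, coming from the even Clifford algebra of the ternary quadratic space $(\widetilde W,\phi_0)$ with lattice $L_0\cap\widetilde W$. In the proof of Proposition \ref{stabilisers of lattices} we already showed, under the imposed restrictions on $m$ and $\ell$, that $L_0\cap\widetilde W$ is $\mathbb{Z}$-maximal and computed its discriminant ideal. The theory of ternary quadratic forms and even Clifford orders (the classical correspondence recorded in Voight, Theorem 22.4.1 / Proposition 22.4.7) then gives that the even Clifford order of a maximal ternary lattice is itself a maximal order in the associated quaternion algebra whenever the discriminants match; in our situation this matching is automatic from the discriminant formula for $A^+(L\cap W)$ and for $B=\left(\tfrac{-\det S,-\ell}{\mathbb{Q}}\right)$.

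The main obstacle is the prime-by-prime verification at the ramified or ``small'' primes $p \mid 2\ell m$. Away from these primes, $\mathfrak{o}_p$ contains $1$ and $\epsilon$ with $\nu(\epsilon)=\ell\in\mathbb{Z}_p^{\times}$, and $\mathcal{O}_{K,p}$ is the maximal order of $K_p$, so maximality of $\mathfrak{o}_p$ is immediate. At odd $p\mid \ell m$ the square-freeness of $\ell$ and $m$ together with $\gcd(\ell,m)=1$ ensures that the local discriminant of $\mathfrak{o}_p$ is exactly $p\mathbb{Z}_p$, matching the ramification locus of $B_p$. At $p=2$ the congruence conditions on $\ell$ and $m$ in Proposition \ref{stabilisers of lattices} are precisely those needed to avoid an extra ``suborder'' contribution at $2$, so that $\mathfrak{o}_2$ is again maximal.

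With $\mathfrak{o}$ maximal at every prime, it is a maximal $\mathbb{Z}$-order in $B$, and therefore every regular left $\mathfrak{o}$-ideal $\mathfrak{U}_i$ is locally principal, i.e.\ invertible. Under the class-number-one assumption on $K$, no further subtlety arises from the behaviour of $\mathcal{O}_K$ itself. This yields the proposition.
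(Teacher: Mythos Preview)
Your strategy has a genuine gap: the order $\mathfrak{o}=\mathcal{O}_K+\epsilon\mathcal{O}_K$ is \emph{not} maximal in general, even under the hypotheses of Proposition~\ref{stabilisers of lattices}. Take for instance $m=1$ (so $K=\mathbb{Q}(i)$, class number one) and $\ell=5$; all the congruence conditions are met. Then $B=\left(\frac{-1,-5}{\mathbb{Q}}\right)$ is ramified exactly at $\{2,\infty\}$, so $\textup{disc}(B)=2$, whereas a direct Gram-matrix computation for the basis $\{1,i,\epsilon,i\epsilon\}$ gives $\det(\textup{trd}(e_k e_l))=-400$, hence reduced discriminant $\mathfrak{d}(\mathfrak{o})=20$. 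Thus $\mathfrak{o}$ has level $10$ and is far from maximal; it is not even hereditary at $2$. The assertion that ``the discriminants match automatically'' is therefore false, and with it the conclusion that every left $\mathfrak{o}$-ideal is invertible. (A similar failure occurs for $m\equiv 3\pmod 4$: with $m=3$, $\ell=5$ one finds $\mathfrak{d}(\mathfrak{o})=15$ but $\textup{disc}(B)=5$.) The underlying issue is that maximality of the ternary lattice $L_0\cap\widetilde{W}$ does \emph{not} force maximality of its even Clifford order; it only controls the size of the discriminant, which may still exceed $\textup{disc}(B)$.

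The paper avoids this obstruction entirely by arguing directly with the specific ideals $\mathfrak{U}_i$ rather than with the order. For $f_i=\m{a&b\\\bar b&c}\in\Lambda^+[\ell,2^{-1}\mathbb{Z}]$, Latimer's correspondence gives a representative $\mathcal{L}=[a,\,b+\epsilon]$; setting $\mathcal{L}'=[a,\,\bar b-\epsilon]$ (the image under the main involution) one computes $\mathcal{L}\mathcal{L}'=a\bigl(a\mathcal{O}_K+b\mathcal{O}_K+\bar b\mathcal{O}_K+c\mathcal{O}_K+\epsilon\mathcal{O}_K\bigr)$. The primitivity condition $2\psi(f_i,\Lambda)=\mathbb{Z}$ translates to $a\mathbb{Z}+c\mathbb{Z}+\textup{Tr}_{K/\mathbb{Q}}(b\mathcal{O}_K)=\mathbb{Z}$, and extending scalars to $\mathcal{O}_K$ gives $a\mathcal{O}_K+b\mathcal{O}_K+\bar b\mathcal{O}_K+c\mathcal{O}_K=\mathcal{O}_K$, whence $\mathcal{L}\cdot\tfrac{1}{a}\mathcal{L}'=\mathfrak{o}$ and $\mathcal{O}_L(\mathcal{L})=\mathfrak{o}$. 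The point is that invertibility of these particular ideals is a consequence of the \emph{primitivity of the Hermitian forms} $f_i$, not of any global structural property of $\mathfrak{o}$; this is what your reduction to maximality misses.
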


\begin{proof}
Let us consider a $f_i \in \Lambda^+\left[\ell, 2^{-1}\mathbb{Z}\right] \subseteq \Lambda$, and write $f_i = \m{a & b \\ \bar{b} & c}$, with $a,c \in \mathbb{Z}$ and $b \in \mathcal{O}_K$. By the proof of \cite[Theorem 3]{Latimer1}, a representative of the class corresponding to this form is $\mathcal{L}:= [a, b+ \epsilon]$. It suffices to show $\mathcal{L}$ invertible, as then every ideal in its class will be invertible (see \cite[Chapter 17]{voight}). Recall, $\epsilon^2 = -k = b\bar{b} -ac<0$. 

Let $\mathcal{L}' := [a, \bar{b}-\epsilon]$, which is the ideal obtained by taking the standard involution of $B$. We compute 
\begin{multline*}
\mathcal{L}\mathcal{L}' = a^2\mathcal{O}_K + (a\bar{b} - a\epsilon)\mathcal{O}_K +(ab+\epsilon a)\mathcal{O}_K +(b\bar{b} -b \epsilon + \epsilon\bar{b} -\epsilon^2)\mathcal{O}_K= \\=a(a\mathcal{O}_K + b\mathcal{O}_K+\bar{b}\mathcal{O}_K +c\mathcal{O}_K) + a\epsilon \mathcal{O}_K,
\end{multline*}
since $(b+\epsilon)(\bar{b}-\epsilon) = b\bar{b} -b\epsilon +\epsilon \bar{b} -\epsilon^2 = b\bar{b} +k = ac$. Hence,
\[
\mathcal{L}\mathcal{L}' = a(a\mathcal{O}_K + b\mathcal{O}_K+\bar{b}\mathcal{O}_K +c\mathcal{O}_K + \epsilon\mathcal{O}_K) 
\]
We now claim that in order to establish that $\mathcal{L}$ is invertible with inverse $\frac{1}{a} \mathcal{L}'$, it is enough to show that 
\begin{equation}\label{claim}
    a\mathcal{O}_K + b\mathcal{O}_K+\bar{b}\mathcal{O}_K +c\mathcal{O}_K= \mathcal{O}_K.
\end{equation}
Indeed, if this is true, we will have that $\mathcal{L} \left(\frac{1}{a}\mathcal{L}'\right) = \mathfrak{o}$, so it would be enough to show that $\mathfrak{o} = \mathcal{O}_L(\mathcal{L})
$. Following \cite[paragraph 16.5.18]{voight}, 
we have
\[
\mathfrak{o} = \mathcal{L} \left(\frac{1}{a}\mathcal{L}'\right) = \mathcal{O}_L(\mathcal{L})\mathcal{L} \left(\frac{1}{a}\mathcal{L}'\right) = \mathcal{O}_L(\mathcal{L}) \mathfrak{o} = \mathcal{O}_L(\mathcal{L}),
\]
because $\mathfrak{o} \subseteq \mathcal{O}_L(\mathcal{L})$, as $\mathcal{L}$ is an $\mathfrak{o}$-ideal. That is, $\mathfrak{o} =\mathcal{O}_L(\mathcal{L})$, which shows that $\mathcal{L}$ is invertible. \newline

We now establish our claim in \eqref{claim}, i.e.,
\[
a\mathcal{O}_K + b\mathcal{O}_K+\bar{b}\mathcal{O}_K +c\mathcal{O}_K= \mathcal{O}_K.
\]

We will make use of the assumption that $f_i \in \Lambda^{+}\left[\ell, 2^{-1}\mathbb{Z}\right]$, and $\ell$ is an integer. Note that from \cite[(2.18)]{shimura_diophantine_2006}, the first condition is equivalent to  
\[
a\mathbb{Z} + c\mathbb{Z} + \textup{Tr}_{K/\mathbb{Q}}(b \mathcal{O}_K) = \mathbb{Z},
\]
and the second implies that $b \in \mathcal{O}_K$ (see Lemma \ref{integrality of hermitian matrices}), which in turn gives 
\[
a\mathcal{O}_K + b\mathcal{O}_K+\bar{b}\mathcal{O}_K +c\mathcal{O}_K \subseteq \mathcal{O}_K.
\]
For the other direction, we write $\mathcal{O}_K = \mathbb{Z} + \omega\mathbb{Z} = \mathbb{Z} + \overline{\omega}\mathbb{Z}$, with $\omega$ as in \eqref{omega}. We then have
\begin{multline*}
    \mathcal{O}_K = \mathbb{Z} + \omega\mathbb{Z} = (a\mathbb{Z} + c\mathbb{Z} + \textup{Tr}_{K/\mathbb{Q}}(b \mathcal{O}_K)) + \omega (a\mathbb{Z} + c\mathbb{Z} + \textup{Tr}_{K/\mathbb{Q}}(b \mathcal{O}_K)) =\\=
a\mathcal{O}_K + c\mathcal{O}_K + \textup{Tr}_{K/\mathbb{Q}}(b \mathcal{O}_K)+ \omega \textup{Tr}_{K/\mathbb{Q}}(b \mathcal{O}_K)\subseteq a\mathcal{O}_K + c\mathcal{O}_K +b\mathcal{O}_K+\bar{b}\mathcal{O}_K,
\end{multline*}

since $\textup{Tr}_{K/\mathbb{Q}}(b \mathcal{O}_K) \subset b \mathcal{O}_K + \bar{b}\mathcal{O}_K$ and similarly $\omega \textup{Tr}_{K/\mathbb{Q}}(b \mathcal{O}_K) \subset b \mathcal{O}_K + \bar{b}\mathcal{O}_K$.
\end{proof}

For a (left) ideal $I$ of $\mathfrak{o}$, we set $\mathcal{N}(I) := \# (\mathfrak{o}/I)$. We will now prove the following Lemma, which is given without proof as a footnote in \cite[page 441]{Latimer1}.
\begin{lemma}\label{relation of norms}
    If $I$ is a regular left ideal in $\mathfrak{o}$, we have $N(I)^2 = \mathcal{N}(I)$, where $N(I)$ is the norm of equation \eqref{norm_latimer}.
\end{lemma}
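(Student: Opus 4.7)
The plan is to interpret $N(I)$ as the determinant of the matrix expressing the inclusion $I \hookrightarrow \mathfrak{o}$ over $\mathcal{O}_K$, and then apply Smith normal form over the PID $\mathcal{O}_K$ to compute $\mathcal{N}(I) = [\mathfrak{o}:I]$.

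Under the standing class number one hypothesis on $K$, $\mathcal{O}_K$ is a PID, so every finitely generated torsion-free $\mathcal{O}_K$-module is free. Both $\mathfrak{o}$ and the left ideal $I$, viewed as left $\mathcal{O}_K$-modules via $\mathcal{O}_K \subset \mathfrak{o}$, are therefore free of rank two, with bases $\{1, \epsilon\}$ and $\{\omega_1, \omega_2\}$, respectively. Regularity of $I$ allows the second basis to be chosen \emph{proper}, so that $N(I) \in \mathbb{Z}_{>0}$.

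Using $\epsilon a = \overline{a}\epsilon$ for $a \in K$, we rewrite
\[
\omega_i \;=\; g_{i1} + \epsilon\,\overline{g_{i2}} \;=\; g_{i1} + g_{i2}\epsilon \;=\; g_{i1}\cdot 1 + g_{i2}\cdot \epsilon,
\]
so the matrix $M := (g_{ij})_{1\leq i,j \leq 2}$ represents the inclusion $I \hookrightarrow \mathfrak{o}$ in the chosen bases, with $\det M = N(I)$. Smith normal form over $\mathcal{O}_K$ then provides $U, V \in \textup{GL}_2(\mathcal{O}_K)$ and $d_1, d_2 \in \mathcal{O}_K$ with $M = UDV$, $D := \textup{diag}(d_1,d_2)$, and hence an isomorphism of abelian groups
\[
\mathfrak{o}/I \;\cong\; \mathcal{O}_K/d_1\mathcal{O}_K \;\oplus\; \mathcal{O}_K/d_2\mathcal{O}_K.
\]
Taking cardinalities, using $\#(\mathcal{O}_K/d\mathcal{O}_K) = N_{K/\mathbb{Q}}(d)$ (non-negative, since $K$ is imaginary quadratic) and that $\det U, \det V \in \mathcal{O}_K^{\times}$ have norm one, I would conclude
\[
\mathcal{N}(I) \;=\; N_{K/\mathbb{Q}}(d_1 d_2) \;=\; N_{K/\mathbb{Q}}(\det M) \;=\; N_{K/\mathbb{Q}}(N(I)) \;=\; N(I)^2,
\]
the last equality holding because $N(I)$ is a positive rational integer.

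The only subtle point is the rewriting $\omega_i = g_{i1}\cdot 1 + g_{i2}\cdot \epsilon$ in the left $\mathcal{O}_K$-basis $\{1,\epsilon\}$, which is precisely what makes $\det(g_{ij})$ (rather than some other expression) correspond to the index; afterwards, the argument is a standard Smith normal form computation and no essential obstacle is anticipated.
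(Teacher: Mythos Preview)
Your argument is correct. The key rewriting $\omega_i = g_{i1} + \epsilon\,\overline{g_{i2}} = g_{i1}\cdot 1 + g_{i2}\cdot \epsilon$ is exactly what identifies $M=(g_{ij})$ as the transition matrix of the inclusion $I\hookrightarrow \mathfrak{o}$ in the left $\mathcal{O}_K$-bases, and after that the index computation is standard.

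The paper proceeds differently and more concretely: it refines to $\mathbb{Z}$-bases, writing $\mathfrak{o}$ with $\mathbb{Z}$-basis $\{1,\omega,\epsilon,\omega\epsilon\}$ and $I$ with $\mathbb{Z}$-basis $\{\theta_1,\omega\theta_1,\theta_2,\omega\theta_2\}$, then computes the $4\times 4$ change-of-basis matrix $C$ explicitly (with a case split according to $m \bmod 4$) and checks by hand that $|\det C| = N(I)\overline{N(I)} = N(I)^2$. In effect the paper is carrying out, entry by entry, the identity $\det C = N_{K/\mathbb{Q}}(\det M)$ that you invoke abstractly. Your route is cleaner and avoids the case analysis; the paper's route is more elementary and, notably, does not use the class-number-one assumption at all. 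Your invocation of Smith normal form is the only place that hypothesis enters, and it is in fact dispensable: once both $\mathfrak{o}$ and $I$ are free over $\mathcal{O}_K$ with a given transition matrix $M$, the identity $[\mathfrak{o}:I]=|N_{K/\mathbb{Q}}(\det M)|$ holds over any ring of integers, so you could drop that hypothesis with a one-line adjustment.
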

\begin{proof}
    First of all, if $\{a_i\}, \{b_i\}$ are $\mathbb{Z}$-bases for $\mathfrak{o}$ and $I$ respectively, related by a base change matrix $C$, we have $\mathcal{N}(I) = |\det(C)|$.  We write $\mathcal{O}_K = \mathbb{Z} + \mathbb{Z}\omega$ and then we have that a $\mathbb{Z}$-basis for $\mathfrak{o}$ is $\{1, \omega, \epsilon, \omega \epsilon\}$. Moreover, assume we take an $\mathcal{O}_K$-basis for $I$, given by $\theta_1 = x+y\epsilon $ and $\theta_2 = z+w\epsilon $, with $x,y,z,w \in \mathcal{O}_K$. A $\mathbb{Z}$-basis for $I$ is then given by $\{\theta_1, \omega \theta_1, \theta_2, \omega\theta_2\}$. We write $x=a+b\omega$, $y = c+d\omega$, $z = e+f\omega$, and $w = g+h\omega$, with $a,b,c,d,e,f,g,h \in \mathbb{Z}$.\\

    Consider first the case $K=\mathbb{Q}(\sqrt{-m})$ with $m \equiv 1, 2 \pmod 4$. Then, using $\omega^2=-m$, $\overline{\omega} = -\omega$ and the fact that $\epsilon \omega = \overline{\omega}\epsilon = -\omega \epsilon$, we have that the matrix $C$ can be written as
    \begin{equation*}
        C = \m{a&b&c&d \\ -mb &a &-md& c\\e&f&g&h\\-mf&e&-mh&g}, \ m\equiv 1,2 \pmod 4.
    \end{equation*}
    If $m \equiv 3 \pmod 4$ and using the fact that $\overline{\omega} = 1-\omega$ and $\omega^2 = \omega - (1+m)/4$, we get
    \begin{equation*}
        C = \m{a&b&c&d \\ -(1+m)b/4 &b+a &-d(1+m)/4& c+d\\e&f&g&h\\-(1+m)f/4 &f+e &-h(1+m)/4& g+h}, \ m\equiv 3 \pmod 4.
    \end{equation*}

In both cases, we have $N(I) = \det \m{x &y\\z&w}$. Using the fact that $I$ is a regular ideal and hence $N(I)$ is a positive integer, we can compute $N(I)^2 = N(I)\overline{N(I)} = |\det (C)| = \mathcal{N}(I)$ for both cases, as claimed.
\end{proof}

\begin{proposition}

For each $i=1, \cdots, h_1$, we have
\[
    \widehat{\zeta}_{\mathcal{Q}}(\xi_i;s) = \sum_{\substack{I \in [\mathfrak{U}_i]\\(I,{\mathcal{Q}})=1 }} N(I)^{-s},
\]
    where the summation is over all left ideals $I$ in $\mathfrak{o}$ in the class of $\mathfrak{U}_i$ that have norm not divisible by any prime in the set $\mathcal{P}$.  
\end{proposition}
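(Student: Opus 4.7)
The plan is to use Latimer's correspondence \cite{Latimer1} to translate the sum over pairs $(u,v) \in \mathcal{O}_K^2 \setminus \mathcal{P}$ supplied by Lemma \ref{zeta hat class number one} into a sum over integral left $\mathfrak{o}$-ideals in the class of $\mathfrak{U}_i$. Since $K$ has class number one and the preceding Proposition ensures $\mathfrak{U}_i$ is invertible, $\mathfrak{U}_i$ is free of rank two as an $\mathcal{O}_K$-module; fix an $\mathcal{O}_K$-basis $\omega_1, \omega_2$. By the construction of $f_i$ from this basis in \cite[Theorem 3]{Latimer1}, the element $\alpha := u\omega_1 + v\omega_2 \in \mathfrak{U}_i$ satisfies the key norm identity
\[
\textup{nrd}(\alpha) = \alpha \overline{\alpha} = f_i(u,v)\, N(\mathfrak{U}_i), \qquad (u,v) \in \mathcal{O}_K^2.
\]

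The heart of the proof is to establish a bijection
\[
\bigl(\mathcal{O}_K^2 \setminus\{(0,0)\}\bigr)/E(f_i) \;\longleftrightarrow\; \bigl\{I : I \textup{ integral left } \mathfrak{o}\textup{-ideal in the class of } \mathfrak{U}_i\bigr\},
\]
under which $f_i(u,v) = N(I)$. The forward map sends $(u,v)$ to the left $\mathfrak{o}$-ideal $I_{(u,v)}$ determined by the relation $I_{(u,v)}\, \mathfrak{U}_i = \mathfrak{o}\alpha$; this exists and is unique since $\mathfrak{U}_i$ is invertible, and can be written as $I_{(u,v)} = \mathfrak{o}\alpha\, \mathfrak{U}_i^{-1}$. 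Multiplicativity of the reduced norm together with Lemma \ref{relation of norms} then gives $N(I_{(u,v)}) = \textup{nrd}(\alpha)/N(\mathfrak{U}_i) = f_i(u,v)$, which is a positive integer, confirming that $I_{(u,v)}$ is integral. For injectivity modulo $E(f_i)$: if $I_{(u,v)} = I_{(u',v')}$, then $\mathfrak{o}\alpha = \mathfrak{o}\alpha'$, so $\alpha' = \epsilon \alpha$ for some $\epsilon \in \mathcal{O}_L(\mathfrak{U}_i)^\times = \mathfrak{o}^\times$; under Latimer's identification, the linear action of such $\epsilon$ on the basis $\omega_1, \omega_2$ corresponds to a matrix in $E(f_i)$ acting on $(u,v)$. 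For surjectivity, given an integral left $\mathfrak{o}$-ideal $I$ in the class of $\mathfrak{U}_i$, the product $I \mathfrak{U}_i$ is a principal left $\mathfrak{o}$-ideal (here $K$ having class number one plays an essential role) of the form $\mathfrak{o}\alpha$ for some $\alpha \in \mathfrak{U}_i$, and the coordinates of $\alpha$ in the basis $\omega_1, \omega_2$ recover the preimage $(u,v)$.

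The coprimality conditions match transparently: $(I_{(u,v)}, \mathcal{P}) = 1$ is equivalent to $N(I_{(u,v)}) = f_i(u,v)$ being coprime to every prime in $\mathcal{P}$, precisely the condition defining $(u,v) \in \mathcal{O}_K^2 \setminus \mathcal{P}$ in Lemma \ref{zeta hat class number one}. Combining the bijection with that Lemma then yields
\[
\hat{\zeta}_{\xi_i,\mathcal{P}}(s) = \frac{1}{e(f_i)}\sum_{(u,v)\in \mathcal{O}_K^2 \setminus \mathcal{P}} f_i(u,v)^{-s} = \sum_{\substack{I \in [\mathfrak{U}_i]\\ (I,\mathcal{P}) = 1}} N(I)^{-s},
\]
as required. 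The main technical point I expect will need care is the verification that $I_{(u,v)} = \mathfrak{o}\alpha\,\mathfrak{U}_i^{-1}$ lies in the correct ideal class of $\mathfrak{U}_i$ and is integral, as well as the principality claim used in the surjectivity step; these rely on the invertibility of $\mathfrak{U}_i$ and on $\mathcal{O}_L(\mathfrak{U}_i) = \mathfrak{o}$, which follow from our choice of the order $\mathfrak{o}$ in the quaternion algebra $B$ and the hypotheses on $K$.
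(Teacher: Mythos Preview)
Your approach and the paper's are the same in spirit: both derive the identity from Latimer's correspondence between classes of binary Hermitian forms and ideal classes in $B$. The paper's proof, however, is essentially a two-line citation: after rewriting $\hat{\zeta}_{\xi_i,\mathcal{P}}(s) = e(f_i)^{-1}\sum_{d} q(f_i,d)\,d^{-s}$ with $q(f_i,d):=\#\{(u,v)\in\mathcal{O}_K^2\setminus\mathcal{P}: f_i(u,v)=d\}$, it invokes \cite[Corollary, p.~443]{Latimer2} for the counting identity $q(f_i,d) = e(f_i)\cdot\#\{J\in[\mathfrak{U}_i]: N(J)=d\}$ and is done. You instead attempt to re-derive this bijection by hand.

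Your explicit construction of the bijection has genuine problems with the non-commutative ideal arithmetic, and these are not merely cosmetic. The defining relation $I_{(u,v)}\,\mathfrak{U}_i = \mathfrak{o}\alpha$ cannot produce a \emph{left} $\mathfrak{o}$-ideal $I_{(u,v)}$: compatibility of the product would require $\mathcal{O}_R(I_{(u,v)}) = \mathcal{O}_L(\mathfrak{U}_i)=\mathfrak{o}$, forcing $I_{(u,v)}$ to be two-sided. Likewise $\mathfrak{o}\alpha\,\mathfrak{U}_i^{-1}$ is not a compatible product, since $\mathcal{O}_R(\mathfrak{o}\alpha)=\alpha^{-1}\mathfrak{o}\alpha$ need not equal $\mathcal{O}_L(\mathfrak{U}_i^{-1})=\mathcal{O}_R(\mathfrak{U}_i)$. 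In the surjectivity step, the product $I\mathfrak{U}_i$ of two left $\mathfrak{o}$-ideals is again ill-posed, and there is no mechanism making it principal. In the injectivity step, left multiplication by a unit $\epsilon\in\mathfrak{o}^\times$ is not $\mathcal{O}_K$-linear on $\mathfrak{U}_i$ (because $K$ is not central in $B$; recall $\epsilon_0 a = \bar a\,\epsilon_0$), so it does not act on the coordinates $(u,v)$ through $\textup{GL}_2(\mathcal{O}_K)$ as you assert; in Latimer's dictionary it is rather the units of the \emph{right} order $\mathcal{O}_R(\mathfrak{U}_i)$, acting by right multiplication, that match $E(f_i)$. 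You correctly anticipated that exactly these steps need care; the cleanest repair is to cite Latimer's result directly, as the paper does.
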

\begin{proof}
    For this, we rely on \cite[Theorem 1]{Latimer2}. 
    On the one hand, we have from Lemma \ref{zeta hat class number one} that
    \begin{equation*}
        \widehat{\zeta}_{\mathcal{Q}}(\xi_i;s) =  \frac{1}{e(f_i)} \sum_{(u,v) \in \mathcal{O}_K^2\setminus \mathcal{Q}} \frac{1}{f_i(u,v)^{s}} = \frac{1}{e(f_i)}\sum_{\substack{d=1 \\ (d,p)=1 \ \forall p \in \mathcal{Q}}}^{\infty} q(f_i,d)d^{-s},
    \end{equation*}
    where $q(f_i, d) := \#\{(u,v) \in \mathcal{O}_K^2 \backslash \mathcal{Q} \mid f_i(u,v) = d\}$. From \cite[Corollary, p. 443]{Latimer2} and because each $f_i$ is taken positive definite, we have that for each $d \geq 1$ with $\gcd (d, p) =1$ for all $p \in \mathcal{Q}$
    \begin{equation*}
        q(f_i, d) = e(f_i)\cdot  \#\{J \in [\mathfrak{U}_i] \textup{ with }N(J)=d\}.
    \end{equation*}
    From this, the Proposition follows.
\end{proof}

We have stated the Proposition above in terms of left ideals in $\mathfrak{o}$, but the presence of the main involution of $B$ allows us to transfer the results also for right-ideals. Given a left ideal $I$ in $\mathfrak{o}$, we can obtain a right ideal $\bar{I} := \{ \bar{a} \,\,|\,\, a \in I\}$, where $\,\bar{}\,$ here denotes the main involution of $B$. Then one obtains a bijection between left and right ideals which preserves norms. Indeed, let us show that $N(\bar{I}) = N(I)$. From \cite[Section 16.6.14]{voight}, for any invertible ideal $J$, we have $\bar{J} J = N(J) \mathcal{O}_R(J)$, and $J \bar{J} = N(J)\mathcal{O}_L(J)$. By setting $J=\bar{I}$ in the first relation and $J=I$ in the second, we obtain 
\begin{equation*}
    I\bar{I} = \textup{nrd}(\bar{I}) \mathcal{O}_R(\bar{I}) \ \textup{ and } \ I\bar{I} =\textup{nrd}(I) \mathcal{O}_L(I).
\end{equation*}
From \cite[Lemma 16.6.7]{voight}, we have $\mathcal{O}_R(\bar{I}) = \mathcal{O}_L(I)$, and hence we obtain that $\textup{nrd}(I) = \textup{nrd}(\bar{I})$. But $\textup{nrd}(I) = N(I)$, from our Lemma \ref{relation of norms} and \cite[Theorem 16.1.3]{voight}.\\

In particular, we may restate the above Proposition as follows.
\begin{proposition}\label{hat zeta functions}

For each $i=1, \cdots, h_1$, we have
\[
    \widehat{\zeta}_{\mathcal{Q}}(\xi_i;s) = \sum_{\substack{I \in [\overline{\mathfrak{U}_i}]\\(I,{\mathcal{Q}})=1 }} N(I)^{-s},
\]
where the summation is over all right ideals $I$ in $\mathfrak{o}$ in the class of $\overline{\mathfrak{U}_i}$ that have norm not divisible by any prime in the set $\mathcal{P}$.  
\end{proposition}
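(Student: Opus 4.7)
The plan is to deduce this Proposition directly from the previous Proposition by means of the involution $\,\bar{}\,$ of $B$, which induces a norm- and class-preserving bijection between left and right ideals of $\mathfrak{o}$.

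First, I would apply the previous Proposition to obtain
\[
\hat{\zeta}_{\xi_i,\mathcal{P}}(s) = \sum_{\substack{I \in [\mathfrak{U}_i]\\(I,{\mathcal{P}})=1 }} N(I)^{-s},
\]
where the sum runs over left ideals $I$ of $\mathfrak{o}$ in the (left) class of $\mathfrak{U}_i$ whose norm is coprime to every prime of $\mathcal{P}$.

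Next, I would introduce the map $I \mapsto \bar{I} := \{\bar{a} \mid a \in I\}$. Since $\,\bar{}\,$ is an anti-involution of $B$ that preserves $\mathfrak{o}$, this map sends left ideals of $\mathfrak{o}$ bijectively to right ideals of $\mathfrak{o}$, and intertwines left multiplication by $B^{\times}$ with right multiplication by $B^{\times}$. Consequently $I \sim_L J$ if and only if $\bar{I} \sim_R \bar{J}$, so the map induces a bijection between the left class $[\mathfrak{U}_i]$ and the right class $[\overline{\mathfrak{U}_i}]$, and the coprimality condition with $\mathcal{P}$ is preserved (once we show norms are preserved).

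The key remaining point, which I see as the main technical step, is to verify $N(\bar{I}) = N(I)$. For this I would invoke the facts from Voight recalled in the excerpt: for any invertible ideal $J$, we have $J\bar{J} = \mathrm{nrd}(J)\mathcal{O}_L(J)$ and $\bar{J}J = \mathrm{nrd}(J)\mathcal{O}_R(J)$. Applying these to $J = I$ and to $J = \bar{I}$, and using $\mathcal{O}_R(\bar{I}) = \mathcal{O}_L(I)$ (Voight, Lemma 16.6.7), we obtain $I\bar{I} = \mathrm{nrd}(I)\mathcal{O}_L(I) = \mathrm{nrd}(\bar{I})\mathcal{O}_L(I)$, hence $\mathrm{nrd}(I) = \mathrm{nrd}(\bar{I})$. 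Combining Lemma \ref{relation of norms} with \cite[Theorem 16.1.3]{voight} gives $N(I) = \mathrm{nrd}(I)$, so indeed $N(\bar{I}) = N(I)$. Substituting $I \mapsto \bar{I}$ in the sum from the previous Proposition yields the claimed formula.
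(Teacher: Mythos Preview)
Your proposal is correct and follows essentially the same approach as the paper: it deduces the right-ideal formulation from the preceding left-ideal Proposition via the main involution $I \mapsto \bar{I}$, and verifies $N(\bar{I}) = N(I)$ by combining the identities $J\bar{J} = \textup{nrd}(J)\mathcal{O}_L(J)$, $\bar{J}J = \textup{nrd}(J)\mathcal{O}_R(J)$ with $\mathcal{O}_R(\bar{I}) = \mathcal{O}_L(I)$ and Lemma~\ref{relation of norms}. The paper presents exactly this argument in the paragraph preceding the Proposition, citing the same facts from Voight.
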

\section{Main Theorem}\label{section-main-theorem}
    In this Section, we are finally ready to state and prove our main Theorem. The first step is to identify everything in the quaternion algebra $B$. We have already done that for the zeta functions $\zeta_{\mathcal{Q}}(\xi_{\mf{m},i};s)$ in the previous Section. As we have already mentioned in Section \ref{quaternion algebras}, $\Xi(A^{+}(W)) = B$. Moreover, $\Xi(A^{+}(\Lambda) \cap A^{+}(W)) = \mathfrak{o}$ from \cite[(2.19b)]{shimura2006quadratic}. Therefore, $\Xi(A^{+}(\Lambda \cap W)) = \mathfrak{o}$ from \cite[Lemma 2.1]{murata2014}. \\

    By substituting \eqref{expression3} to \eqref{expression with zeta hat}, we obtain
    \begin{multline}\label{what we have from Dirichlet}
        \zeta_{\mathcal{Q}}(2s-2k+4)D_{\mathcal{Q}}(\bm{F}, \bm{P};s) \\=L_{\mathcal{Q}}(F;s-k+2)\sum_{i=1}^{h_1}\widehat{\zeta}_{\mathcal{Q}}(\xi_i;s-k+3)\sum_{j=1}^{h_2}A_{f_j} L_{\mathcal{Q}}(\overline{f_j}; s-k+5/2)^{-1} f_{j}(\tau(\zeta_i))=\\ = L_{\mathcal{Q}}(F;s-k+2)\sum_{j=1}^{h_2}A_{f_j} L_{\mathcal{Q}}(\overline{f_j}; s-k+5/2)^{-1}\sum_{i=1}^{h_1} f_{j}(\tau(\zeta_i))\widehat{\zeta}_{\mathcal{Q}}(\xi_i;s-k+3).
    \end{multline}
    
    Note that as has been observed in \cite[Theorem 9.3]{psyroukis_orthogonal}, we have $L_{\textup{Sug}}\left(\textbf{1}; s-1/2\right) = L_{\textup{Shi}}(s)$. Here, the notation means the $L$-function we encounter in \cite{sugano} and \cite{shimura_orthogonal} respectively, as these are normalised differently. From now on, we use the normalisation of Shimura. Therefore, from now on, we will write $L_{\mathcal{Q}}(f_j, s-k+3)$ instead of $L_{\mathcal{Q}}(f_j, s-k+5/2)$. However, we still keep the normalisation of Sugano for $L_{\mathcal{Q}}(F,s)$.\\

    Let us now set up some notation. We remind the reader that we have already chosen a set of representatives $\{\zeta_i\}_{i=1}^{h_1}$ for $G^{+}(\widetilde{W})_{\mathbb{Q}}\backslash G^{+}(\widetilde{W})_{\mathbb{A}}/(G^{+}(\widetilde{W})_{\mathbb{A}} \cap \widetilde{J})$. In the following, we will slightly abuse notation and consider $\{\zeta_i\}_{i=1}^{h_1}$ as elements in $(U,\psi)$ via $f_{\omega}^{-1}$. Hence, we will regard them as representatives for $G^{+}(W)_{\mathbb{Q}}\backslash G^{+}(W)_{\mathbb{A}}/(G^{+}(W)_{\mathbb{A}} \cap J)$. Under the commutative diagram of Proposition \ref{diagram quaternion algebra}, we will write $u_i$ for the element of $B^{\times} \backslash B^{\times}_{\mathbb{A}}/ X$ which corresponds to $\zeta_i$, for all $i=1,\cdots, h_1$. Then $\{u_i\}_{i=1}^{h_1}$ is a complete set of representatives for $B^{\times} \backslash B^{\times}_{\mathbb{A}}/ X$. \\

    Similarly, we choose $\{v_i\}_{i=1}^{h_2}$ as a set of representatives for $H(\xi)_{\mathbb{Q}}\backslash H(\xi)_{\mathbb{A}}/(H(\xi)_{\mathbb{A}} \cap C)$ and write $\{w_i\}_{i=1}^{h_2}$ for the corresponding representatives in $B^{\times} \backslash B^{\times}_{\mathbb{A}}/ N_B(X)$, under Proposition \ref{diagram quaternion algebra}.\\

    We set $S:=A^+(\Lambda \cap W)$ and $\mathcal{J}:= G^+(W)_{\mathbb{A}} \cap J$, and we select the finite set of primes $\mathcal{Q}$ large enough so that $\mathcal{J}_p = (S_p)^{\times} \cap G^+(W)_p$ for all $p \not \in \mathcal{Q}$. We then consider the set $\mathcal{Y} := \{ y \in G^+(W)_{\mathbb{A}} \,\,|\,\, y_p \in \mathcal{Y}_p \textup{ for all } p\}$, where 
    \[
    \mathcal{Y}_p = \begin{cases} G^+(W)_p \cap S_p \,\,\,\text{for}\,\,\,p \not \in \mathcal{Q} \\   \mathcal{J}_p \,\,\text{otherwise}\end{cases}.
    \]
    We further define $\mathcal{Y}^0:= \{ y \in \mathcal{Y} \,\,|\,\, \kappa(y) = \mathbb{Z}\}$, where $\kappa$ is defined in \cite[Section 22.8]{shimura2004arithmetic}, and is called the capacity ideal. Moreover, we fix a set $\mathcal{A} \subset G^{+}(W)_{\mathbb{Q}}\backslash G^{+}(W)_{\mathbb{A}}/(G^{+}(W)_{\mathbb{A}} \cap J)$ of pre-images for the elements $\{v_l\}_{l=1}^{h_2}$ under $\tau$. Assume $\{a_l\}_{l=1}^{h_2}$ are these elements.
    
    We then define 
    \[
    \mathcal{Y}_{a_{\ell}} := \{ y \in \mathcal{Y} \,\,|\,\, \tau(y) \in H(\xi)_{\mathbb{Q}} \tau(a_{\ell})(H(\xi)_{\mathbb{A}} \cap C)\}, 
    \]
and
\[
\mathcal{Y}_{a_{\ell}}^0 := \mathcal{Y}^0 \cap \mathcal{Y}_{a_{\ell}}.
\]
Moreover, we set $Q := \left\{ q \in \prod_{p} \mathbb{Z}_p \,\, |\,\, q_p \neq 0,\,\, q-1 \in p\mathbb{Z}_p,\, \forall p \in \mathcal{Q} \right\}$, and $R := \mathcal{J} \cap \mathbb{Q}^{\times}_{\mathbf{h}}$, where $\mathbb{Q}^{\times}_{\mathbf{h}}$ denotes the finite adeles of $\mathbb{Q}$. We then have the following Lemma.

\begin{lemma} \label{decomposition}

We have a bijection
\[
\mathcal{Y}_{a_{\ell}}/ \mathcal{J} \longleftrightarrow Q/R \times \mathcal{Y}^0_{a_{\ell}}/\mathcal{J}.
\]
Moreover,
\begin{equation}\label{y mod j decomposition}
    \mathcal{Y}/ \mathcal{J} = \bigsqcup_{\ell = 1}^{h_2} \mathcal{Y}_{a_{\ell}}/ \mathcal{J} 
\end{equation}
\end{lemma}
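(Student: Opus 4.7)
The plan is to establish the two parts in order: first the disjoint union decomposition \eqref{y mod j decomposition}, which is essentially formal, then the bijection, which is the heart of the lemma and hinges on the capacity map $\kappa$ of \cite[Section 22.8]{shimura2004arithmetic} together with the fact that $\mathbb{Q}$ has class number one.

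For the disjoint union, the first step would be to verify $\tau(\mathcal{J}) \subseteq H(\xi)_{\mathbb{A}} \cap C$. At primes $p \notin \mathcal{P}$ this holds because $\mathcal{J}_p = (S_p)^{\times} \cap G^{+}(W)_p$ consists of units in $S_p$, whose $\tau$-images preserve $L_0 \cap \widetilde{W}$; at primes $p \in \mathcal{P}$ it is built into the definition \eqref{j} of $J_p$, since there $\tau(\mathcal{J}_p) \subseteq C_p$ and $\nu(\mathcal{J}_p) \subseteq \mathbb{Z}_p^{\times}$. Consequently the class of $\tau(y)$ in $H(\xi)_{\mathbb{Q}}\backslash H(\xi)_{\mathbb{A}}/(H(\xi)_{\mathbb{A}} \cap C)$ depends only on the $\mathcal{J}$-coset of $y$, and since $\{v_\ell = \tau(a_\ell)\}_{\ell=1}^{h_2}$ is a complete set of representatives, each $y \in \mathcal{Y}$ falls into exactly one $\mathcal{Y}_{a_\ell}$, which yields the decomposition.

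For the bijection, the key observation is that for any $y \in \mathcal{Y}_{a_\ell}$, the capacity $\kappa(y)$ is an integral ideal of $\mathbb{Z}$ coprime to every prime of $\mathcal{P}$, forced by the local conditions defining $\mathcal{Y}$ (integrality off $\mathcal{P}$ and being a unit on $\mathcal{P}$). Using class number one of $\mathbb{Q}$, I would choose $q(y) \in Q$ with $v_p(q(y)_p) = v_p(\kappa(y))$ for $p \notin \mathcal{P}$ and $q(y)_p \equiv 1 \pmod p$ for $p \in \mathcal{P}$; such $q(y)$ is determined uniquely modulo $R = \mathcal{J} \cap \mathbb{Q}^{\times}_{\mathbf{h}}$. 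Setting $y^0 := q(y)^{-1} y$, one gets $\kappa(y^0) = \mathbb{Z}$, so $y^0 \in \mathcal{Y}^0$; and since scalars lie in the kernel of $\tau$, also $\tau(y^0) = \tau(y)$, so $y^0 \in \mathcal{Y}^0_{a_\ell}$. The assignment $[y] \mapsto ([q(y)], [y^0])$ is then well-defined: replacing $y$ by $yj$ for $j \in \mathcal{J}$ leaves $\kappa(y)$ unchanged, so the same $q(y)$ works, and modifies $y^0$ by $j$. The inverse map $(q, y^0) \mapsto q y^0$ clearly lands back in $\mathcal{Y}_{a_\ell}$.

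The step I expect to require the most care is the uniqueness statement $q(y) \bmod R$, which amounts to identifying $R$ as precisely the stabilizer in $Q$ of the capacity assignment. This requires a local analysis of $\mathcal{J}_p \cap \mathbb{Q}^{\times}_p$ at primes $p \in \mathcal{P}$, where $\mathcal{J}_p$ is not simply $(S_p)^{\times}$ but the subgroup cut out by the conditions of \eqref{j}; the normalization $q_p \equiv 1 \pmod p$ in the definition of $Q$ should match this group exactly when intersected with $\mathbb{Q}^{\times}_p$.
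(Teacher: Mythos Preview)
Your approach is correct and essentially the same as the paper's; the key observation in both is that scalars (elements of $Q$) lie in the kernel of $\tau$, so the factorisation $y = q y_0$ respects the $\tau$-class. The difference is organisational: the paper cites \cite[Lemma 22.9(iii)]{shimura2004arithmetic} as a black box for the global bijection $\mathcal{Y}/\mathcal{J} \leftrightarrow Q/R \times \mathcal{Y}^0/\mathcal{J}$ and \cite[Lemma 22.10]{shimura2004arithmetic} for the decomposition $\mathcal{Y}^0/\mathcal{J} = \bigsqcup_\ell \mathcal{Y}^0_{a_\ell}/\mathcal{J}$, then deduces both assertions simultaneously from $\tau(q)=1$; you instead prove \eqref{y mod j decomposition} directly (without passing through $\mathcal{Y}^0$) and then unpack the content of Shimura's 22.9(iii) explicitly via the capacity map. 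One minor simplification: in your verification that $\tau(\mathcal{J}) \subseteq H(\xi)_{\mathbb{A}} \cap C$, there is no need to treat $p \notin \mathcal{P}$ separately, since by definition $\mathcal{J} = G^+(W)_{\mathbb{A}} \cap J$ and the condition $\tau(J_p) \subseteq C_p$ in \eqref{j} already holds at every prime.
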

\begin{proof}
First of all, from \cite[Lemma 22.9, (iii)]{shimura2004arithmetic}, we have that there is a bijection between the set $\mathcal{Y}/ \mathcal{J}$ and the set $Q/R \times \mathcal{Y}^0/\mathcal{J}$ by mapping a $y \in \mathcal{Y}$ to $(q,y_0)$, where $y = q y_0$ with $q \in Q$ and $y_0 \in \mathcal{Y}^0$.
Moreover, from \cite[Lemma 22.10]{shimura2004arithmetic} we have that
\begin{equation*}
    \mathcal{Y}^0/ \mathcal{J} = \bigsqcup_{\ell=1}^{h_2} \mathcal{Y}^0_{a_{\ell}}/ \mathcal{J}.
\end{equation*}
The Lemma then follows by observing that $\tau(q) = 1$ for all $q \in Q$, and hence $\tau(y) = \tau(y_0)$ for $y = qy_0$ as before.  
\end{proof}

 We now state and prove the following Proposition.

\begin{proposition} \label{Equality of partial zeta}

We have the equality
    \[
    \sum_{\substack{i \textup{ such that } \\\rho(u_i)=w_{\ell}}}\widehat{\zeta}_{\mathcal{Q}}(\xi_i;s)=\sum_{y \in \mathcal{Y}_{a_{\ell}}/\mathcal{J}} N(\nu(y)\mathbb{Z})^{-s},
    \]
where $\nu$ is the map of \eqref{nu map} and $\rho$ the map of Proposition \ref{diagram quaternion algebra}.    
\end{proposition}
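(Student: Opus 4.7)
My strategy is to transport both sides of the claimed equality into the quaternion algebra $B$ via the isomorphism $\Xi$ of Proposition \ref{Xi proposition} and the Clifford identification $A^{+}(W) \cong B$, so that both sides become two different presentations of the same Dirichlet series indexed by integral right $\mathfrak{o}$-ideals. First I would translate the right-hand side: under $\Xi$, $G^{+}(W)_{\mathbb{A}} \cong B^{\times}_{\mathbb{A}}$; by the remarks opening Section \ref{quaternion algebras}, $\mathcal{J}_{p}$ corresponds to $\mathfrak{o}_{p}^{\times}$, and $\mathcal{Y}_{p}$ corresponds to $\mathfrak{o}_{p}\cap B_{p}^{\times}$ for $p \notin \mathcal{P}$ (and to $\mathfrak{o}_{p}^{\times}$ for $p \in \mathcal{P}$). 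Assigning to $y \in \mathcal{Y}$ the right $\mathfrak{o}$-ideal $I_{y} := \bigcap_{p} y_{p}\mathfrak{o}_{p}$ yields an integral right $\mathfrak{o}$-ideal with $(I_{y})_{p}=\mathfrak{o}_{p}$ for every $p \in \mathcal{P}$, hence $(N(I_{y}),\mathcal{P})=1$; and $I_{y}=I_{y'}$ exactly when $y^{-1}y' \in \mathcal{J}$. This descends to a bijection between $\mathcal{Y}/\mathcal{J}$ and the set of integral right $\mathfrak{o}$-ideals of norm coprime to $\mathcal{P}$. Since the spinor norm $\nu$ agrees with the reduced norm of $B$ under $\Xi$, one has $N(\nu(y)\mathbb{Z}) = N(I_{y})$.

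Next I would pin down the $a_{\ell}$-condition. The condition $\tau(y) \in H(\xi)_{\mathbb{Q}}\tau(a_{\ell})(H(\xi)_{\mathbb{A}}\cap C)$ simply says that the image of $y$ in the orthogonal double coset space is $v_{\ell}$. Translating through the commutative square of Proposition \ref{diagram quaternion algebra}, with $\Xi$ on top and $A^{+}$ on the bottom, and using the description of $\rho$ as $[I]\mapsto[\mathcal{O}_{L}(I)]$, this is equivalent to $[\,y\mathfrak{o}y^{-1}\,] = w_{\ell}$ in the type set $B^{\times}\backslash B^{\times}_{\mathbb{A}}/N_{B}(X)$. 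Hence $\mathcal{Y}_{a_{\ell}}/\mathcal{J}$ is in bijection with the integral right $\mathfrak{o}$-ideals of norm coprime to $\mathcal{P}$ whose left order has type $w_{\ell}$, and the right-hand side of the proposition becomes $\sum N(I)^{-s}$ over exactly this set.

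For the left-hand side, Proposition \ref{hat zeta functions} already gives $\widehat{\zeta}_{\xi_{i},\mathcal{P}}(s) = \sum N(I)^{-s}$ over integral right $\mathfrak{o}$-ideals in the right class $[\overline{\mathfrak{U}_{i}}]$ with norm coprime to $\mathcal{P}$. Under the Latimer/adelic identifications, combined with the main involution of $B$, the class $[\overline{\mathfrak{U}_{i}}]$ is precisely the class $[u_{i}\mathfrak{o}]$ in $B^{\times}\backslash B^{\times}_{\mathbb{A}}/X$, and $\rho$ returns the type of the left order of this right ideal. The summation condition $\rho(u_{i}) = w_{\ell}$ therefore singles out exactly the right ideal classes with left order of type $w_{\ell}$, so summing $\widehat{\zeta}_{\xi_{i},\mathcal{P}}(s)$ over these $i$ produces the same Dirichlet series over the same set of right ideals as the one obtained for the right-hand side, and the identity follows.

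The main obstacle I anticipate is the careful tracking of bijections in the middle step: verifying that $\tau$ in the commutative diagram of Proposition \ref{diagram quaternion algebra} induces exactly $[I]\mapsto[\mathcal{O}_{L}(I)]$ under $\Xi$ and $A^{+}$, and that Latimer's bijection (which is phrased in terms of left ideal classes) matches the adelic parametrisation by $u_{i}$ (which gives right ideal classes $[u_{i}\mathfrak{o}]$) after passing through the main involution of $B$. Once these compatibilities are in place, both sides record the same weighted sum over the same collection of right $\mathfrak{o}$-ideals.
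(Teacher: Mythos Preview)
Your proposal is correct and follows essentially the same route as the paper: transport both sides to $B$ via $\Xi$, identify $\mathcal{Y}_{a_{\ell}}/\mathcal{J}$ with locally principal (hence invertible) right $\mathfrak{o}$-ideals of norm coprime to $\mathcal{P}$ whose left order has type $w_{\ell}$, verify $N(\nu(y)\mathbb{Z})=N(I_y)$, and then invoke Proposition~\ref{hat zeta functions} for the left-hand side. The compatibility you flag as the main obstacle---that the Latimer class $[\overline{\mathfrak{U}_i}]$ agrees with the adelic class $[u_i\mathfrak{o}]$---is treated only implicitly in the paper, so your explicit caution there is, if anything, more careful; one small adjustment is to say ``locally principal'' rather than ``integral'' right $\mathfrak{o}$-ideals when describing the image of $\mathcal{Y}/\mathcal{J}$, since $\mathfrak{o}$ need not be hereditary.
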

\begin{proof} In order to establish the equality, we first use the fact that the map $\Xi$ of Proposition \ref{Xi proposition} identifies $A^+(W)$ with $B$. Using this identification, we have that the map $\nu$ is the reduced norm of the quaternion algebra $B$ (see, for example, \cite[Section 7.3]{shimura2004arithmetic}). In particular, we may view the right-hand side of the claimed equality as a partial zeta series of $B$. Under this identification, we have that $\mathcal{Y}_p = \mathfrak{o}_p \cap B_p^{\times}$ for all $ p \not \in \mathcal{Q}$. Moreover, we have $\mathcal{J}_p = \mathfrak{o}_p^{\times}$ for all $ p$ and hence in particular $\mathcal{Y}_p = \mathfrak{o}_p^{\times}$ for all $p \in \mathcal{Q}$. Moreover the set $ \mathcal{Y}_{a_{\ell}}/\mathcal{J}$ can be identified with locally principal right ideals in the order $\mathfrak{o}$ with the property that their image with respect to the map $\rho$ is the order corresponding to $w_{\ell}$ and locally are equal to $\mathfrak{o}_p$ for all $ p \in \mathcal{Q}$. Indeed, for an element $y=(y_p) \in \mathcal{Y}$ we associate the locally principal ideal $I$ in $\mathfrak{o}$ such that $I_p = y_p \mathfrak{o}_p $ for all $ p \not \in \mathcal{Q}$, and $I_p = \mathfrak{o}_p$ for all $ p \in \mathcal{Q}$. By \cite[Main Theorem 16.7.7]{voight} such an ideal is invertible, and moreover every invertible (right) ideal in $\mathfrak{o}$ is locally principal. That is, the set $ \mathcal{Y}_{a_{\ell}}/\mathcal{J}$ may be identified with the set of invertible right ideals in $\mathfrak{o}$ with the property that they are locally trivial for $p \in \mathcal{Q}$, and their image with respect to the map $\rho$ is the order $w_{\ell}$. We now note that given an element $y=(y_p) \in \mathcal{Y}$ corresponding to an ideal $I$, we have that $N(\nu(y)\mathbb{Z}) = N(I)$. Indeed, by \cite[Main Theorem 16.7.7 (iv')]{voight} and Lemma \ref{relation of norms} above, we have that $N(I) = \textup{nrd}(I)$, where $\textup{nrd}(I)$ now equals $\textup{nrd}(I) = \textup{gcd}\{\nu(a) \,\,|\,\, a \in I\}$. Then, by \cite[Equation 16.3.4]{voight}, we have that 
\begin{equation*}
    \textup{nrd}(I) = \prod_p N(\nu(y_p)\mathbb{Z}) = N(\nu(y)\mathbb{Z}),
\end{equation*}
which concludes our claim. Finally, we also observe that the last equation gives that an invertible ideal $I$ of $\mathfrak{o}$ is locally trivial at a prime $p$ if and only if $\textup{gcd}(N(I),p) = 1$. Putting all these together, we have that
    \[
    \sum_{ \mathcal{Y}_{a_{\ell}}/\mathcal{J}} N(\nu(y)\mathbb{Z})^{-s}= \sum_{\rho(\overline{\mathfrak{U}_i})=w_{\ell}}\sum_{\substack{I \in [\overline{\mathfrak{U}_i}]\\(I,{\mathcal{Q}})=1 }} N(I)^{-s},
    \]
    where the first sum runs over a set of representatives of right ideals in $\mathfrak{o}$ which map to $w_{\ell}$ under the map $\rho$. Then Proposition \ref{hat zeta functions} allows us to conclude the proof.
    \end{proof}
    We now define the space of automorphic forms on $G^{+}(W)_{\mathbb{A}}$ of weight $k$ with repsect to $\mathcal{J}$ as in \cite[(22.4a), (22.4b)]{shimura2004arithmetic}. We denote this space by $\mathcal{S}_k(\mathcal{J})$. Moreover, if $\psi$ is a Hecke character of $\mathbb{Q}$, we define
    \begin{equation*}
        S_k(\mathcal{J}, \psi) = \left\{\mathbf{g} \in S_k(\mathcal{J}) \mid \mathbf{g}(cx) = \psi(c)^{-1}\mathbf{g}(x), \ \forall c \in \mathbb{Q}_{\mathbb{A}}^{\times}\right\},
    \end{equation*}
    as in \cite[(22.6)]{shimura2004arithmetic}. We also define the Hecke algebra $\mathfrak{R}(\mathcal{J}, \mathcal{Y})$ in the sense of \cite[Section 8.16]{shimura2004arithmetic} and its action on $S_k(\mathcal{J, \psi})$ as follows. For an element $\tau \in \mathcal{Y}$, we take a finite subset $Y$ of $G^+(W)_{\mathbf{h}}$, so that 
    \begin{equation}\label{decomposition hecke operator}
        \mathcal{J}\tau \mathcal{J} = \bigsqcup_{y \in Y} \mathcal{J}y.
    \end{equation}
    For $\mathbf{g} \in S_k(\mathcal{J}, \psi)$ and $x \in G^{+}(W)_{\mathbb{A}}$, we then set
    \begin{equation}\label{action of hecke operators}
        (\mathbf{g} \mid \mathcal{J}\tau\mathcal{J})(x) := \sum_{y \in Y} \mathbf{g}(xy^{-1}),
    \end{equation}
    and extend the action linearly to $\mathfrak{R}(\mathcal{J}, \mathcal{Y})$. Finally, for any $\mathbf{g} \in S_k(\mathcal{J}, \psi)$, which is a Hecke eigenform, we define the Dirichlet series
    \begin{equation*}
        \mathfrak{T}(s, \mathbf{g}) := \sum_{\tau \in \mathcal{J}\backslash \mathcal{Y}/\mathcal{J}}N(\nu(\tau)\mathbb{{Z}})^{-s}\mu(\tau),
    \end{equation*}
    where $\mu(\tau) \in \mathbb{C}$ is given by $\mathbf{g} \mid \mathcal{J}\tau \mathcal{J} = \mu(\tau)\mathbf{g}$. \\

    We now give the following Lemma, which is similar to \cite[(22.13)]{shimura2004arithmetic}, but here we consider left instead of right coset representatives. 
    
\begin{lemma}\label{L-function}

Let $\mathbf{g} \in S_0(\mathcal{J}, \mathbf{1})$, where $\mathbf{1}$ denotes the trivial character. We then have
\[
\mathfrak{T}(s,\mathbf{g}) \mathbf{g}(x) = \sum_{y \in \mathcal{Y}/ \mathcal{J}} N(\nu(y)\mathbb{Z})^{-s} \mathbf{g}(xy_{\mathbf{h}}), \,\, \forall x \in G^{+}(W)_{\mathbb{A}}.
\]
\end{lemma}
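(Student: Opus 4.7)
The plan is to start from the Hecke-eigenvalue definition of $\mathfrak{T}(s,\mathbf{g})$ and convert the left-coset formulation built into the Hecke action \eqref{action of hecke operators} into a right-coset sum, using the main involution of the quaternion algebra $B$ together with the trivial central character hypothesis. Expanding gives
\[
\mathfrak{T}(s,\mathbf{g})\mathbf{g}(x)=\sum_{\tau\in\mathcal{J}\backslash\mathcal{Y}/\mathcal{J}} N(\nu(\tau)\mathbb{Z})^{-s}\mu(\tau)\mathbf{g}(x)=\sum_{\tau} N(\nu(\tau)\mathbb{Z})^{-s} \sum_{y\in Y_\tau}\mathbf{g}(xy^{-1}),
\]
where $\mathcal{J}\tau\mathcal{J}=\bigsqcup_{y\in Y_\tau}\mathcal{J}y$ is the decomposition of \eqref{decomposition hecke operator}.

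The crux is to rewrite each $\mathbf{g}(xy^{-1})$ as $\mathbf{g}(x\bar{y})$, where $\bar{\,\cdot\,}$ is the main involution of $B$. Via the isomorphism $\Xi$ of Proposition \ref{Xi proposition} we identify $G^+(W)_{\mathbb{A}}$ with $B^{\times}_{\mathbb{A}}$ and $\nu$ with the reduced norm. For any $y\in B^{\times}$ one has $y^{-1}=\nu(y)^{-1}\bar{y}$, with $\nu(y)^{-1}\in\mathbb{Q}_{\mathbb{A}}^{\times}$ lying in the center of $B^{\times}_{\mathbb{A}}$; since $\mathbf{g}$ has trivial central character, $\mathbf{g}(x\nu(y)^{-1}\bar{y})=\mathbf{g}(\nu(y)^{-1}\cdot x\bar{y})=\mathbf{g}(x\bar{y})$. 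Applying $\bar{\,\cdot\,}$ to the decomposition of $\mathcal{J}\tau\mathcal{J}$ and using $\overline{\mathcal{J}}=\mathcal{J}$ (the order $\mathfrak{o}$ is stable under the involution locally at every prime, including those in $\mathcal{P}$) yields $\mathcal{J}\bar{\tau}\mathcal{J}=\bigsqcup_{y\in Y_\tau}\bar{y}\mathcal{J}$, so the $\bar{y}$'s are a full set of right coset representatives for $\mathcal{J}$ in $\mathcal{J}\bar{\tau}\mathcal{J}$.

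To finish, I would re-index via $\sigma:=\bar{\tau}$. Since $\overline{\mathcal{Y}}=\mathcal{Y}$ locally at every prime, $\tau\mapsto\bar{\tau}$ is a bijection of $\mathcal{J}\backslash\mathcal{Y}/\mathcal{J}$, and $\nu(\bar{\tau})=\nu(\tau)$ preserves $N(\nu(\cdot)\mathbb{Z})$. Substituting, using the partition $\mathcal{Y}/\mathcal{J}=\bigsqcup_{\sigma}(\mathcal{J}\sigma\mathcal{J})/\mathcal{J}$, and invoking right-$\mathcal{J}_{\infty}=G^+(W)_{\infty}$-invariance of $\mathbf{g}$ to replace $\bar{y}$ by $\bar{y}_{\mathbf{h}}$, one assembles the expression into $\sum_{y\in\mathcal{Y}/\mathcal{J}}N(\nu(y)\mathbb{Z})^{-s}\mathbf{g}(xy_{\mathbf{h}})$, as required. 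The main technical obstacle is the middle step: one must carefully identify $\nu(y)^{-1}$ as an element of $\mathbb{Q}_{\mathbb{A}}^{\times}$ sitting inside $G^+(W)_{\mathbb{A}}$ so that the trivial central character kills it, and check the stability $\overline{\mathcal{J}}=\mathcal{J}$, $\overline{\mathcal{Y}}=\mathcal{Y}$ at every prime; once these are in place, the remainder is direct re-indexing.
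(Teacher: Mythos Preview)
Your proposal is correct and follows essentially the same route as the paper's proof. Both arguments hinge on exactly the three points you identify: the relation $y^{-1}=\nu(y)^{-1}\cdot(\text{involution of }y)$, the trivial central character killing the $\nu(y)^{-1}$ factor, and the stability $\mathcal{J}^*=\mathcal{J}$, $\mathcal{Y}^*=\mathcal{Y}$ allowing one to re-index over double cosets; the paper uses the Clifford-algebra involution $*$ where you use the quaternion main involution $\bar{\,\cdot\,}$, but these coincide under $\Xi$. The only organisational difference is that the paper also invokes the fact (from Shimura) that one may choose a common set $Y_\tau$ of left and right coset representatives for $\mathcal{J}\tau\mathcal{J}$, whereas your version bypasses this by applying the anti-involution directly to the left-coset decomposition to obtain a right-coset decomposition for $\mathcal{J}\bar{\tau}\mathcal{J}$---a slightly more economical bookkeeping that avoids the extra citation.
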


\begin{proof}

First, as in \cite[p. 208]{shimura2004arithmetic}, for each $\tau \in \mathcal{Y}$, we may find a set $Y_{\tau}$ such that
\[
\mathcal{J}\tau \mathcal{J} = \bigsqcup_{y \in Y_{\tau}} \mathcal{J} y = \bigsqcup_{y \in Y_{\tau}} y \mathcal{J}, 
\]
and so $\mathcal{J}\tau^* \mathcal{J} = \bigsqcup_{y \in Y_{\tau}} \mathcal{J} y^*  = \bigsqcup_{y \in Y_{\tau}}  \mathcal{J}\nu(y) y^{-1}$, where we have used the fact that $\mathcal{J}^* = \mathcal{J}$. Indeed, in our case the set $\mathcal{J}$ can be identified with the set $\prod_p \mathfrak{o}_p^{\times}$.

Using now the definition of the action of \eqref{action of hecke operators}, we have 
\begin{equation*}
   (\mathbf{g} \mid \mathcal{J} \tau^* \mathcal{J})(x) = \sum_{y \in Y_{\tau}} \mathbf{g}(x \nu(y)^{-1}y) = \sum_{y \in Y_{\tau}}\mathbf{g}(xy), 
\end{equation*}
because $\mathbf{g}$ is of trivial character. 

Now we use the fact that $\mathcal{Y}^* = \mathcal{Y}$ since in our case $\mathcal{Y}_p = \mathfrak{o}_p$ for $p \not \in \mathcal{Q}$, and $\mathfrak{o}_p^{\times}$ otherwise. But then we have
that 
\begin{multline*}
    \mathfrak{T}(s,\mathbf{g}) \mathbf{g}(x) = \sum_{\tau \in \mathcal{J}\backslash \mathcal{Y}/\mathcal{J}}N(\nu(\tau)\mathbb{{Z}})^{-s}\mu(\tau)\mathbf{g}(x) = \sum_{\tau \in \mathcal{J}\backslash \mathcal{Y}/\mathcal{J}}N(\nu(\tau^{*})\mathbb{{Z}})^{-s}\mu(\tau^{*})\mathbf{g}(x) = \\=\sum_{\tau \in \mathcal{J}\backslash \mathcal{Y}/\mathcal{J}}N(\nu(\tau)\mathbb{{Z}})^{-s}(\mathbf{g} \mid \mathcal{J}\tau^{*}\mathcal{J})(x) = \sum_{y \in \mathcal{Y}/ \mathcal{J}} N(\nu(y)\mathbb{Z})^{-s} \mathbf{g}(xy_{\mathbf{h}}),
\end{multline*}
because $\mathcal{J}\tau \mathcal{J} = \bigsqcup_{y \in Y_{\tau}} y \mathcal{J}$.
\end{proof}

    We now prove the following auxiliary Lemma.
    
    \begin{lemma}\label{real eigenvalues}
    For an eigenform $\mathbf{g} \in S_0(\mathcal{J},\mathbf{1})$, we have
    \[
    L(\mathbf{g},s) = L(\bar{\mathbf{g}},s).
    \]
\end{lemma}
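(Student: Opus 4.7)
The plan is to show that the Hecke eigenvalues $\mu(\tau)$ of $\mathbf{g}$ are real; this immediately gives the claim, since the standard $L$-function is an Euler product whose local factors at unramified primes are polynomials in the $\mu(\tau)$, and $\bar{\mathbf{g}}$ is a priori a Hecke eigenform with eigenvalues $\overline{\mu(\tau)}$, so reality forces the Euler factors to coincide.

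First, I would equip $S_0(\mathcal{J},\mathbf{1})$ with its natural Hermitian pairing. Through the identification (via $\Xi$) with $\mathbb{C}$-valued functions on the finite double coset space $B^{\times}\backslash B^{\times}_{\mathbb{A}}/X$ from Section \ref{quaternion algebras}, this is
\[
\langle \mathbf{g}_1, \mathbf{g}_2 \rangle := \sum_{i=1}^{h_1} \frac{\mathbf{g}_1(u_i)\overline{\mathbf{g}_2(u_i)}}{e_i}, \qquad e_i := \#\bigl(B^{\times} \cap u_i X u_i^{-1}\bigr).
\]
Next I would verify that for $\tau \in \mathcal{Y}$ the operator $[\mathcal{J}\tau\mathcal{J}]$ is self-adjoint with respect to this pairing. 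The ingredients are that $\mathcal{J}^{*} = \mathcal{J}$ (because $\mathcal{J}_p = \mathfrak{o}_p^{\times}$ is invariant under the main involution of $B$), that $\mathcal{Y}^{*} = \mathcal{Y}$ for the same local reason, and that $\nu(\tau) = \tau\tau^{*} \in \mathbb{Q}_{\mathbf{h}}^{\times}$ is central and acts trivially on $\mathbf{g}$ since the character is trivial. A double-coset manipulation then matches $[\mathcal{J}\tau\mathcal{J}]$ with its formal adjoint $[\mathcal{J}\tau^{*}\mathcal{J}]$, and the centrality of $\nu(\tau)$ identifies the two operators.

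Second, I would observe that $\bar{\mathbf{g}} \in S_0(\mathcal{J}, \mathbf{1})$ (the trivial character condition is preserved under complex conjugation), and that complex conjugation commutes with the Hecke action: from \eqref{action of hecke operators} the operator $[\mathcal{J}\tau\mathcal{J}]$ acts by the finite sum $\sum_y \mathbf{h}(xy^{-1})$ with no scalar coefficients, so conjugating the eigen-equation $\mathbf{g}\,|\,\mathcal{J}\tau\mathcal{J} = \mu(\tau)\mathbf{g}$ yields $\bar{\mathbf{g}}\,|\,\mathcal{J}\tau\mathcal{J} = \overline{\mu(\tau)}\,\bar{\mathbf{g}}$. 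Combined with the self-adjointness of the previous paragraph, this forces $\mu(\tau) \in \mathbb{R}$, so $\bar{\mathbf{g}}$ is a Hecke eigenform with the same eigenvalues as $\mathbf{g}$, and the equality $L(\mathbf{g},s) = L(\bar{\mathbf{g}},s)$ follows at every unramified Euler factor.

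The main obstacle is the self-adjointness of the Hecke operators with respect to the natural pairing. While this is a standard fact in the theory of Brandt matrices for quaternionic orders, carrying it out rigorously in Shimura's adelic framework requires carefully tracking the role of $\nu$, the coset decomposition \eqref{decomposition hecke operator}, and the measure normalizations implicit in the weights $e_i$.
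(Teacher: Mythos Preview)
Your proposal is correct and follows essentially the same approach as the paper: self-adjointness of the Hecke operators (which the paper simply cites from Shimura \cite[(22.11a)]{shimura2004arithmetic} rather than sketching) gives real eigenvalues, then conjugating the eigen-equation shows $\bar{\mathbf{g}}$ has the same system of eigenvalues, hence the same $L$-function. The only difference is that you outline the self-adjointness argument explicitly via the involution $*$ and the centrality of $\nu(\tau)$, whereas the paper delegates this to the reference.
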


\begin{proof}First we note that the eigenvalues of $\mathbf{g}$ are real. This follows from the fact that the Hecke operators are self-adjoint with respect to the corresponding inner product (see, for example, \cite[(22.11a)]{shimura2004arithmetic}, with $\psi$ trivial there). Hence, if we write $\mu(\tau) \in \mathbb{R}$ for the Hecke eigenvalue of $g$ with respect to the Hecke operator $\mathcal{J} \tau \mathcal{J}$, with $\tau \in \mathcal{Y}$, and consider a decomposition as in \eqref{decomposition hecke operator}, we have
\[
\left(\overline{\mathbf{g}} | \mathcal{J} \tau \mathcal{J}\right)(x)  = \sum_{y \in Y} \overline{\mathbf{g} (xy^{-1})} = \overline{\left(\mathbf{g} | \mathcal{J} \tau \mathcal{J}\right)(x) } = \overline{\mu(\tau)\mathbf{g}(x)}= \mu(\tau) \overline{\mathbf{g}}(x), 
\]
That is, $\mathbf{g}$ and $\overline{\mathbf{g}}$ have the same system of eigenvalues. But the $L$-function is attached to the system of eigenvalues, and hence they are the same. \qedhere
\end{proof}
    We are now ready to prove the following Theorem. 
    \begin{theorem}\label{sub main theorem}
        Assume $K=\mathbb{Q}(\sqrt{-m})$ with $m$ square free and $m \not \equiv 2 \pmod{4}$. Choose $\ell$ so that it satisfies the conditions of Proposition \ref{stabilisers of lattices}. For each $j=1, \cdots, h_2$, we have
        \begin{equation}\label{sub main equation}
            \sum_{i=1}^{h_1}f_{j}(\tau(\zeta_i))\widehat{\zeta}_{\mathcal{Q}}(\xi_i;s-k+3) = f_j(1) L_{\mathcal{Q}}\left(f_j;s-k+3\right).
        \end{equation}
    \end{theorem}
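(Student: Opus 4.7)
The strategy is to pull back $f_j$ to an automorphic form $\mathbf{g}_j$ on the Clifford group $G^+(W)_{\mathbb{A}} \cong B^{\times}_{\mathbb{A}}$, use Proposition \ref{Equality of partial zeta} and Lemma \ref{decomposition} to rewrite the left-hand side as a Dirichlet series of the form $\sum_{y \in \mathcal{Y}/\mathcal{J}} N(\nu(y)\mathbb{Z})^{-s} \mathbf{g}_j(y)$, apply Lemma \ref{L-function} to recognise this as $\mathbf{g}_j(1)\,\mathfrak{T}(s-k+3, \mathbf{g}_j)$, and finally invoke Shimura's identification of $\mathfrak{T}(s, \mathbf{g}_j)$ with the standard $L$-function of $f_j$.

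\textit{Stage 1.} First I absorb $\zeta_{K, \mathcal{P}}(s-k+3)$ into the sum using \eqref{zeta hat} to rewrite the left-hand side as $\sum_{i=1}^{h_1} f_j(\tau(\zeta_i))\,\widehat{\zeta}_{\xi_i, \mathcal{P}}(s-k+3)$. By the commutative diagram of Proposition \ref{diagram quaternion algebra}, $\tau(\zeta_i)$ lies in the class of $v_\ell$ (the representative for $\widetilde{H}(\xi)_{\mathbb{Q}}\backslash \widetilde{H}(\xi)_{\mathbb{A}}/(\widetilde{H}(\xi)_{\mathbb{A}}\cap \widetilde{C})$ corresponding to $w_\ell$) exactly when $\rho(u_i) = w_\ell$; since $f_j$ is a function on this coset space, $f_j(\tau(\zeta_i)) = f_j(v_\ell)$. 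Grouping the sum accordingly and invoking Proposition \ref{Equality of partial zeta} yields
\[
\sum_{\ell=1}^{h_2} f_j(v_\ell) \sum_{y \in \mathcal{Y}_{a_\ell}/\mathcal{J}} N(\nu(y)\mathbb{Z})^{-(s-k+3)}.
\]

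\textit{Stage 2.} Next I set $\mathbf{g}_j := f_j \circ \tau$ on $G^+(W)_{\mathbb{A}}$. Since $\tau$ is surjective with kernel $\mathbb{Q}^{\times}$ and sends $\mathcal{J}$ into $H(\xi)_{\mathbb{A}} \cap C$, the function $\mathbf{g}_j$ is left $G^+(W)_{\mathbb{Q}}$-invariant, right $\mathcal{J}$-invariant, and has trivial central character; moreover $H(\xi)_{\infty}$ is compact (because $W$ is negative definite), so $\mathbf{g}_j$ is independent of the archimedean variable. Hence $\mathbf{g}_j \in S_0(\mathcal{J}, \mathbf{1})$, and by construction $\mathbf{g}_j(a_\ell) = f_j(v_\ell)$ and $\mathbf{g}_j(y) = \mathbf{g}_j(a_\ell)$ for every $y \in \mathcal{Y}_{a_\ell}$. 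Using the disjoint decomposition $\mathcal{Y}/\mathcal{J} = \bigsqcup_{\ell=1}^{h_2} \mathcal{Y}_{a_\ell}/\mathcal{J}$ from Lemma \ref{decomposition}, the display above becomes $\sum_{y \in \mathcal{Y}/\mathcal{J}} N(\nu(y)\mathbb{Z})^{-(s-k+3)} \mathbf{g}_j(y)$. Evaluating Lemma \ref{L-function} at $x = 1$ (and using $\mathbf{g}_j(y_{\mathbf{h}}) = \mathbf{g}_j(y)$ by archimedean invariance), this equals $\mathfrak{T}(s-k+3, \mathbf{g}_j)\,\mathbf{g}_j(1)$, and $\mathbf{g}_j(1) = f_j(\tau(1)) = f_j(1)$.

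\textit{Stage 3.} It remains to identify $\mathfrak{T}(s, \mathbf{g}_j) = L_{\mathcal{P}}(f_j; s)$. The isogeny $\tau: G^+(W) \to H(\xi)$ induces a compatibility between the Hecke algebra $\mathfrak{R}(\mathcal{J}, \mathcal{Y})$ and the Hecke algebra for the pairs $(\widetilde{H}(\xi)_p, \widetilde{H}(\xi)_p \cap \widetilde{C})$ used to define the standard $L$-function of $f_j$; thus $\mathbf{g}_j$ shares the Hecke eigenvalues of $f_j$ at every prime $p \notin \mathcal{P}$. Shimura's results in \cite[Section 22.11]{shimura2004arithmetic}, applied to our definite quaternion algebra $B$, then express the partial zeta $\mathfrak{T}(s, \mathbf{g}_j)$ as the standard Euler product attached to the Hecke eigenvalues, which by the above coincides with $L_{\mathcal{P}}(f_j; s)$ in Shimura's normalisation (as was already used in passing from $s-k+5/2$ to $s-k+3$ in \eqref{what we have from Dirichlet}). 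Chaining the three stages together yields the claimed equality \eqref{sub main equation}.

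\textit{Main obstacle.} The delicate point is Stage 3: matching the Hecke eigenvalues of $\mathbf{g}_j$ with those of $f_j$ through the isogeny $\tau$, and aligning the normalisations of the resulting Euler products with Shimura's standard $L$-function for $\textup{SO}(3)$-type forms. The set $\mathcal{P}$ must be enlarged to absorb all primes where the local Hecke correspondence or the identification of orders in $B$ could degenerate (primes dividing $2$, $\ell$, the discriminant of $K$, and the ramification primes of $B$); once this is arranged, the identification becomes a direct consequence of Shimura's formulas.
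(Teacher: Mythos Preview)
Your proposal is correct and follows essentially the same approach as the paper: regroup the sum over $i$ by the $H(\xi)$-class of $\tau(\zeta_i)$, apply Proposition \ref{Equality of partial zeta}, set $g_j = f_j \circ \tau$, use Lemma \ref{decomposition} together with Lemma \ref{L-function} at $x=1$, and then identify $\mathfrak{T}(s,g_j)$ with $L_{\mathcal{P}}(f_j;s)$. The paper handles your ``main obstacle'' in Stage 3 by two precise citations to Shimura's book, namely \cite[(22.12b)]{shimura2004arithmetic} for $\mathfrak{T}(s,g_j)=L(g_j,s)$ and \cite[Remark (I), p.~209]{shimura2004arithmetic} for $L(g_j,s)=L(f_j,s)$, which sharpens your appeal to ``Section 22.11'' but is otherwise the same input.
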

    \begin{proof}
        We remind the reader that we have chosen $\{w_l\}_{l=1}^{h_2}$ as a set of representatives for $B^{\times }\backslash B_{\mathbb{A}}^{\times}/ N_B(X)$, and these correspond to a set of representatives $\{v_l\}_{l=1}^{h_2}$ for $H(\xi)_{\mathbb{Q}}\backslash H(\xi)_{\mathbb{A}}/(H(\xi)_{\mathbb{A}} \cap C)$ under the diagram of Proposition \ref{diagram quaternion algebra}. Since the map $\tau$ is surjective, and using Lemma \ref{zeta hat class number one}, we can rewrite the above relation as follows:
        \begin{equation*}
           \sum_{i=1}^{h_1}f_{j}(\tau(\zeta_i))\widehat{\zeta}_{\mathcal{Q}}(\xi_i;s-k+3) = \sum_{l=1}^{h_2} f_j(v_l) \sum_{\substack{i \textup{ such that }\\ \tau(\zeta_i) = v_l}} \widehat{\zeta}_{\mathcal{Q}}(\xi_i;s-k+3).
        \end{equation*}
        We will now identify the above in the quaternion algebra $B$. If $\{u_i\}_{i=1}^{h_1}$ is a set of representatives for $B^{\times }\backslash B_{\mathbb{A}}^{\times}/X$ corresponding to the $\zeta_{i}$'s, we have from the commutative diagram of Proposition \ref{diagram quaternion algebra} that the relation $\tau(\zeta_i) = v_l$ translates to $\rho(u_i) = w_l$. \\
        
     Recall we have fixed a set $\mathcal{A} \subset G^{+}(W)_{\mathbb{Q}}\backslash G^{+}(W)_{\mathbb{A}}/(G^{+}(W)_{\mathbb{A}} \cap J)$ of pre-images for the elements $\{v_l\}_{l=1}^{h_2}$ under $\tau$ and called $\{a_l\}_{l=1}^{h_2}$ its elements. 
     If for each $j=1, \cdots, h_2$, we set $g_j:= f_j \circ \tau$, we have that these are eigenforms for the corresponding Hecke algebras. Moreover, from \cite[Remark (I), p. 209]{shimura2004arithmetic}, $L(f_j, s) = L(g_j, s)$. Hence, if we abuse notation and write $\{a_l\}_{l=1}^{h_2}$ for the corresponding elements in $B^{\times }\backslash B_{\mathbb{A}}^{\times}/X$, we have that it suffices to show
    \begin{equation}\label{what we need to show}                   
        \sum_{\ell=1}^{h_2}f_j(\tau(a_{\ell}))\left(\sum_{\substack{i \textup{ such that } \\\rho(u_i)=w_{\ell}}}\widehat{\zeta}_{\mathcal{Q}}(\xi_i;s-k+3)\right) = f_j(1) L_{\mathcal{Q}}(g_j;s-k+3).
    \end{equation}
We now use Lemma \ref{L-function} with $\mathbf{g} = g_j = f_j \circ \tau$ and set $x=1 \in G^{+}(W)_{\mathbb{A}}$. We then have

\[
\mathfrak{T}(s,g_j) g_j(1) = \sum_{y \in \mathcal{Y}/ \mathcal{J}} N(\nu(y)\mathbb{Z})^{-s} g_j(y_{\mathbf{h}}). 
\]
Now, because $\tau$ is a homomorphism, we have $g_j(1) = f_j(\tau(1)) = f_j(1)$. From \eqref{y mod j decomposition} in Lemma \ref{decomposition}, we can rewrite the above as
\[
\mathfrak{T}(s,g_j) f_j(1) = \sum_{\ell=1}^{h_2} \sum_{y \in \mathcal{Y}_{a_{\ell}}/ \mathcal{J}} N(\nu(y)\mathbb{Z})^{-s}g_j(y_\mathbf{h}) = \sum_{\ell=1}^{h_2} f_j (\tau(a_{\ell}))\sum_{y \in \mathcal{Y}_{a_{\ell}}/ \mathcal{J}} N(\nu(y)\mathbb{Z})^{-s},
\]
because if $y \in \mathcal{Y}_{a_l}$, then $\tau(y) \in H(\xi)\tau(a_l)(H(\xi)_{\mathbb{A}}\cap C)$ and hence $f_j(\tau(y_{\mathbf{h}})) = f_{j}(\tau(a_l))$.\\

It is now true from \cite[(22.12b)]{shimura2004arithmetic} that $\mathfrak{T}(s,g_j)=L(g_j,s)$. Proposition \ref{Equality of partial zeta} now gives that 
\[
    \sum_{\substack{i \textup{ such that } \\\rho(u_i)=w_{\ell}}}\widehat{\zeta}_{\mathcal{Q}}(\xi_i;s)=\sum_{ y\in \mathcal{Y}_{a_{\ell}}/\mathcal{J}} N(\nu(y)\mathbb{Z})^{-s},
\]
and hence \eqref{what we need to show} follows.
\end{proof}

We can finally state and prove the Main Theorem of this paper.

\begin{theorem}\label{main theorem}

Assume $K=\mathbb{Q}(\sqrt{-m})$ with $m$ square free and $m \not \equiv 2 \pmod{4}$.
Choose $\ell$ so that it satisfies the conditions of Proposition \ref{stabilisers of lattices}. 
Then we have,
\[
\zeta_{\mathcal{Q}}(2s-2k+4)D_{\mathcal{Q}}(\bm{F},\bm{P};s) = A(\xi)L_{\mathcal{Q}}(F;s-k+2).
\]

\end{theorem}
\begin{proof} 
By substituting \eqref{sub main equation} from Theorem \ref{sub main theorem} in \eqref{what we have from Dirichlet}, using also Lemma \ref{real eigenvalues}, we obtain
\begin{equation*}
   \zeta_{\mathcal{Q}}(2s-2k+4)D_{\mathcal{Q}}(\bm{F},\bm{P}; s) = C \times L_{\mathcal{Q}}(F;s-k+2),
\end{equation*}
where $C:=\sum_{j=1}^{h_2}A_{f_j} f_j(1)$. We only need to show $C = A(\xi)$. To show this, we set $\xi_i = \xi$ in \ref{expression}. From the definition of $\lambda$ in Proposition \ref{commutative diagram 1}, we have that the $\Gamma(L_0)$-class of $\xi$ corresponds to the trivial class in $\widetilde{H}(\xi)_{\mathbb{Q}}\backslash \widetilde{H}(\xi)_{\mathbb{A}}/(\widetilde{H}(\xi)_{\mathbb{A}}\cap \widetilde{C})$. After clearing denominators in \ref{expression}, and considering the coefficient $1^{-s}$ in the series expansion, we obtain $C = A(\xi)$, and hence the Theorem follows.

\end{proof}
\begin{remark}
    The fact that the constant $C$ equals $A(\xi)$ can also be deduced from the fact that $\{f_j\}_{j=1}^{h_2}$ is an orthonormal basis for $V(\xi)$.
\end{remark}

We now write $\mathcal{V}_k \subset S_{k}(\widehat{\Gamma})$ for the span of $\mathcal{P}_{k,D,0}$ for all valid choices of $D$, as in the Theorem above. We write $\bm{V}_k \subset S_k(\widehat{D}_f)$ for the corresponding space of automorphic forms. We then obtain the following Corollary.
\begin{corollary}\label{Theorem for G}
     Let $\bm{G} \in \bm{V}_k$. Then, for a finite set of primes $\mathcal{Q}$ sufficiently large we have that
     \[
     \zeta_{\mathcal{Q}}(2s-2k+4)D_{\mathcal{Q}}(\bm{F}, \bm{G};s) =\widetilde{C} \,L_{\mathcal{Q}}(F;s-k+2),
     \]
     where $\widetilde{C} \in \mathbb{C}$ is given as
     \[
     \widetilde{C} = (\# \textup{Cl}(K/\mathbb{Q}))
     \sum_{\mf{m} \in \textup{Cl}(K)/\textup{Cl}(K/\mathbb{Q})} \langle \phi_{\mf{m},1},\psi_{\mf{m},1}\rangle.
     \]
     Here $\phi_{\mf{m},1}$ and $\psi_{\mf{m},1}$ are the first Fourier-Jacobi coefficients of $F_{\mf{m}}$ and $G_{\mf{m}}$, respectively. 
\end{corollary}
\begin{proof}
    From the Theorem above, it follows that $\zeta_{\mathcal{Q}}(2s-2k+4)D_{\mathcal{Q}}(\bm{F}, \bm{G};s)$ is equal to $L_{\mathcal{Q}}(F;s-k+2)$ up to a constant, when we take $\mathcal{Q}$ large enough to apply our Theorem for each Poincar\'e series in the linear combination that makes up $G$. This constant is equal to $\widetilde{C}$, by comparing the coefficients of $1^{-s}$ in the expansion of the Dirichlet series.
\end{proof}

\renewcommand{\abstractname}{Acknowledgements}
\begin{abstract}
Part of this work was completed during the second-named author's PhD studies. This author was supported by the HIMR/UKRI "Additional Programme for Mathematical Sciences", grant number EP/V521917/1, as well as by a scholarship from the Onassis Foundation.
\end{abstract}

\printbibliography
\end{document}